\documentclass{article}

\usepackage[preprint]{neurips_2026}

\usepackage{tikz}
\usetikzlibrary{decorations.pathreplacing}
\usepackage{threeparttable}
\usepackage{amsthm}
\usepackage{amsmath}
\usepackage{amssymb}
\usepackage{mathrsfs}
\usepackage[utf8]{inputenc} 
\usepackage[T1]{fontenc}    
\usepackage{hyperref}       
\usepackage{url}            
\usepackage{booktabs}       
\usepackage{amsfonts}       
\usepackage{nicefrac}       
\usepackage{microtype}      
\usepackage{xcolor}         

\newcommand{\R}{\mathbb{R}}

\newcommand{\E}{\mathbb{E}}

\newcommand{\dd}{\text{d}}

\newtheorem{thm}{Theorem}[section]
\newtheorem{defn}[thm]{Definition}
\newtheorem{lem}[thm]{Lemma}
\newtheorem{prop}[thm]{Proposition}
\newtheorem{cor}[thm]{Corollary}
\newtheorem{rem}[thm]{Remark}

\newtheorem{assumption}[thm]{Assumption}

\title{Enhanced convergence guarantees of score-based generative models in $\mathcal{W}_2$-distance beyond log-concavity%
\thanks{Accepted at the Fortieth Annual Conference on Neural Information Processing Systems (NeurIPS 2026).}}

\author{%
  Xiaoyan Zhang
  \\
  Central South University\\
  \texttt{x.y.zhang@csu.edu.cn} \\
   \And
   Chenxu Pang \\
   Central South University\\
   \texttt{c.x.pang@csu.edu.cn} \\
   \And
   Xiaojie Wang\thanks{Corresponding author}  \\
   Central South University\\
   \texttt{x.j.wang7@csu.edu.cn} \\
}

\begin{document}

\maketitle

\begin{abstract}
%
Although score-based generative models (SGMs) have achieved remarkable success in real-world sample generation tasks, it is still far from sufficient to
understand them from a mathematical perspective.
In this paper, we aim to provide enhanced convergence guarantees for two existing SGMs in $\mathcal{W}_2$-distance beyond log-concavity. More precisely, we develop a novel framework of {\color{black}discretization} error analysis for
Euler-Maruyama (EM) and Poisson midpoint (PM) time discretization schemes for SGMs. Under a non-log-concavity condition, we show that $\tilde{\mathcal{O}}(\sqrt{d}/\epsilon)$ iterations suffice to approximate the target distributions in $\epsilon$-accuracy for the classical EM-based SGM, considerably improving upon the existing iteration complexity $\tilde{\mathcal{O}}(d/\epsilon^2)$. Notably, the novel framework of error analysis also enables us to establish an $\tilde{\mathcal{O}}(\sqrt{d}/\epsilon^{2/3})$ iteration complexity for the PM-based SGM in $\epsilon$-accuracy, significantly outperforming current state-of-the-art Wasserstein convergence guarantees for SDE-based diffusion samplers. 
\end{abstract}

\section{Introduction}
Diffusion models such as denoising diffusion probabilistic models (DDPMs) \cite{ho2020denoising} have achieved state-of-the-art
performance in data generation across a wide range of applications, including image generation \cite{dhariwal2021diffusion,ho2020denoising,songs2021Denoising}, natural language processing \cite{popov2021grad,li2022diffusion}, inverse problems \cite{chung2023diffusion,cardoso2024monte} and many other areas.
In many cases, they are shown to outperform other
generative models such as generative adversarial networks (GANs)
\cite{Goodfellow2014generative}, variational autoencoders (VAEs)
\cite{cardoso2014Auto}, and normalizing flows
\cite{rezende2015variational}.

%
%
Through
stochastic differential equations (SDEs), Song et al. \cite{songs2020core} originally
formulated a general theoretical framework for diffusion models, referred to as score-based generative models (SGMs),
%
%
consisting of two processes: the forward process and the reverse process.
%
Via the density flow \( \{ p_t\}_{t \geq 0}\), the forward process gradually (as $t \to \infty$) transforms the target distribution \( p_0=\mu\) into a tractable prior distribution \(p_{\infty}\), typically the standard Gaussian law.
This step is commonly referred to as the diffusion process. 
Along these diffusion trajectories, the gradient of the log-density function (i.e., \(\nabla\log p_t\)), known as the score function and generally not available in a closed form, is estimated using score matching techniques, such as denoising score matching \cite{Vincent2011Aconnection} or sliced score matching \cite{song2020sliced}.
%
%
By leveraging the learned score function \(s_{\theta^\star}\), one can rely on a time-discretization scheme (e.g., the classical Euler-Maruyama, EM) for the reverse process to convert random noise back into new samples that approximately follow the target distribution $\mu$.
Despite the remarkable empirical success of SGMs across various applications, their
theoretical understanding is still far from enough and the present paper attempts to provide
enhanced convergence guarantees of SGMs in $\mathcal{W}_2$-distance beyond log-concavity.
%


\subsection{Related works on convergence guarantees of SGMs}
{\color{black}

In recent years, a growing body of research has focused on the theoretical understanding of SDE-based SGMs, see, e.g., 
\cite{debortoli2022convergence,chen2023improved,chen2023sampling,lee2023convergence,kwon2022score,jiao2025optimal,li2024improved,li2024accelerating,liu2022let,lee2022convergence,ligen2025d/t,zhang2025sublinear}.
%
In early works
\cite{debortoli2022convergence,liu2022let},  convergence guarantees often suffer from exponential dependencies on parameters such as dimension and smoothness constant. 
This was overcome by \cite{chen2023sampling}, where a $\tilde{\mathcal{O}}(d/\epsilon^2)$ iteration complexity was obtained in total variation (TV) distance under the  spatial Lipschitz condition on the score functions $\nabla \log p_t(x)$ (with respect to $x$).  
Further, the authors of \cite{chen2023improved} established $\tilde{\mathcal{O}}(d/\epsilon^2)$ 
and $\tilde{\mathcal{O}}(d^2/\epsilon^2)$  iteration complexity bounds in KL divergence
under the spatial Lipschitz condition on $\nabla \log p_t(x)$ and $\nabla \log p_0(x)$,
respectively.
These error bounds scale polynomially and even have a logarithmic dependence on relevant problem parameters, significantly improving prior works.
%
More recently in \cite{ligen2025d/t}, the authors showed  $\tilde{\mathcal{O}}(d/\epsilon)$  iterations for the SDE-based sampler suffice to achieve a given precision level $\epsilon$ in TV distance, under the assumption that the target distribution has finite moments.
%

During the preparation of the present manuscript, 
we were aware of two interesting papers \cite{jiao2025optimal,zhang2025sublinear} on convergence guarantees of SDE-based samplers.
More accurately, the authors of \cite{jiao2025optimal} obtained an improved iteration complexity of order $\tilde{\mathcal{O}}(\sqrt{d}/\epsilon)$ in TV distance and KL divergence under the assumption that the score function is spatially Lipschitz. 
Also, the authors of \cite{zhang2025sublinear} proposed a denoising diffusion randomized midpoint (DDRaM) method for the time discretization of the reverse process and derived a $\tilde{\mathcal{O}}(\sqrt{d}/\epsilon)$ iteration complexity in KL divergence, under the condition that the estimated score function is spatial Lipschitz (not uniform-in-time).
Besides, we would like to mention a series of recent works \cite{li2024improved,li2024accelerating,li2024sharp,chen2023probability,Huang2025Convergence},  providing theoretical error bounds in TV distance for ODE-based diffusion samplers.

\textbf{Convergence in Wasserstein distance with and without log-concavity.}
Note that the above-mentioned convergence guarantees are all obtained in TV distance and KL divergence. In contrast, convergence guarantees in Wasserstein distance are less studied in the literature.
To analyze the error in $\mathcal{W}_2$-distance, previous works often
imposed the log-concavity condition on the target distribution (see, e.g., \cite{bruno2025diffusion,Gao2025Wasserstein,gao2025convergence,strasman2025analysis,yu2025advancing} and references therein), which can be also utilized to ensure the above frequently used Lipschitz condition on the score function.
}

%
%
%
%
Nevertheless, the log-concavity condition is too restrictive and precludes complex and multi-modal real-world target distributions.
To remedy it, some researchers try to analyze the error in $\mathcal{W}_2$- distance under certain non-log-concavity conditions such as the weak log-concavity \cite{Silveri2025beyond,kremling2025non} and the log-Sobolev inequality (LSI) condition \cite{lee2022convergence}.
{\color{black}
The authors of \cite{bruno2025wasserstein} established convergence guarantees under semi log-concavity and strong log-concavity at infinity allowing for potentially discontinuous gradients, obtaining iteration complexities of $\tilde{\mathcal{O}}(d/\epsilon^2)$ in the general setting and $\tilde{\mathcal{O}}(d/\epsilon)$ under additional regularity assumptions on the score approximation in $\mathcal{W}_2$-distance.
}
In particular, for a class of weakly log-concave target distributions, the authors of \cite{Silveri2025beyond} derived iteration complexity $\tilde{\mathcal{O}}(d/\epsilon^2)$ for the EM scheme to achieve a given precision level $\epsilon$ in $\mathcal{W}_2$-distance, in the setting that the target distribution satisfies the weak log-concavity condition. 
%
For a general class of probability flow ODEs, the authors of \cite{kremling2025non} established an $\tilde{\mathcal{O}}(\sqrt{d}/\epsilon)$ iteration complexity in $\mathcal{W}_2$-distance under the  weak log-concavity condition on the target distribution.

Bearing the above error bounds in mind, the following interesting question thus arises:

{
\it
Beyond log-concavity, can the classical EM scheme for the SDE-based SGMs achieve an
$\tilde{\mathcal O}(\sqrt d/\epsilon)$ iteration complexity in
$\mathcal W_2$-distance?
}

This paper provides an affirmative answer to this question.
As illustrated by Theorem \ref{thm:em}, an $\tilde{\mathcal{O}}(\sqrt{d}/\epsilon)$ iteration complexity in $\mathcal{W}_2$-distance can be derived for the EM method under the weak log-concavity condition.
This matches the best known iteration complexity bound in any metric for the EM discretization of the reverse SDE.

{\color{black}
It is worth mentioning that, the authors of \cite{arsenyan2025assessing} derived the optimal $\tilde{\mathcal O}(\sqrt d/\epsilon)$ iteration complexity for the DDPM (EM) sampler in $\mathcal W_2$-distance under a uniform
posterior-covariance condition for Gaussian perturbations.
%
More recently in \cite{xixian2026wasserstein}, 
the authors also established the
$\tilde{\mathcal O}(\sqrt d/\epsilon)$ iteration complexity in $\mathcal W_2$-distance under a Gaussian-tail assumption on the target distribution. 
Compared to these two works, we rely on different assumptions and proof strategies in the error analysis. Notably, the assumptions and proof techniques used in this paper apply to other sampling algorithms, such as the Poisson midpoint method \cite{kandasamy2024poisson,srinivasan2026poisson}.
}

It is known that, the EM method still suffers from low sampling speed, requiring a large number of score function evaluations (often neural network evaluations) to generate new samples. 
A natural accelerated alternative is Runge-Kutta type method \cite{wu2024stochastic,Huang2025Convergence}, but the price is to impose stronger smoothness assumption on the (estimated) score function. 
Indeed, accelerated convergence guarantees of Runge-Kutta type methods for SGMs in $\mathcal{W}_2$-distance beyond log-concavity are, to the best of our knowledge, still missing in the literature.

To speed up the sampling, in this work we do not rely on Runge-Kutta type methods, but turn to the Poisson midpoint (PM) method \eqref{eq:poisson-method} originally proposed by \cite{kandasamy2024poisson}, where only convergence guarantees in KL divergence of PM-based Langevin Monte Carlo (LMC) were established.
Very recently in \cite{srinivasan2026poisson}, the authors further obtained a $\tilde{\mathcal{O}}(d^{2/3}/\epsilon^{2/3})$ iteration complexity in $\mathcal{W}_2$-distance for the PM-based overdamped LMC under the strongly log-concave condition and the Lipschitz condition of the drift.
Although numerical results in \cite{kandasamy2024poisson} showed that the PM-based SGMs substantially outperform the vanilla DDPM and ODE-based methods,
the theoretical convergence guarantee of the PM-based SGM is unfortunately still absent in the literature.
Therefore, we ask the following question:


{
\it Can an enhanced iteration complexity be obtained for PM-based SGM beyond log-concavity, to validate its outperformance over the other SDE-based samplers?}

In this work, we answer this question in the affirmative and establish an $\tilde{\mathcal{O}}(\sqrt{d}/\epsilon^{2/3})$ iteration complexity in $\mathcal{W}_2$-distance (see Theorem \ref{thm:poisson}) under the weak log-concavity condition. This thus fills the theoretical gap left in \cite{kandasamy2024poisson}. 

\subsection{Contributions of this work}
\label{subsec:Contributions}
%
To conclude, 
our contribution can be summarized as follows:
\begin{itemize}

\item  {\color{black}\textbf{Refined discretization error analysis.}} 
A {\color{black}refined} approach of discretization error analysis is developed to characterize the error propagation of the scheme. This helps us provide enhanced convergence guarantees in $\mathcal{W}_2$-distance for SDE-based SGMs with two existing discretization schemes beyond log-concavity, including the Euler-Maruyama method and Poisson midpoint method.

\item  \textbf{Enhanced iteration complexity for Euler--Maruyama discretization in $\mathcal{W}_2$-distance.}
We obtain a 
$\tilde{\mathcal{O}}(\sqrt{d}/\epsilon)$ iteration complexity of the EM
discretization~\eqref{eq:EM} of the reverse SDE that suffices to achieve
$\mathcal{W}_2$ error $\epsilon$ (see Theorem \ref{thm:em}), significantly improving the previous
$\tilde{\mathcal{O}}(d/\epsilon^2)$ EM bound of \cite{Silveri2025beyond}
under weak log-concavity.

\item \textbf{Sharper iteration complexity via Poisson midpoint discretization in $\mathcal{W}_2$-distance.}
For the Poisson midpoint method \eqref{eq:poisson-method}, we establish an iteration complexity of $\tilde{\mathcal{O}}\Big(\tfrac{\sqrt{d}}{\epsilon^{2/3}}\Big)$ for SGMs in $\mathcal{W}_2$-distance (see Theorem \ref{thm:poisson}), under weak
log-concavity and an additional Lipschitz-in-time condition on the score
(Assumption~\ref{ass:lipschitz-time}). 
    To the best of our knowledge, this bound fills the theoretical gap left in \cite{kandasamy2024poisson} and improves upon existing state-of-the-art Wasserstein convergence guarantees for SDE-based diffusion samplers.

\end{itemize}

In Table \ref{tab:Comparison}, we compare our theoretical findings with the existing results, in terms of the total iterations required to achieve a given precision level $\epsilon>0$.
Clearly, our results improve upon existing state-of-the-art Wasserstein convergence guarantees for SDE-based diffusion samplers.

\begin{table}
\caption{A comparison of iteration complexity for SGMs.}
\label{tab:Comparison}
\centering
\begin{threeparttable}
\begin{tabular}{cccccc}
\toprule
&
Algorithm      
& 
Metric 
& 
Iteration 
&
Log
&
Additional  
\\
&
&
&
complexity
&
-concavity
&
assumption\tnote{1}
\\
\midrule
\cite{chen2023sampling}
& 
EM
&
TV 
& 
$\tilde{\mathcal{O}}(d/\epsilon^2)$
& 
No
&
$\nabla \log p_t(x)$ Lipschitz in $x$
\\ \midrule 
\cite{chen2023improved}
& 
EM 
&
KL \tnote{2}
&
$\tilde{\mathcal{O}}
(d/\epsilon^2)$
& 
No 
&
$\nabla \log p_t(x)$ Lipschitz in $x$
\\
& 
EM 
&
KL \tnote{2}
&
$\tilde{\mathcal{O}}
(d^2/\epsilon^2)$
& 
No 
&
$\nabla \log p_0(x)$ Lipschitz in $x$
\\\midrule 
\cite{Gao2025Wasserstein}
& 
EM
&
$\mathcal{W}_2$
& 
$\tilde{\mathcal{O}}
(d /\epsilon^2)$
& 
Yes
&
$\nabla \log p_t(x)$ Lipschitz in $t$
\\\midrule 
\cite{yu2025advancing}
& 
EM
&
$\mathcal{W}_2$
& 
$\tilde{\mathcal{O}}
(d /\epsilon^2)$
& 
Yes
&
$\nabla \log p_t(x)$ Lipschitz in $t$
\\\midrule 
\cite{zhang2025sublinear} 
& 
DDRaM \tnote{3}
&
KL 
& 
$\tilde{\mathcal{O}}
(\sqrt{d} /\epsilon)$
& 
No 
&
$\tilde{s}_{\theta}(t,x)$ Lipschitz in $x$
\\\midrule 
\cite{kremling2025non}
& 
Euler (ODE)
&
$\mathcal{W}_2$
& 
$\tilde{\mathcal{O}}
(\sqrt{d} /\epsilon)$
& 
No
&
Weak log-concavity 
\\
& 
&
& 
& 
&
$\nabla \log p_t(x)$ Lipschitz in $t$
\\\midrule 
\cite{Silveri2025beyond}
& 
EM
&
$\mathcal{W}_2$
& 
$\tilde{\mathcal{O}}
(d /\epsilon^2)$
& 
No
&
Weak log-concavity 
\\\midrule 
This work
& 
EM
&
$\mathcal{W}_2$
& 
$\tilde{\mathcal{O}}
(\sqrt{d} /\epsilon)$
& 
No
&
Weak log-concavity 
\\\midrule 
This work
& 
PM 
&
$\mathcal{W}_2$
& 
$\tilde{\mathcal{O}}
(\sqrt{d} /\epsilon^{2/3})$
& 
No
&
Weak log-concavity 
\\
& 
&
& 
& 
&
$\nabla \log p_t(x)$ Lipschitz in $t$
\\
\bottomrule
\end{tabular}
\begin{tablenotes}
\item[1]
Smoothness assumptions on the (approximate) score function.
\item[2]
For error bounds in KL divergence, the given precision level is set to be $\epsilon^2$. 
\item[3] 
Denoising diffusion
randomized midpoint method. 
\end{tablenotes}
\end{threeparttable}
\end{table}

The rest of this article is organized as follows. The next section collects some notations, introduces SGMs and two types of methods throughout this paper.
In Section \ref{sec:assumption-result}, we present assumptions and main results of this paper, whose proofs are given in Appendix.
A technical overview of the proofs is given in Section \ref{sec:Technical-Overview}.
Finally, some concluding remarks are given in the last section.

\section{Preliminaries on SGMs}
\label{SGMs}
\subsection{Notation}
Throughout this paper, we use $\mathbb{N}$ to denote the set of all positive integers and let $d\in \mathbb{N}$. 
For any positive integer $m$, we denote $[m]:= \{1,...,m\}$ and $[m]_0:= \{0, 1,...,m\}$.
We use $I_d \in \mathbb{R}^{d\times d}$ to denote the identity matrix.
Let $\|\cdot\|$ and $\langle\cdot, \cdot\rangle$ denote the Euclidean norm and the inner product of vectors in $\mathbb{R}^d$, respectively. 
%
For any matrix $A=(a_{i,j})\in \mathbb{R}^{d\times d}$, the $\mathrm{F}$-norm is defined as
$\|A\|_{\mathrm{F}}
:=
\sqrt{\sum_{i,j=1}^d |a_{i,j}|^2}$ and the operator norm as $\|A\| =\|A\|_{\mathrm{op}} :=\sup_{\|v\|=1}\|Av\|$. It is not difficult to see
$
\|A\|
\leq
\|A\|_{\mathrm{F}}
\leq 
\sqrt{d}
\,
\|A\|.
$
We denote the density of the standard Gaussian distribution on $\mathbb{R}^d$ by $\gamma^d$.
For two numbers $a$ and $b$, we denote $a \vee b:=\max\{a, b\}$ and $a \wedge b:=\min\{a, b\}$. 
We write $\tilde{\mathcal{O}}(\cdot)$ to mean that it hides polylogarithmic factors in $d$ and $1/\epsilon$.

Let $\mathcal{B}(\mathbb{R}^d)$ be the Borel $\sigma$-field on $\mathbb{R}^d$, and let $\mathcal{P}(\mathbb{R}^d)$ denote the set of probability measures on $(\mathbb{R}^d,\mathcal{B}(\mathbb{R}^d))$. 
We write $\mathcal{P}_2(\mathbb{R}^d)$ for the subset of probability measures with finite second moments. 
By $\mathcal{L}(X)$ we denote the probability distribution of the $\mathbb{R}^d$-valued random variable $X$.
Given $\nu_1,\nu_2\in\mathcal{P}_2(\mathbb{R}^d)$, let $\mathcal{D}(\nu_1,\nu_2)$ be the set of all couplings of $\nu_1$ and $\nu_2$, i.e., probability measures on $\mathbb{R}^d\times\mathbb{R}^d$ whose first and second marginals are $\nu_1$ and $\nu_2$, respectively. 
The $\mathcal{W}_2$-distance between $\nu_1$ and $\nu_2$ is then defined by
\begin{equation}
\begin{aligned}
\label{eq:_def_w_2_distance}
\mathcal{W}_2
(\nu_1, \nu_2)
& :=
\inf_{\nu \in \mathcal{D}
(\nu_1, \nu_2)}
\left(
\int_{\mathbb{R}^d} \int_{\mathbb{R}^d}
\left\|
\theta_1-\theta_2
\right\|^2 
\mathrm{\, d} 
\,
\nu
\left(
\theta_1, \theta_2
\right)
\right)^{1/2}.
\end{aligned}
\end{equation}

Let $\mathcal{C}\left(\mathbb{R}^d, \mathbb{R}\right)$ denote the set of all continuous functions from $\mathbb{R}^d$ to $\mathbb{R}$. 
%
For $k \geq 0$, denote by $\mathcal{C}^k\left(\mathbb{R}^d, \mathbb{R}\right)$ the set of functions from $\mathbb{R}^d$ to $\mathbb{R}$ which have continuous $0$-th, $\ldots, k$-th order derivatives.

\subsection{Score-based generative models}

%
In this subsection, we provide a brief summary of the ideas behind the construction of score-based generative models (SGMs) based on diffusion processes \cite{songs2020core}. 
An SGM consists of a forward process and a reverse process. 
The forward process progressively transforms the target distribution
into a simple reference distribution. 
In this paper, we use the Ornstein--Uhlenbeck (OU) diffusion as the forward process. Specifically, let $\{\overrightarrow{X}_t\}_{0\leq t\leq T}$ solve
\begin{equation}
\label{eq:forward-SDE}
    \mathrm{d}\overrightarrow{X}_t
    =
    -\overrightarrow{X}_t\,\mathrm{d}t
    +
    \sqrt{2}\,\mathrm{d}B(t),
    \quad 
    \overrightarrow{X}_0 \sim \mu,
    \quad 0\leq t \leq T,
\end{equation}
where $\mu(\mathrm{d}x)\propto \exp(-V(x))\,\mathrm{d}x$ is the target distribution and $\{B(t)\}_{0\leq t\leq T}$ is an $\mathbb{R}^d$-valued standard Brownian motion. 
Then the OU process \eqref{eq:forward-SDE} admits the explicit representation as below
\begin{equation}
\label{eq:solution_ou}
\overrightarrow{X}_t
\overset{\mathrm{d}}{=}
\mathrm{e}^{-t}\overrightarrow{X}_0
+
\sqrt{1-\mathrm{e}^{-2t}}\,Z,
\quad 
Z \sim \gamma^d,
\end{equation}
where $Z$ is independent of $\overrightarrow{X}_0$. 
Thus, $\overrightarrow{X}_t$ converges exponentially to the standard Gaussian law $\gamma^d$.

SGMs generate new samples by simulating the time reversal of
\eqref{eq:forward-SDE}, which transports the terminal law $\mathcal{L}(\overrightarrow{X}_T)$ back to $\mu$.
Let $\{\overleftarrow{X}_t\}_{0\leq t\leq T}$ denote the corresponding exact reverse process. 
By the time-reversal formula for diffusions \cite{anderson1982reverse,Cattiaux2023Timereversal}, it satisfies
\begin{equation}
\label{eq:backward}
\mathrm{d} 
\overleftarrow{X}_t
=
\big(
\overleftarrow{X}_t
    +
    2 \nabla \log {p}_{T-t}
    (
    \overleftarrow{X}_t)
    \big) 
    \,
    \mathrm{d} t
    +
    \sqrt{2} 
    \,
    \mathrm{d} W(t),
    \quad 
    \overleftarrow{X}_0 \sim 
    \mathcal{L}(\overrightarrow{X}_T),
    \quad \quad 0\leq t \leq T,
\end{equation}
where $p_t$ denotes the density of $\mathcal{L}(\overrightarrow{X}_t)$ with respect to Lebesgue measure, and $\{W(t)\}_{0\leq t\leq T}$ is another $\mathbb{R}^d$-valued standard Brownian motion  defined on a filtered probability space $(\Omega_W,\mathcal{F}^W,$\ $\{\mathcal{F}^W_t\}_{t\geq 0},\mathbb{P}_W)$, satisfying the usual conditions.
Moreover, such a Brownian motion is assumed to be independent of $\{B(t)\}_{0\leq t\leq T}$. 
The drift $\nabla\log p_t$ of the reverse-time SDE  is the gradient of the log-density of $\mathcal{L}(\overrightarrow{X}_t)$, also known as the \emph{score function}.
Since
$\nabla \log \gamma^d
(x) = -x$, the backward SDE \eqref{eq:backward} can be rewritten in the following equivalent form:
\begin{equation}
\label{eq:backward-SDE}
    \mathrm{d} 
    \overleftarrow{X}_t
    =
    \big(
    -\overleftarrow{X}_t
    +
    2 \nabla \log \tilde{p}_{T-t}
    (
    \overleftarrow{X}_t)
    \big) 
    \,
    \mathrm{d} t
    +
    \sqrt{2} 
    \,
    \mathrm{d} W(t),
    \quad 
    \overleftarrow{X}_0 \sim 
    \mathcal{L}(\overrightarrow{X}_T),
    \quad \quad 0\leq t \leq T,
\end{equation}
where, for $(t, x) \in [0, T]\times \mathbb{R}^d$,
\[
\tilde{p}_t(x):= p_t(x)/\gamma^d(x).
\]
This time-reversal technique is extensively used in the literature (see, e.g., \cite{Cattiaux2023Timereversal,Conforti2025KLconvergence,Silveri2025beyond,strasman2025wasserstein,strasman2025analysis}), 
owing to its preservation of the forward Ornstein--Uhlenbeck \eqref{eq:forward-SDE} drift \(-x\) in the reverse dynamics.
However, since the score function $\nabla \log {p}_{t}(x)$, which appears in \eqref{eq:backward}, is not available. 
To obtain a
practical sampler, we replace it by a learned approximation
$s_\theta:[0,T]\times\mathbb{R}^d\to\mathbb{R}^d$ from a parametric
family $\{s_\theta\}_{\theta\in\Theta}$, with parameter $\theta^\star$
obtained by minimizing the denoising score-matching loss
\cite{Vincent2011Aconnection}:
\begin{equation}
\label{eq:score-estimator}
\theta 
\rightarrow
\int_0^T
\E
\Big[
\big\|
s_\theta
(t,\overrightarrow{X}_t)
-
\nabla \log p_t
(
\overrightarrow{X}_t)
\big\|^2
\Big] 
\dd t.
\end{equation}
Since 
$\nabla \log \tilde{p}_{t}(x)
=
x+\nabla \log {p}_{t}(x)
$ for any $x\in \mathbb{R}^d$ and $t \in [0,T]$, therefore, the corresponding estimator of the relative score is then defined by
\[
\tilde{s}_{\theta^{\star}}(t,x):=x+s_{\theta^{\star}}(t,x).
\]
Since $\mathcal{L}(\overrightarrow{X}_T)$ depends on the unknown target $\mu$ and is not directly accessible, we initialize the sampler from the stationary law $\gamma^d$ as shown in \eqref{eq:solution_ou}.
Replacing in addition the relative score in \eqref{eq:backward-SDE} by $\tilde{s}_{\theta^\star}$
yields the following practical sampler
\begin{equation}
\label{eq:reverse-sde-estimator}
\begin{aligned}
\mathrm{d} 
    Y_t
    =
    \big(
    -Y_t
    +
    2 \tilde{s}_{\theta^{\star}}
    (T-t, Y_t)
    \big) 
    \,
    \mathrm{d} t
    +
    \sqrt{2} 
    \,
    \mathrm{d} W(t),
    \quad 
    Y_0 \sim \gamma^d,
    \quad \quad 0\leq t \leq T.
\end{aligned}
\end{equation}

\subsection{Time discretization schemes}
For a general learned score, 
the reverse SDE \eqref{eq:reverse-sde-estimator}
cannot be simulated exactly.
Sampling generation therefore requires a time-discretization scheme for \eqref{eq:reverse-sde-estimator}.
Let $ \pi_h $ be a temporal grid of the form 
$$
\pi_h
:=
\big\{
t_{n}+\tfrac{ih}{k}=n \cdot h+\tfrac{ih}{k},
\quad n\in [N]_0, \, k\in [N],
\quad i\in[k-1]_0 \text { with } h=T / N
\big\}.
$$
We consider two types of discretization methods, which are widely used in existing work.
\begin{itemize}
    \item 
    Euler-Maruyama (EM) method, for any $n \in [N-1]_0, \,N \in \mathbb{N}$:
\begin{equation}
\label{eq:EM}
Y_{t_{n+1}}
=
Y_{t_n}
+
h
\big(
-Y_{t_n}
+
2\tilde{s}_{\theta^{\star}}(T-t_n,Y_{t_n})
\big)
+ 
\sqrt{2}\Delta W_{t_n},
\quad Y_0 \sim \gamma^d,
\end{equation}
where $\Delta W_{t_n}:= W(t_{n+1})-W(t_{n})$.

\item Poisson midpoint (PM) method \cite{srinivasan2026poisson,kandasamy2024poisson}, for any $n \in [N-1]_0, \,i \in [k-1]_0,\,\,N,k \in \mathbb{N}$: 
\begin{equation}
\label{eq:poisson-method}
\begin{aligned}
\tilde{Y}_{t_{n,i}}
& =
Y_{t_n}
+
\tfrac{ih}{k}
\big(
-Y_{t_n}
+
2\tilde{s}_{\theta^{\star}}(T-t_n,Y_{t_n})
\big)
+
\sum_{j=0}^{i-1}
\sqrt{\tfrac{2h}{k}}
Z_{t_{n,j}},\\
Y_{t_{n,i+1}}
& =
Y_{t_{n,i}}
+
\tfrac{h}{k}
\big(
-Y_{t_n}
+
2\tilde{s}_{\theta^{\star}}(T-t_n,Y_{t_n})
\big)
+
h B_{t_{n,i}}
\Big[
\big(
-\tilde{Y}_{t_{n,i}}
\\
& \quad
+
2\tilde{s}_{\theta^{\star}}(T-(t_n+\tfrac{ih}{k}),\tilde{Y}_{t_{n,i}})
\big)
-
\big(
-
Y_{t_n}
+
2\tilde{s}_{\theta^{\star}}(T-t_n,Y_{t_n})
\big)
\Big]
+
\sqrt{\tfrac{2h}{k}}
Z_{t_{n,i}}, \\
Y_{{t_{n+1}}} & =
Y_{{t_{n+1},0}} =Y_{t_{n,k}},
\quad Y_0 \sim \gamma^d,
\end{aligned}
\end{equation} 
where $Z_{t_{n,i}}$ denote independent standard Gaussians and $B_{t_{n,i}}$ denote independent Bernoulli random variables with parameter $1/k$.
\end{itemize}

For simplicity, we denote $t_{n,i} := t_n + \frac{ih}{k}$ for any $n \in [N-1]_0, \,i \in [k-1]_0$ in the sequel. 
The Poisson midpoint method was first introduced in \cite{kandasamy2024poisson} as a discrete variant of the randomized midpoint method proposed by \cite{shen2019randomized}.
Noting that $\E[\sum_{j=0}^{k-1}B_{t_{n,j}}]=1$, we observe that an expected number of $2$ gradient score function evaluations per batch is required, which matches the randomized midpoint method in \cite{shen2019randomized}.

\section{Assumptions and main results}
\label{sec:assumption-result}
We first delineate the assumptions underlying our results. We begin with the following definition of weak convexity \cite{Conforti2024Weaksemiconvexity,Silveri2025beyond}.  
\begin{defn}
[Weak convexity]
\label{def:weak-convexity}
The weak convexity profile of a function  $g \in \mathcal{C}^1(\mathbb{R}^d,\mathbb{R})$  is defined as
\begin{equation}
\kappa_g(r) 
:= 
\inf_{x, y \in \mathbb{R}^d: \|x - y\| = r} 
\left\{ \frac{\langle \nabla g(x) - \nabla g(y), x - y \rangle}{\|x - y\|^2} \right\}, 
\quad r > 0. 
\end{equation}
We say that $g$  is $(\alpha, M)$-weakly convex if
\begin{equation}
\kappa_g(r) 
\geq 
\alpha - 
\frac{1}{r} 
f_M(r) \quad \forall \,r > 0
\end{equation}
for some constants $\alpha, M > 0 $ and
\begin{equation}
f_M(r) 
: = 
2\sqrt{M} 
\tanh\left(\tfrac{1}{2}\sqrt{M}r\right).
\end{equation}
Moreover, we say that \( g \) is \((\alpha, M)\)-weakly concave if \(-g\) is \((\alpha, M)\)-weakly convex.
\end{defn}
First, we make the following weak convexity assumption of the potential function $V$.
\begin{assumption}
[Weak convexity]
\label{ass:weak-convex}
The target distribution \(\mu\) is absolutely continuous w.r.t. Lebesgue measure with \(\mu(\mathrm{d} x) \propto \exp(-V(x)) \mathrm{d} x\), where \(V: \mathbb{R}^d \to \mathbb{R}\) is {\color{black}$(\alpha,M)$-weakly convex for some constants \(\alpha,M>0\).}
\end{assumption}

Assumption~\ref{ass:weak-convex} relaxes the strong log-concavity condition commonly imposed in prior analyses \cite{Gao2025Wasserstein,yu2025advancing,bruno2025diffusion,strasman2025analysis}.
For example, certain Gaussian mixture models satisfy this weak convexity condition while failing to be strongly log-concave (see \cite[Appendix A]{Silveri2025beyond}).

{\color{black}A standard sufficient condition for Assumption \ref{ass:weak-convex} is the
contractivity at infinity condition (see \cite[Assumption 1.1, H2']{Conforti2024Weaksemiconvexity} or \cite[Assumption 1.1]{majka2020nonasymptotic}), which
implies the weak convexity Assumption
\ref{ass:weak-convex} (see \cite[Proposition 5.1]{Conforti2024Weaksemiconvexity} for details).
}

\begin{assumption}
[One-sided Lipschitz]
\label{ass:one-side-lipschitz}
The function $\nabla V: \mathbb{R}^d \rightarrow \mathbb{R}^d$ is one-sided Lipschitz continuous. 
Namely, there exists a constant
$L_V\geq 0$, independent of $d$, such that, for any $ x,y \in \mathbb{R}^d$,
\begin{equation}
\label{eq:one-side-lipschitz}
\langle
\nabla V(x)-\nabla V(y),x-y\rangle
\leq L_V\|x-y\|^2.
\end{equation}
\end{assumption}

All the results rely on the following $L^2$-accuracy of the score estimator. This assumption quantifies
the ability of the neural network architecture to approximate the true score function and serves to
control the score approximation error.

\begin{assumption}
[Score estimation error]
\label{ass:score-function-approximation}
There exist $\epsilon_0 \geq 0$ and $\theta^\star \in \Theta$ such that for any $n \in [N-1]_0, \,i \in [k-1]_0$,
    \begin{equation}
    \big\|
    \nabla \log \tilde{p}_{T-t_{n,i}}
    ({Y}_{t_{n,i}})-\tilde{s}_{\theta^{\star}}(T-t_{n,i}, {Y}_{t_{n,i}})
    \big\|_{L^2(\Omega, \mathbb{R}^d)}
    \leq
    \epsilon_0, 
    \end{equation}
    where ${Y}_{t_{n,i}}$ is produced by EM method \eqref{eq:EM} or PM method \eqref{eq:poisson-method}.
\end{assumption}

{\color{black}
Indeed, Assumption \ref{ass:score-function-approximation} can be replaced by the standard score-matching assumption along the forward noising distribution commonly used in the literature (e.g.,\cite{chen2023sampling,chen2023improved})
and the additional assumptions that $\tilde{s}_{\theta^\star}(T-t_{n,i}, \cdot), \, \nabla \log \tilde{p}_{T-t_{n,i}} (\cdot)$ are Lipschitz in space.
With this substitution, we can still arrive at the same error bounds after very minor modifications to the proof of the main theorem.
In this situation, the required spatial Lipschitz continuity of $\nabla \log \tilde{p}_{T-t_{n,i}} (\cdot)$ can be guaranteed by Lemma \ref{lem:score-lipschitz} and it only remains to assume the spatial Lipschitz of the learned score $\tilde{s}_{\theta^\star}(T-t_{n,i}, \cdot)$, as also used in \cite{chen2023probability}. 

We adopt the score error along the trajectory of the approximate process in Assumption \ref{ass:score-function-approximation} because it leads to the most direct and tractable treatment of the score approximation error within our proof framework. Such assumptions were also employed in existing literature, e.g., \cite{Gao2025Wasserstein,kremling2025non,yu2025advancing} in the manuscript.
}

\subsection{Main result for the Euler-Maruyama method}


In the following theorem, we quantify the $\mathcal{W}_2$-distance between the distribution of EM method \eqref{eq:EM} and the target distribution $\mu$.

\begin{thm}
\label{thm:em}
[Main result: error bounds for EM]
Suppose that Assumptions \ref{ass:weak-convex}, \ref{ass:one-side-lipschitz} and \ref{ass:score-function-approximation} hold. 
Let $\left\{Y_{t_n}\right\}_{n \in [N]_0}$ be produced by the EM method \eqref{eq:EM} with the uniform step-size $h=T/N$. 
For any $\lambda\in(0,1)$, assume that
\[
0 < h \leq \tfrac{2-\lambda}{2(2L+1)^2},
\]
where $L>0$ 
is the  Lipschitz constant of $x\mapsto\nabla\log\tilde p_t(x)$ from Lemma \ref{lem:score-lipschitz}.
Then there exists a constant $C>0$, independent of $d$ and $h$, such that
\begin{equation}
    \begin{aligned}
    \mathcal{W}_2
    \left(\mu, \mathcal{L}\left(Y_{t_N}\right)\right)  \leq
    C
    \mathrm{e}^{C \eta(\alpha, M,(2L+1)^2h+\lambda/2)}
    \Big(
    \underbrace{
    \sqrt{T d}h
    }_{\text{Discretization error}}
    +
    \underbrace{
    \mathrm{e}^{-T}
    \mathcal{W}_2
    (\mu,
    \gamma^d)}_{\text{Initialization error}}
    +
    \underbrace{
    \epsilon_0 T}_{\text{Score-matching error}}
    \Big),
    \end{aligned}
    \end{equation}
where $\epsilon_0$ comes from Assumption \ref{ass:score-function-approximation} and $\eta\big(\alpha, M,(2L+1)^2h+\lambda/2\big)$ is defined in \eqref{eq:eta-def}.
\end{thm}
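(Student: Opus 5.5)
We split the error through the exact reverse process \eqref{eq:backward-SDE} run from two initial laws and a synchronous coupling. Let $\{\overleftarrow{X}_t\}$ solve \eqref{eq:backward-SDE} with $\overleftarrow{X}_0\sim\mathcal{L}(\overrightarrow{X}_T)$, so that $\mathcal{L}(\overleftarrow{X}_T)=\mu$. Using the same Brownian motion $W$ as in \eqref{eq:EM}, we choose $\overleftarrow{X}_0$ optimally coupled with $Y_0\sim\gamma^d$. Then $\mathcal{W}_2(\mu,\mathcal{L}(Y_{t_N}))\le \|\overleftarrow{X}_{t_N}-Y_{t_N}\|_{L^2}$, and we aim for a one-step recursion for $e_n:=\overleftarrow{X}_{t_n}-Y_{t_n}$. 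The initial error is $\|e_0\|_{L^2}=\mathcal{W}_2(\mathcal{L}(\overrightarrow{X}_T),\gamma^d)\le \mathrm{e}^{-T}\mathcal{W}_2(\mu,\gamma^d)$, by the contraction of the OU semigroup and \eqref{eq:solution_ou}.

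\textbf{One-step decomposition.} Write $b_t(x):=-x+2\nabla\log\tilde p_{T-t}(x)$. Then
\[
e_{n+1}=\big(e_n+h(b_{t_n}(\overleftarrow{X}_{t_n})-b_{t_n}(Y_{t_n}))\big)+2h\big(\nabla\log\tilde p_{T-t_n}(Y_{t_n})-\tilde s_{\theta^\star}(T-t_n,Y_{t_n})\big)+R_n,
\]
where $R_n:=\int_{t_n}^{t_{n+1}}(b_s(\overleftarrow{X}_s)-b_{t_n}(\overleftarrow{X}_{t_n}))\,\mathrm{d}s$ is the local discretization error of the exact process.

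The first bracket is the contraction part. By Lemma \ref{lem:score-lipschitz}, $b$ is $(2L+1)$-Lipschitz in space. Weak convexity of $V$ propagates along the OU flow (as in \cite{Conforti2024Weaksemiconvexity,Silveri2025beyond}) to weak concavity of $\log\tilde p_{T-t}$ with a time-dependent profile. Expanding $\|e_n+h\Delta b\|^2$ therefore gives
\[
\|e_n\|^2\big(1-2h\kappa_{t_n}(\|e_n\|)+(2L+1)^2h^2\big).
\]
The $\tanh$ profile $f_M$ makes the non-contractive part bounded uniformly in $r$, so the loss is absorbed into a factor $\exp(h\,\eta\text{-type rate})$. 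The step condition $h\le(2-\lambda)/(2(2L+1)^2)$ is exactly what lets the $h^2$ term be absorbed with a leftover $\lambda/2$. This is where $\eta(\alpha,M,(2L+1)^2h+\lambda/2)$ arises, and summing over $n$ turns it into the prefactor $\mathrm{e}^{C\eta}$. The score term contributes $2h\epsilon_0$ per step by Assumption \ref{ass:score-function-approximation}, which sums to $\epsilon_0T$.

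\textbf{Key step: the local error $R_n$.} To get $\sqrt{Td}\,h$ rather than $\sqrt{Td h}$, we must not bound $\|R_n\|_{L^2}$ crudely; that gives only $h^{3/2}\sqrt d$ per step and hence $\sqrt{d h}$ after summation. Instead we split $R_n=R_n^{\mathrm{mart}}+R_n^{\mathrm{drift}}$ by applying Itô's formula to $b_s(\overleftarrow{X}_s)$. The useful structure is that $\nabla\log\tilde p_{T-s}(\overleftarrow{X}_s)$ is, up to the OU factor, a martingale along the reverse process. Hence the time derivative plus the generator term is explicit and of lower order, with $L^2$ norm of order $\sqrt d$ controlled via Assumption \ref{ass:one-side-lipschitz} and Lemma \ref{lem:score-lipschitz}. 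This yields $\|R_n^{\mathrm{drift}}\|_{L^2}\lesssim h^2\sqrt d$.

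The martingale part $\int_{t_n}^{t_{n+1}}\int_{t_n}^s\nabla b\,\mathrm{d}W\,\mathrm{d}s$ has $L^2$ norm of order $h^{3/2}\sqrt d$, but it has conditional mean zero given $\mathcal F_{t_n}$. So in $\E\|e_{n+1}\|^2$ its cross term with the $\mathcal F_{t_n}$-measurable main part vanishes, and it only enters squared, giving $h^3 d$ per step. Summing the squared contributions over $N=T/h$ steps gives $T h^2 d$, i.e. $\sqrt{Td}\,h$. The drift part, summed linearly, gives $T h\sqrt d$, which is also at most $\sqrt{Td}\,h$ up to constants for $T\gtrsim1$, or is absorbed into $C$.

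We therefore run the recursion on $\E\|e_n\|^2$ rather than on norms: a discrete Grönwall bound with the contraction factor, the linear score and drift terms, and the quadratic martingale term. Taking square roots yields the claim. The expected main obstacle is this martingale/orthogonality treatment combined with the weak (non-uniform) contraction, because the $r$-dependent profile must be handled pointwise in $\|e_n\|$ before taking expectations. If needed, we would first bound the profile's defect uniformly, which is what the $\eta$ prefactor pays for.
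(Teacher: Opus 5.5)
Your architecture is close to the paper's: a one-step $L^2$ recursion, and the It\^o identity $\mathrm{d}b_t(\overleftarrow{X}_t)=\overleftarrow{X}_t\,\mathrm{d}t+\sqrt{2}\big(-I_d+2\nabla^2\log\tilde p_{T-t}(\overleftarrow{X}_t)\big)\,\mathrm{d}W_t$. That identity splits $R_n$ into a conditional mean of size $h^2\sqrt{d}$ and a mean-zero part of size $h^{3/2}\sqrt{d}$ that enters only squared. You also use the regime-shift contraction. Coupling $\overleftarrow{X}$ directly with $Y$, instead of passing through $X^N$ and $X^\infty$, is a harmless reorganization.

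The gap is in how you treat the conditional-mean (drift) part of $R_n$. Summed linearly over $N=T/h$ steps, it contributes $Th\sqrt{d}=\sqrt{T}\cdot\sqrt{Td}\,h$. This is \emph{larger} than $\sqrt{Td}\,h$ when $T\ge 1$, so the comparison you state goes the wrong way. The extra $\sqrt{T}$ also cannot be absorbed into $C$, which must not depend on $T$. Otherwise the explicit $T$'s in the bound are vacuous, and Corollary~\ref{cor:EM} takes $T\sim\log(\sqrt{d}/\epsilon)$, which depends on $d$. Because you count all $N$ steps without using any strict contraction, your argument as written gives a discretization term of order $T\sqrt{d}\,h$, not the stated $\sqrt{Td}\,h$.

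The missing idea is to pay for the conditional mean quadratically, using part of the contraction budget. The paper writes $2\mathbb{E}\langle e_n,R_n\rangle=2\mathbb{E}\langle e_n,\mathbb{E}[R_n\mid\mathcal{F}_{t_n}]\rangle\le\lambda h\,\mathbb{E}\|e_n\|^2+\tfrac{1}{\lambda h}\mathbb{E}\|\mathbb{E}[R_n\mid\mathcal{F}_{t_n}]\|^2$. It then absorbs $\lambda h\,\mathbb{E}\|e_n\|^2$ into the one-step factor, which stays $\le 1$ on the contractive window. This is exactly why $\lambda$ enters the step-size condition and $\eta(\alpha,M,(2L+1)^2h+\lambda/2)$. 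The conditional mean then costs $dh^4/(\lambda h)=dh^3/\lambda$ per step, the same order as the martingale part. Summing over $N$ steps gives $Tdh^2$, i.e.\ $\sqrt{Td}\,h$ after taking square roots.

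Alternatively, keep your linear bookkeeping but actually use the leftover margin you mention. Set $\Delta b_n:=b_{t_n}(\overleftarrow{X}_{t_n})-b_{t_n}(Y_{t_n})$. If $2C_{T-t_n}+1\ge\rho=(2L+1)^2h+\lambda/2$, then $\mathbb{E}\|e_n+h\Delta b_n\|^2\le(1-\lambda h)\,\mathbb{E}\|e_n\|^2$. The geometric sums are then $O(1/(\lambda h))$ rather than $O(T/h)$.

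Relatedly, your contraction factor should be $1-2h(2C_{T-t_n}+1)+(2L+1)^2h^2$, using the uniform-in-$r$ bound $C_{T-t}$ of Proposition~\ref{prop:Propagation-weak-concavity}. The prefactor $\mathrm{e}^{C\eta}$ is the growth over the final window of length $\eta$, where only the Lipschitz bound $2L+1$ holds (Proposition~\ref{prop:regime-shift-drift}). It does not come from a uniform-in-$r$ defect of the profile.
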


{\color{black}A brief proof overview is provided in the next section, while the complete proof is deferred to Appendix.}
As a direct consequence, we obtain the following scaling of the iteration number, terminal time $T$, and score approximation error required to achieve a given $\mathcal{W}_2$ accuracy.
The proof is straightforward and omitted here.

\begin{cor}
\label{cor:EM}
    Let Assumptions \ref{ass:weak-convex}, \ref{ass:one-side-lipschitz} and \ref{ass:score-function-approximation} hold. 
    To achieve a given precision level $\epsilon>0$ in $\mathcal{W}_2$-distance, the required number of iterations of the EM method \eqref{eq:EM}
    is of order $\tilde{\mathcal{O}}\big(\tfrac{\sqrt{d}}{\epsilon}\big)$ for the discretization error to be small, the time scale $T$ is of order $\mathcal{O}\big(\log(\tfrac{\sqrt{d}}{\epsilon})\big)$ for the initialization error to be small, and the score-matching error $\epsilon_0$ is of order $\mathcal{O}\Big(\tfrac{\epsilon}{\log(\epsilon^{-1}\sqrt{d})}\Big)$ for the propagated score-matching error to be small.
\end{cor}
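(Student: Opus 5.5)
The plan is to derive the corollary directly from the bound in Theorem \ref{thm:em}. First I fix the free parameter: take $\lambda=1/2$, say, and restrict to step sizes $h\le h_0$ for some small $h_0$ independent of $d$ and $\epsilon$, with $h_0$ also below $\tfrac{2-\lambda}{2(2L+1)^2}$. By Lemma \ref{lem:score-lipschitz}, $L$ is independent of $d$. Then the argument $(2L+1)^2h+\lambda/2$ of $\eta$ lies in a fixed compact range. Assuming $\eta$ from \eqref{eq:eta-def} depends only on $\alpha,M$ and this argument, and is bounded there, the prefactor $C\mathrm{e}^{C\eta(\cdot)}$ is a constant $C'$ independent of $d,h,T$. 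The theorem then reads
\[
\mathcal{W}_2\big(\mu,\mathcal{L}(Y_{t_N})\big)\le C'\big(\sqrt{Td}\,h+\mathrm{e}^{-T}\mathcal{W}_2(\mu,\gamma^d)+\epsilon_0T\big).
\]

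Next I make each of the three terms at most $\epsilon/(3C')$. For the initialization term I need $\mathcal{W}_2(\mu,\gamma^d)\le \|\overrightarrow X_0\|_{L^2}+\sqrt d=\mathcal{O}(\sqrt d)$. This holds because Assumption \ref{ass:weak-convex} gives strong convexity at infinity, so $\mu$ has second moment $\mathcal{O}(d)$. It can be checked with the standard dissipativity estimate $\langle\nabla V(x),x\rangle\ge \alpha\|x\|^2/2-c\|x\|$, integrated against $\mu$ by parts. Consequently $T=\log(3C'\mathcal{W}_2(\mu,\gamma^d)/\epsilon)=\mathcal{O}(\log(\sqrt d/\epsilon))$ suffices.

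With this $T$, the score term requires $\epsilon_0\le \epsilon/(3C'T)$, that is, $\epsilon_0=\mathcal{O}\big(\epsilon/\log(\epsilon^{-1}\sqrt d)\big)$. The discretization term requires $h\le \epsilon/(3C'\sqrt{Td})$. Therefore
\[
N=T/h\ge 3C'T^{3/2}\sqrt d/\epsilon,
\]
which is $\tilde{\mathcal{O}}(\sqrt d/\epsilon)$ once the $\log^{3/2}$ factor is absorbed. Since $\epsilon$ is small, this $h$ is automatically below $h_0$, so the step-size restriction is consistent.

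The main obstacle is confirming that the exponential prefactor is genuinely dimension-free and uniformly bounded as $h\to0$. This needs the explicit form of $\eta$ in \eqref{eq:eta-def}; I expect it to be a function of $\alpha,M$ and the contraction margin only. I would check this first. If $\eta$ also grew with $T$, I would verify that the growth is at most polynomial in $T$, so that it contributes only polylogarithmic factors.
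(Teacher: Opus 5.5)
Your proposal is correct and follows essentially the same route as the paper. The paper omits this proof, but its proof of Corollary \ref{cor:PM} in the appendix makes each of the three terms of the main bound at most $\epsilon/3$, takes $T\ge\log\big(3\sqrt{(\mathcal{M}+1)d}/\epsilon\big)$ and $\epsilon_0\le\epsilon/(3T)$, and then solves for $h$, which is exactly what you do. The only differences are that the paper gets $\mathcal{W}_2(\mu,\gamma^d)\le\sqrt{(\mathcal{M}+1)d}$ by citing Lemma \ref{lem:target-p-order-moment} rather than rederiving the $\mathcal{O}(d)$ second moment by dissipativity, and that your check that $\eta$ stays bounded is a detail the paper leaves implicit: $\eta$ depends only on $\alpha,M,\rho$, is capped by $T$, and is finite for $\rho$ bounded away from $1$.
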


{\color{black}
As validated in \cite[Proposition 4.1]{Silveri2025beyond}, in the case of Gaussian mixture distributions with equal variances, 
\begin{equation}
p_0(x)
:=
\sum_{i=1}^{m}
\beta_i
\frac{1}{(2\pi\sigma^2)^{d/2}}
\exp\left(
-\frac{\|x-\mu_i\|^2}{2\sigma^2}
\right),
\qquad x\in\mathbb R^d,
\end{equation}
where $\sigma>0$, $\mu_i\in\mathbb R^d$, and $\beta_i \in [0,1]$ for $i \in [m]$, such that 
$
\sum_{i=1}^{m}\beta_i=1,
$
the $-\log p_0 $ is weakly convex and $\nabla \log p_0$ is Lipschitz. Moreover, the corresponding weak convexity parameters $\alpha_{p_0}=1/\sigma^2,\, \sqrt{M_{p_0}}:= 2m \sum_{i=1}^m \|\mu_i\|/\sigma^2$ and the Lipschitz constant $L_{p_0}:=\alpha_{p_0}+{M_{p_0}}$ is independent of $d$, whenever
$\max_{1\le i\le m}\|\mu_i\|$ is bounded independently of $d$ (see \cite[Appendix A]{Silveri2025beyond} for details). Hence, the stated complexity bound $\tilde{\mathcal{O}}\big(\tfrac{\sqrt{d}}{\epsilon}\big)$ has no hidden dependence on $d$ for fixed 
$\sigma$ and $\mu_i$.

In this more general setting, where the covariance matrices of each mode of the Gaussian mixture differ, the required weak convexity and global score Lipschitz properties may not be satisfied. Even when these properties hold, the associated constants may depend on $d$.

Consequently, for general Gaussian mixture distributions with unequal covariance matrices, our analysis does not guarantee that all hidden constants in the stated complexity bound are independent of the dimension $d$.
}

\subsection{Main result for the Poisson midpoint method}

The PM analysis additionally requires the score to be Lipschitz continuous in time,
which is
a standard regularity assumption in the literature
\cite{kremling2025non,yu2025advancing,Gao2025Wasserstein}.

\begin{assumption}
[Lipschitz in time]
\label{ass:lipschitz-time}
There exists a constant $\tilde{L}>0$ such that for any $n \in [N-1]_0$, it holds that
    \begin{equation}
    \sup_{t_n\leq t,s\leq t_{n+1}}
    \big\|
    \nabla \log {p}_{T-t}
    (x)
    -
    \nabla \log {p}_{T-s}
    (x)
    \big\|
    \leq 
    \tilde{L} 
    h
    (1+\|x\|), 
    \quad \forall x \in \mathbb{R}^d.
    \end{equation}
\end{assumption}

{\color{black}
Next, we present examples satisfying the above assumption, whose proof is deferred to Appendix \ref{app:proof-lipschitz-t}.

\begin{prop}
\label{prop:time-lipschitz-gaussian-mixture}
Let $p_0$ be a Gaussian mixture in $\mathbb{R}^d$ having density law
\[
p_0(x)=
\sum_{i=1}^{m}
\beta_i
\frac{1}{(2\pi\sigma^2)^{d/2}}
\exp\left(
-\frac{\|x-\mu_i\|^2}{2\sigma^2}
\right),
\]
where $\sigma>0, \, \mu_i \in \mathbb{R}^d$ and $\beta_i \in [0,1],$ for $i \in [m]$, such that $\sum_{i=1}^m\beta_i=1$.
Then there exists a constant $\tilde L>0$, independent of $d$, such that
\[
\|\partial_r\nabla\log p_r(x)\|
\le
\tilde L (1+\|x\|),
\qquad
r\ge0,\ x\in\mathbb R^d .
\]
Consequently, for any $n \in [N-1]_0$,
\[
\sup_{t,s\in[t_n,t_{n+1}]}
\|\nabla \log p_{T-t}(x)
-\nabla\log p_{T-s}(x)\|
\le
\tilde L h(1+\|x\|),
\quad
x\in\mathbb R^d.
\]
\end{prop}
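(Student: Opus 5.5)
Since $p_0$ is a Gaussian mixture and the forward process is the OU process \eqref{eq:forward-SDE}, the plan is to compute $\nabla\log p_r$ in closed form and differentiate in $r$ directly. By \eqref{eq:solution_ou}, $p_r$ is again a Gaussian mixture with the same weights $\beta_i$, means $\mathrm{e}^{-r}\mu_i$ and common isotropic variance $\sigma_r^2:=\mathrm{e}^{-2r}\sigma^2+1-\mathrm{e}^{-2r}$, which always lies between $\min\{\sigma^2,1\}$ and $\max\{\sigma^2,1\}$. Writing $w_i(r,x)$ for the posterior weights (a softmax of $\log\beta_i-\|x-\mathrm{e}^{-r}\mu_i\|^2/(2\sigma_r^2)$), we obtain
\[
\nabla\log p_r(x)=-\frac{x}{\sigma_r^2}+\frac{\mathrm{e}^{-r}}{\sigma_r^2}\sum_{i=1}^m w_i(r,x)\,\mu_i .
\]
The normalising constants $(2\pi\sigma_r^2)^{-d/2}$ are common to all components, so they cancel in the $w_i$. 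This cancellation is what keeps $d$ out of the final constant.

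Next I differentiate this expression in $r$. The first term gives $\partial_r(\sigma_r^{-2})\,x$. Since $\partial_r\sigma_r^2=2\mathrm{e}^{-2r}(1-\sigma^2)$ is bounded and $\sigma_r^2$ is bounded below, this term is $O(\|x\|)$ with a constant depending only on $\sigma$. The second term splits into two parts: one where $\partial_r$ hits the prefactor $\mathrm{e}^{-r}/\sigma_r^2$, and one where it hits the weights. The first part is bounded by $C\max_i\|\mu_i\|$, because $\sum_i w_i=1$. For the second part I use the softmax derivative $\partial_r w_i=w_i(\partial_r\ell_i-\sum_j w_j\partial_r\ell_j)$, where $\ell_i:=-\|x-\mathrm{e}^{-r}\mu_i\|^2/(2\sigma_r^2)$. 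This gives $\|\partial_r w_i\|\le 2w_i\max_j|\partial_r\ell_j|$, up to summation.

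The step I expect to be the main obstacle is controlling $\partial_r\ell_j$. Expanding,
\[
\partial_r\ell_j=-\frac{\mathrm{e}^{-r}\langle x-\mathrm{e}^{-r}\mu_j,\mu_j\rangle}{\sigma_r^2}+\frac{\partial_r\sigma_r^2}{2\sigma_r^4}\|x-\mathrm{e}^{-r}\mu_j\|^2 .
\]
The second piece is quadratic in $\|x\|$, which would give a bound like $(1+\|x\|)^2$ instead of the required linear growth.

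My first attempt is to notice that the quadratic part $\|x\|^2$ is the same for every $j$, so it cancels in the centred difference $\partial_r\ell_i-\sum_j w_j\partial_r\ell_j$. After this cancellation only terms linear in $x$ remain, namely $\langle x,\mu_j\rangle$ with coefficients bounded in $r$, together with constants in $\|\mu_j\|^2$. Hence $|\partial_r w_i|\le C(1+\|x\|)$, with $C$ depending only on $\sigma$, $m$ and $\max_i\|\mu_i\|$. Multiplying by $\|\mu_i\|$ and summing over $i$ yields a bound $C(1+\|x\|)$ on the weight part as well.

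Collecting the three contributions gives $\|\partial_r\nabla\log p_r(x)\|\le\tilde L(1+\|x\|)$ uniformly in $r\ge0$. The constant $\tilde L$ is independent of $d$ whenever $\max_i\|\mu_i\|$ is, as in the discussion after Corollary \ref{cor:EM}. For the consequence, I apply the fundamental theorem of calculus in $r$ between $T-t$ and $T-s$, which satisfy $|t-s|\le h$ for $t,s\in[t_n,t_{n+1}]$. This gives $\|\nabla\log p_{T-t}(x)-\nabla\log p_{T-s}(x)\|\le\tilde L h(1+\|x\|)$, which is Assumption \ref{ass:lipschitz-time}.
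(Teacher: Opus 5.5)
Your proposal is correct and follows essentially the same route as the paper: the explicit Gaussian-mixture form of $p_r$ with variance $v_r=\sigma_r^2$, the softmax-weighted mean whose $r$-derivative is controlled through the centred weight derivatives, and the fundamental theorem of calculus in $r$ for the consequence. The only cosmetic difference is the order of two steps. The paper drops the component-independent term $-\|x\|^2/(2v_r)$ from the exponent before differentiating, so its $\partial_r\ell_i$ is already linear in $\|x\|$. You differentiate the full exponent and let that term cancel in the centred difference $\partial_r\ell_i-\sum_j w_j\partial_r\ell_j$, which is the same mechanism.
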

}

Thanks to the above assumptions, we are able to derive a non-asymptotic error bound in $\mathcal{W}_2$-distance between the distribution of PM method \eqref{eq:poisson-method} and the target distribution $\mu$. Its sketch of the complete proof is provided in Section \ref{sec:Technical-Overview}.
\begin{thm}
\label{thm:poisson}
[Main result: error bounds for PM]
    Suppose that Assumptions \ref{ass:weak-convex}, \ref{ass:one-side-lipschitz}, \ref{ass:score-function-approximation} and \ref{ass:lipschitz-time} hold. 
    Let $\left\{Y_{t_n}\right\}_{n \in [N]_0}$ be produced by the PM method \eqref{eq:poisson-method} using the uniform step-size. For any $\lambda\in (0,1/4)$, let the step-size satisfy
    \[
    h \leq
    \tfrac{2(1-\lambda)}
    {C(L, \tilde{L})}
    \wedge
    \tfrac{1}{C(L,\alpha)},
    \]
    where $C(L, \tilde{L})$ and $C(L,\alpha)$ are given by \eqref{eq:numerical-bounded-constant-c} and \eqref{eq:C-def}, respectively.
    Then there exists a constant $C$, independent of $d$ and $h$, such that,
    \begin{equation}
    \begin{aligned}
    &
    \mathcal{W}_2
    \left(\mu, \mathcal{L}\left(Y_{t_N}\right)\right) \\
    & \leq
    C
    \mathrm{e}^{C\tilde{\eta}(\alpha, M, \rho_h)}
    \Big(
    \underbrace{
    T^3\big(
    \tfrac{\sqrt{d}h}{k}
    +
    k d h^2
    +
    dh^2
    +
    \sqrt{d}h^{3/2}
    \big)}_{\text{Discretization error}}
    +
    \underbrace{
    \mathrm{e}^{-T}
    \mathcal{W}_2
    (\mu,
    \gamma^d)}_{\text{Initialization error}}
    +
    \underbrace{
    \epsilon_0 T
    }_{\text{Score matching error}}
    \Big).
    \end{aligned}
    \end{equation}
    with $\tilde{\eta}(\alpha, M, \rho_h):=\eta(\alpha, M,C(L, \tilde{L})h/2+\lambda)+\eta(\alpha, M,(3(2L+1)^2 h/2+\lambda) \vee C(L,\alpha)h)$, and $\eta\big(\alpha, M, \rho)$ as defined in \eqref{eq:eta-def}, where $L>0$ is the Lipschitz constant of $x\mapsto\nabla\log\tilde p_t(x)$ from Lemma \ref{lem:score-lipschitz} and $\epsilon_0$ comes from Assumption \ref{ass:score-function-approximation}.
\end{thm}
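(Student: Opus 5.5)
The proof splits the error by the triangle inequality, $\mathcal{W}_2(\mu,\mathcal{L}(Y_{t_N}))\le \mathcal{W}_2(\mu,\mathcal{L}(\overleftarrow{X}_T))+\|\overleftarrow{X}_{t_N}-Y_{t_N}\|_{L^2}$. Here $\overleftarrow{X}$ solves the exact reverse SDE \eqref{eq:backward-SDE}, and it is synchronously coupled with the PM chain \eqref{eq:poisson-method}. The Brownian increments over each substep $[t_{n,i},t_{n,i+1}]$ are identified with $\sqrt{2h/k}\,Z_{t_{n,i}}$, and the initial values are coupled optimally so that $\|\overleftarrow{X}_0-Y_0\|_{L^2}=\mathcal{W}_2(\mathcal{L}(\overrightarrow{X}_T),\gamma^d)\le \mathrm{e}^{-T}\mathcal{W}_2(\mu,\gamma^d)$ by OU contraction. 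The engine of the argument is the reflection-free contraction estimate available under Assumption \ref{ass:weak-convex}. Weak convexity of $V$ propagates along the OU flow to weak concavity of $\log\tilde p_t$ with profile parameters $(\alpha_t,M_t)$. Lemma \ref{lem:score-lipschitz} gives the $L$-Lipschitz bound. Together these yield a one-step estimate
\[
\|e_{n+1}\|^2_{L^2}\le (1-\rho_n h)\|e_n\|^2_{L^2}+(\text{local errors}),
\]
with a time-dependent rate whose integrated negative part is controlled by $\eta(\alpha,M,\rho)$. This is exactly how the factor $\mathrm{e}^{C\tilde\eta}$ arises, as in the EM proof of Theorem \ref{thm:em}. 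The two $\eta$ terms in $\tilde\eta$ suggest running this contraction twice: once for the full-step map (rate $C(L,\tilde L)h/2+\lambda$), and once for the auxiliary Euler predictor $\tilde Y_{t_{n,i}}$ and substep map (rate $(3(2L+1)^2h/2+\lambda)\vee C(L,\alpha)h$).

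The central step is a refined local error decomposition for one PM block. I write $\overleftarrow{X}_{t_{n+1}}-Y_{t_{n+1}}$ as the drift difference evaluated at $(\overleftarrow{X}_{t_n},Y_{t_n})$ plus a remainder $R_n$. I then split $R_n=\E[R_n\mid\mathcal{F}_{t_n}]+(R_n-\E[R_n\mid\mathcal{F}_{t_n}])$. The point is that the mean part enters the error recursion linearly, so it accumulates as $\sum_n\|\E R_n\|\sim N\cdot(\text{local mean error})$. The fluctuation part is a martingale increment, orthogonal to $e_n$, and accumulates as $(\sum_n\|R_n\|^2)^{1/2}$. Next I use $\E[B_{t_{n,i}}]=1/k$. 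Conditionally, the Bernoulli correction reproduces $\frac{h}{k}\sum_i b(T-t_{n,i},\tilde Y_{t_{n,i}})$, which is a Riemann sum of the drift at predicted points, i.e. a left-point quadrature on the fine grid. Its bias against $\int_{t_n}^{t_{n+1}}b(T-s,\overleftarrow{X}_s)\,\mathrm{d}s$ has two sources. The first is the fine-grid quadrature, contributing $\sqrt d\,h^2/k$ per step via $\|\overleftarrow{X}_s-\overleftarrow{X}_{t_{n,i}}\|_{L^2}\lesssim\sqrt{d h/k}$. The second is the predictor error $\|\tilde Y_{t_{n,i}}-\overleftarrow{X}_{t_{n,i}}\|$, which itself is $O(\sqrt d\,h)$ plus $e_n$, giving an $O(dh^3)$-type second-order term. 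Time regularity from Assumption \ref{ass:lipschitz-time}, combined with second-moment bounds $\E\|\overleftarrow{X}_t\|^2\lesssim d$, controls the $t$-variation of the score. The score-matching error enters through Assumption \ref{ass:score-function-approximation} and contributes $\epsilon_0 h$ per step, hence $\epsilon_0T$ in total. The variance part comes mostly from the Bernoulli randomization, with $\mathrm{Var}\lesssim h^2k\cdot\frac1k\|\Delta b\|^2$ per substep summed over $k$ substeps, together with Brownian fluctuations. Summed in quadrature over $N$ steps, it produces the $kdh^2$ and $\sqrt d h^{3/2}$ terms. Dividing mean contributions by $h$ and summing over $N=T/h$ steps gives $\sqrt d h/k+dh^2$. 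Crude polynomial bookkeeping of $T$ from moment bounds and products of sums yields the $T^3$ prefactor.

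Finally, I unroll the recursion with the discrete Grönwall argument. The step-size conditions $h\le 2(1-\lambda)/C(L,\tilde L)\wedge 1/C(L,\alpha)$ are exactly what is needed to keep $1-\rho_nh$ in $(0,1)$ and to absorb the quadratic-in-$h$ Lipschitz terms $(2L+1)^2h^2$ into $\lambda$. This gives the displayed bound. The main obstacle I expect is the conditional-mean/martingale splitting in the presence of the Bernoulli-weighted predictor: the cross term $\E\langle e_n,R_n\rangle$ has to be bounded by $\|e_n\|\,\|\E[R_n\mid\mathcal F_{t_n}]\|$ rather than by $\|e_n\|\,\|R_n\|$. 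To make this work, I would first expand $b$ around $\overleftarrow{X}_{t_{n,i}}$, using only spatial Lipschitz continuity and one-sided bounds from Assumption \ref{ass:one-side-lipschitz}, since no Hessian bound is assumed. I would then show that the surviving first-order terms are conditionally centered Gaussian or Bernoulli fluctuations.
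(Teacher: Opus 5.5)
The step that fails is the variance bookkeeping under your synchronous coupling, and it is exactly the step that separates PM from EM. Suppose the Brownian increment on $[t_{n,i},t_{n,i+1}]$ is identified with $\sqrt{h/k}\,Z_{t_{n,i}}$. Then each sub-step error contains the conditionally centred term $h(B_{t_{n,i}}-\tfrac1k)\Delta_i$, with $\Delta_i:=b_{t_{n,i}}(\tilde X_{t_{n,i}})-b_{t_n}(X_{t_n})$. This term is independent of the Brownian noise, so nothing cancels it. Since $\tilde X_{t_{n,i}}-X_{t_n}$ contains $\sqrt{2h/k}\sum_{j<i}Z_{t_{n,j}}$, $\E\|\Delta_i\|^2$ is generically of order $dh$; this already happens for $\mu=\gamma^d$, where $b_t(x)=-x$. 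The term therefore adds about $\tfrac{h^2}{k}\,dh$ per sub-step, $dh^3$ per macro-step, and $Tdh^2$ after $N=T/h$ steps. That is an $L^2$ error of order $\sqrt{Td}\,h$, the EM rate. Summing in quadrature does not produce $kdh^2+\sqrt d\,h^{3/2}$, and your argument would only give $\tilde{\mathcal O}(\sqrt d/\epsilon)$ iterations.

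The missing idea is the non-synchronous coupling of Lemmas~\ref{lem:key-lem} and \ref{lem:key-lem-coupling}. Write each PM sub-step as an EM sub-step driven by an increment $\xi_{t_{n,i}}$, plus $\sqrt{2h/k}\,(Z_{t_{n,i}}+V-\xi_{t_{n,i}})$, plus $\tfrac hk\big(b_{t_{n,i}}(\tilde X_{t_{n,i}})-b_{t_{n,i}}(X_{t_{n,i}})\big)$, where $V=\sqrt{hk/2}\,(B_{t_{n,i}}-\tfrac1k)\Delta_i$. Then \emph{define} $\xi_{t_{n,i}}$ as the Brenier image of $Z_{t_{n,i}}+V$, and embed the $\xi$'s into the Wiener process that drives $\overleftarrow X$. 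Because $V$ is centred and lies on the line spanned by $\Delta_i$, you get $\E\|Z+V-\xi\|^2\le\tfrac{11}{2}\nu^2+2\nu\,\mathbf{1}_{\{5\beta^2>1\}}$ with $\nu\approx\tfrac h2\|\Delta_i\|^2$. This is quadratic rather than linear in $\nu$. The indicator is removed by Markov's inequality using fourth moments of $\Delta_i$ (Lemmas~\ref{lem:numerical-p-order-moment} and \ref{lem:prior-bound}). This is where the $kdh^2$ and $dh^2$ terms come from, and also the $T^3$ factor (via the $T^5d^2$ fourth-moment bound). It is also the source of the first $\eta$ in $\tilde\eta$: that term is the contraction used in the fourth-moment bound for $X^N$, not a second contraction for the predictor.

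Two further points. First, the coupling lemma needs Lipschitz control and fourth moments of the drift increments. These are available for the exact-score chain but not for $Y$, since nothing beyond the $L^2$ bound of Assumption~\ref{ass:score-function-approximation} is assumed on $\tilde s_{\theta^\star}$. This is why the paper compares $\overleftarrow X$ with the auxiliary chain $X^N$ rather than directly with $Y$, and handles the learned score separately through $X^\infty$ and $Y$ with shared $(Z,B)$.

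Second, your quadrature count relies on the bias estimate $\|\E[R_{t_{n,i}}\mid\mathcal F_{t_{n,i}}]\|_{L^2}\lesssim\sqrt d\,(h/k)^2$, which does not follow from spatial Lipschitz bounds. Those only give $\sqrt d\,(h/k)^{3/2}$, which reproduces the old $\sqrt{h}$ rate. A first-order Taylor expansion with a controlled remainder would need third derivatives of $\log\tilde p_t$. The paper instead uses the It\^o identity $\mathrm{d}\,b_t(\overleftarrow X_t)=\overleftarrow X_t\,\mathrm{d}t+\sqrt2\big(-I_d+2\nabla^2\log\tilde p_{T-t}(\overleftarrow X_t)\big)\,\mathrm{d}W_t$, whose drift is only $\overleftarrow X_t$. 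The Hessian bound needed for the martingale part is supplied by Lemma~\ref{lem:score-lipschitz}, so such a bound is in fact available, contrary to your remark.
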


As a direct consequence, to achieve a given precision level $\epsilon>0$, meaning that 
$\mathcal{W}_2
\left(\mu, \mathcal{L}\left(Y_{t_N}\right)\right)
\leq \epsilon$, we obtain the following result regarding the required number of iterations, the time scale $T$ and the order of the score-matching error, {\color{black} whose proof is postponed to Appendix \ref{app:proof-mixting-time}.}

\begin{cor}
\label{cor:PM}
    Let Assumptions \ref{ass:weak-convex}, \ref{ass:one-side-lipschitz}, \ref{ass:score-function-approximation} and \ref{ass:lipschitz-time} hold. 
    To achieve a given precision level $\epsilon>0$ in $\mathcal{W}_2$-distance, the required number of iterations of the PM method \eqref{eq:poisson-method}
    is of order $\tilde{\mathcal{O}}\big(\tfrac{\sqrt{d}}{\epsilon^{2/3}}\big)$, the batch size $k$ is of order $\tilde{\mathcal{O}}\big(\tfrac{\sqrt{d}h}{\epsilon}\big)$ for the discretization error to be small, the time scale $T$ is of order $\mathcal{O}\big(\log(\tfrac{\sqrt{d}}{\epsilon})\big)$ for the initialization error to be small, and the score-matching error $\epsilon_0$ is of order $\mathcal{O}\Big(\tfrac{\epsilon}{\log(\epsilon^{-1}\sqrt{d})}\Big)$ for the propagated score-matching error to be small.
\end{cor}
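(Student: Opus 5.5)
The plan is to substitute explicit choices of $T$, $\epsilon_0$, $h$ and $k$ into the bound of Theorem~\ref{thm:poisson} and check that each of the three error contributions is $\mathcal{O}(\epsilon)$. Fix $\lambda\in(0,1/4)$, say $\lambda=1/8$. The prefactor $C\mathrm{e}^{C\tilde\eta(\alpha,M,\rho_h)}$ must be uniform in $d$ and $h$. Every admissible $h$ lies below the fixed threshold $h_{\max}:=\frac{2(1-\lambda)}{C(L,\tilde L)}\wedge \frac{1}{C(L,\alpha)}$, which is independent of $d$. So the arguments $C(L,\tilde L)h/2+\lambda$ and $(3(2L+1)^2h/2+\lambda)\vee C(L,\alpha)h$ of $\eta$ stay in a compact range. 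I would then use that $\eta(\alpha,M,\cdot)$ from \eqref{eq:eta-def} is finite and nondecreasing in its last argument, which should be immediate from its definition. This gives $C\mathrm{e}^{C\tilde\eta}\le C'$ with $C'$ independent of $d,h,\epsilon$. From now on it suffices to make the bracket at most $\epsilon/C'$, taking $\epsilon\in(0,1)$.

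\emph{Initialization and score terms.} Since $\mu$ has finite second moment under Assumptions~\ref{ass:weak-convex}--\ref{ass:one-side-lipschitz}, the second moment of $\mu$ is $\mathcal{O}(d)$. Coupling $\mu$ with $\gamma^d$ independently then gives $\mathcal{W}_2(\mu,\gamma^d)\le C\sqrt d$. Choosing $T=\log(3C'C\sqrt d/\epsilon)=\mathcal{O}(\log(\sqrt d/\epsilon))$ makes $\mathrm{e}^{-T}\mathcal{W}_2(\mu,\gamma^d)\le\epsilon/(3C')$. Requiring $\epsilon_0\le \epsilon/(3C'T)$, which is of order $\epsilon/\log(\epsilon^{-1}\sqrt d)$, handles the score-matching term.

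\emph{Discretization term.} This is the only step needing a genuine balancing. Two of the four terms depend on $k$:
\[
T^3\tfrac{\sqrt d h}{k}
\quad\text{and}\quad
T^3 k d h^2 .
\]
I will choose $k\asymp T^3\sqrt d h/\epsilon$ (rounded up to an integer, and at least $1$), so that the first term is $\lesssim\epsilon$. Then the second term is $\asymp T^6 d^{3/2}h^3/\epsilon$. This is $\lesssim \epsilon$ exactly when $h\lesssim \epsilon^{2/3}/(T^2\sqrt d)$, so I take $h$ equal to this value, which also satisfies $h\le h_{\max}$ for small $\epsilon$. The remaining terms are then lower order:
\begin{itemize}
\item $T^3dh^2\asymp \epsilon^{4/3}/T\le\epsilon$;
\item $T^3\sqrt d h^{3/2}\asymp \epsilon\, d^{-1/4}\le \epsilon$, up to harmless polylog factors that can be absorbed into a slightly smaller constant in $h$.
\end{itemize}
With this $h$, the number of outer iterations is $N=T/h=\tilde{\mathcal{O}}(\sqrt d/\epsilon^{2/3})$. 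The batch size is $k=\tilde{\mathcal{O}}(\sqrt d h/\epsilon)$, as claimed.

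The point I expect to be most delicate is confirming that $\tilde\eta$ is uniformly bounded, i.e.\ the monotonicity and finiteness of $\eta$ in \eqref{eq:eta-def}. A second point is that the per-batch cost is an expected constant number of score evaluations, so the iteration count is indeed $N$ rather than $Nk$. The paper's complexity claim counts $N$; if the cost of the $k$ inner substeps were to be counted, I would note that $Nk=\tilde{\mathcal{O}}(\sqrt d\,T/\epsilon)$ inner Gaussian draws occur, but only $\mathcal{O}(N)$ expected score evaluations, since $\E[\sum_j B_{t_{n,j}}]=1$.
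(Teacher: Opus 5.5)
Your proposal is correct and follows the paper's own argument. You bound $\mathcal{W}_2(\mu,\gamma^d)\le\sqrt{(\mathcal M+1)d}$; the $\mathcal{O}(d)$ second moment comes from Lemma~\ref{lem:target-p-order-moment}, not from finiteness alone. You then take $T\asymp\log(\sqrt d/\epsilon)$ and $\epsilon_0\lesssim\epsilon/T$. Finally you choose $k\asymp\sqrt d\,h/\epsilon$ and $h\lesssim\epsilon^{2/3}/\sqrt d$, so that the $kdh^2$ term is controlled and $dh^2$ and $\sqrt d\,h^{3/2}$ are lower order.

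You are more careful than the paper in tracking the $T^3$ factor and the prefactor $\mathrm{e}^{C\tilde\eta}$. One correction: the prefactor is not uniformly bounded for every admissible $h$. At the step-size threshold the arguments of $\eta$ reach $1$ and \eqref{eq:eta-def} degenerates. The bound needs $h$ bounded away from that threshold, which your choice $h\to0$ guarantees.
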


\section{Technical Overview}
\label{sec:Technical-Overview}

\subsection{Regime shift property}
For simplicity, in the following we denote $b_t(x):= -x+2\nabla \log \tilde{p}_{T-t}(x)$. Then the backward SDE \eqref{eq:backward-SDE} can be rewritten as follows:
\begin{equation}
\mathrm{d} 
    \overleftarrow{X}_t
    =
    b_t(\overleftarrow{X}_t)
    \,
    \mathrm{d} t
    +
    \sqrt{2} 
    \,
    \mathrm{d} W_t,
    \quad 
    \overleftarrow{X}_0 \sim 
    \mathcal{L}(\overrightarrow{X}_T),
    \quad \quad 0\leq t \leq T.
\end{equation}
In this subsection, we revisit the regime shift results of $p_t$ and $b_t(x)$ established in \cite{Silveri2025beyond}, as they play a central role in proving the convergence guarantees of the two SGMs.

\textbf{Regime shift of $p_t$.}

\begin{itemize}
\item $p_t$ is only weak log-concave for $t \in \big[0, \tau(\alpha, M)\big ) $.

\item $p_t$ is log-concave for $t \in [ \tau(\alpha, M),T ] $.
\end{itemize}

\textbf{Regime shift of $b_t(x)$.}

\begin{itemize}
\item $b_t(x) $ is contractive for $t \in \big[0, \hat{T}(\alpha,M, 0)\big]$.

\item $b_t(x) $ is not (necessarily) contractive for $t \in [ \hat{T}(\alpha,M, 0),T ]$ (i.e., it is contractive in $\big[\hat{T}(\alpha,M,0),T^\star\big]$, while not contractive in $\big(T^\star,T\big]$).
\end{itemize}

Appendix \ref{appen:regime-shift} provides further details and the explicit formulas for $\tau(\alpha, M), \, \hat{T}(\alpha,M ,0)$ and $T^\star$. We mentioned that the introduction of $\hat{T}(\alpha,M ,0)$ as a lower bound on $T^\star$ is not a limitation of our proof technique but a direct consequence of the fact that $T^\star$ is not explicitly computable.

\subsection{Overview of proofs of Theorem \ref{thm:em} and Theorem \ref{thm:poisson}}
\label{subsec:Overview-proofs}

{\color{black}
We sketch the proofs of Theorem~\ref{thm:em} (EM) and Theorem
\ref{thm:poisson} (PM). 
{\color{black}
Before proceeding with the proof, we introduce the following two types of auxiliary process as used in \cite{Silveri2025beyond}.
}

\paragraph{Auxiliary processes.}
We introduce the following auxiliary processes,  for any $n \in [N]_0, \, i \in [k-1]$, 
\begin{itemize}
\item $\{X^N_{t_{n,i}}\}$: discretization of $\{\overleftarrow{X}_t\}_{0 \leq t \leq T}$ with 
$X^N_0 \sim \mathcal{L}(\overleftarrow{X}_0)$;
\item $\{X^\infty_{t_{n,i}}\}$: same discretization as $X^N$ but
with $X^\infty_0 \sim \gamma^d$.
\end{itemize}

For EM, both auxiliary processes follow
\begin{equation}
\label{eq:em-aux}
X^{\star}_{t_{n+1}}
= 
X^{\star}_{t_n}
+ h
b_{t_n}(X^{\star}_{t_n})
+ 
\sqrt{2}
\Delta W_{t_n},
\qquad
\star \in \{N,\infty\}.
\end{equation}
For PM, with refined sub-grid $t_{n,i} := t_n + ih/k$,
\begin{equation}
\label{eq:poisson-aux}
\begin{aligned}
\tilde X^{\star}_{t_{n,i}}
&= X^{\star}_{t_n}
+ 
\tfrac{ih}{k}
b_{t_n}(X^{\star}_{t_n})
+ \sqrt{\tfrac{2h}{k}} 
\sum_{j=0}^{i-1} Z_{t_{n,j}}, \\
X^{\star}_{t_{n,i+1}}
&= X^{\star}_{t_{n,i}}
+ \tfrac{h}{k}
b_{t_n}(X^{\star}_{t_n})
+ h
B_{t_{n,i}}
\Big(
b_{t_{n,i}}(\tilde X^{\star}_{t_{n,i}})
- 
b_{t_n}(X^{\star}_{t_n})
\Big)
+ 
\sqrt{\tfrac{2h}{k}}\, Z_{t_{n,i}},\\
X^{\star}_{{t_{n+1}}} 
& =
X^{\star}_{{t_{n+1},0}} 
=
X^{\star}_{t_{n,k}},
\quad
\star \in \{N,\infty\},
\end{aligned}
\end{equation}
where $\{Z_{t_{n,i}}\}$ are i.i.d.\ $\gamma^d$, $\{B_{t_{n,i}}\}$ are
i.i.d.\ $\mathrm{Bernoulli}(1/k)$, and the two families are mutually
independent. 

\paragraph{Error decomposition.}
By the triangle inequality, 
with $\mu = \mathcal{L}(\overleftarrow{X}_T)$,
\begin{equation}
\label{eq:error-decom}
\mathcal{W}_2
\bigl(
\mu, \mathcal{L}(Y_{t_N})
\bigr)
\leq
\underbrace{
\mathcal{W}_2
\bigl(
\mu, \mathcal{L}(X^N_{t_N})
\bigr)
}_{\mathcal{E}_{\mathrm{disc}}}
+ 
\underbrace{
\mathcal{W}_2
\bigl(
\mathcal{L}(X^N_{t_N}),
\mathcal{L}(X^\infty_{t_N})
\bigr)
}_{
\mathcal{E}_{\mathrm{init}}
}
+ 
\underbrace{
\mathcal{W}_2
\bigl(
\mathcal{L}(X^\infty_{t_N}),
\mathcal{L}(Y_{t_N})
\bigr)
}_{\mathcal{E}_{\mathrm{score}}
}.
\end{equation}
The initialization and score-approximation error, i.e., $\mathcal{E}_{\mathrm{init}}$ and $\mathcal{E}_{\mathrm{score}}$, are treated in the
same way for EM and PM methods, while the discretization term $\mathcal{E}_{\mathrm{disc}}$ is treated
separately.

\paragraph{Step 1: Discretization error $\mathcal{E}_{\mathrm{disc}}$.}
Combining with a general one-step recursion
(Proposition \ref{prop:Upper-mean-square error-bounds} for PM, and a
parallel statement for EM), 
and iterated with the regime-shift drift
bound (Proposition~\ref{prop:regime-shift-drift}) gives, 
\[
\mathcal{E}_{\mathrm{disc}}
=
\begin{cases}
\mathcal{O}\big(\sqrt{Td}\,h\big),
& \text{EM},\\[2pt]
\mathcal{O}\left(T^3\left(
\tfrac{\sqrt{d}\,h}{k}
+ \sqrt{k}d h^{2}
+ \sqrt{d}\,h^{3/2}
\right)\right),
& \text{PM}.
\end{cases}
\]
{\color{black}
Compared with \cite{Silveri2025beyond}, our 
analysis for EM improves the
discretization error to first order in \(h\), while retaining the same dimension dependence.
This is achieved through a
refined estimate of the cross terms in the $L^2$ one-step error recursion.
{\color{black}
This refined estimate isolates the martingale component of the one-step residual, which vanishes due to the basic property of the It\^o integral. }
For PM, we further exploit the rank-one structure of
the centered Bernoulli correction to obtain a sharper mean-square bound (Lemma \ref{lem:key-lem}), which yields the significant speed-up over EM.
{\color{black}
Further algorithmic and analytical intuition behind the improved PM bound, together with the corresponding technical coupling argument, is provided in Appendix \ref{app:B.1}.}


\paragraph{Step 2: Initialization error $\mathcal{E}_{\mathrm{init}}$.}
The processes $X^N$ and $X^\infty$ share the same numerical randomness
and differ only at $t_0$. Choosing
$(X^N_0,X^\infty_0)$ as the $\mathcal{W}_2$-optimal coupling
of $\mathcal{L}(\overleftarrow{X}_0)$ and $\gamma^d$,
\[
\mathcal{E}_{\mathrm{init}}^2
\leq
\mathbb{E}\bigl[\|X^N_{t_N} - X^\infty_{t_N}\|^2\bigr]
\leq
Ce^{-T}\,\mathcal{W}_2^2\bigl(\mu,\gamma^d\bigr),
\]
where the exponential factor comes from the contractivity of the
forward OU process \eqref{eq:forward-SDE}. 

\paragraph{Step 3: Score-approximation error $\mathcal{E}_{\mathrm{score}}$.}
Since
$X^\infty$ and $Y$ differ only in the score used inside the drift,
standard arguments yield
\[
\mathcal{E}_{\mathrm{score}}
\leq C\,T\epsilon_0,
\]
where $\epsilon_0$ is the score-approximation error in
Assumption~\ref{ass:score-function-approximation}.

Thus, for a target $\mathcal{W}_2$-accuracy $\epsilon$, the above
convergence rates give the improved iteration complexity
$\tilde{\mathcal{O}}(\sqrt{d}/\epsilon)$ for EM
(Corollary~\ref{cor:EM}) and the sharper iteration complexity
$\tilde{\mathcal{O}}(\sqrt{d}/\epsilon^{2/3})$ for PM
(Corollary~\ref{cor:PM}).
}

\section{Conclusion and future work}
\label{sec:Conclusion}
In this work, we provide enhanced iteration complexity in $\mathcal{W}_2$ distance for two discretization methods applied to SGMs. First, by carrying out more delicate error estimates, we derive an $\tilde{\mathcal{O}}(\sqrt{d}/\epsilon)$ iteration complexity for the EM-based SGM beyond log-concavity, significantly improving the existing order $\tilde{\mathcal{O}}(d/\epsilon^2)$ obtained in \cite{Silveri2025beyond}. Second, we establish an $\tilde{\mathcal{O}}(\sqrt{d}/\epsilon^{2/3})$ iteration complexity for the PM-based SGM in $\mathcal{W}_2$-distance, filling a theoretical guarantee left in \cite{kandasamy2024poisson} and improving upon existing state-of-the-art Wasserstein convergence guarantees for SDE-based diffusion samplers.

Several interesting avenues remain to be explored in our future works. First, we seek to develop new techniques that improve the dependence on $\alpha, M$ and $L_V$. The dependence we have established is exponential, whereas obtaining a polynomial dependence constitutes our future work. Second, we intend to build on this idea by developing more efficient sampling methods based on critically-damped Langevin diffusion, under weaker assumptions than weak log-concavity, and also for other metrics. {\color{black}Third,  we plan to extend our analysis to sampling on low-dimensional manifolds, seeking convergence bounds that depend on the intrinsic rather than the ambient dimension.
}

{
\small
\bibliography{ref}
\bibliographystyle{plain}
}

\appendix

\section{Propagation of the regularity assumption} 
\label{appen:regime-shift}
The aim of this section is to revisit some regime shift properties established in \cite{Silveri2025beyond}.
The following proposition shows how the constant of weak log-concavity propagates. 
The proof follows the similar argument of \cite[Lemma~5.9]{Conforti2025Projected} and is omitted.

\begin{prop}
[Propagation of weak log-concavity in time]
\label{prop:Propagation-weak-concavity} Let Assumption \ref{ass:weak-convex} hold. Then, for any $t\in[0,T]$, the function $-\log \tilde{p}_{T-t}(x)$ is weakly convex with weak convexity profile $\kappa_{- \log \tilde{p}_{T-t}}$ satisfying 
\begin{equation}
\kappa_{- \log \tilde{p}_{T-t}}(r)
\geq 
\frac{\alpha}
{\alpha + (1 - \alpha)e^{-2(T-t)}} - 1 - \frac{e^{-(T-t)}}{\alpha + (1 - \alpha)e^{-2(T-t)}} 
\frac{1}{r} 
f_M
\left(\tfrac{e^{-(T-t)}}{\alpha + (1 - \alpha)e^{-2(T-t)}} r\right),
\end{equation}
where $\alpha, M$ come from Assumption \ref{ass:weak-convex}.
Moreover, we have, for any $ x, y \in \mathbb{R}^d$,
\begin{equation}
\big\langle 
\nabla \log \tilde{p}_{T-t}(x) 
- \nabla \log \tilde{p}_{T-t}(y),
x - y
\big\rangle 
\leq 
-C_{T-t} 
\|x - y\|^2,
\end{equation}
where $C_t$ is defined as 
\begin{equation}
\label{eq:c-t-def}
C_t 
:= 
\frac{\alpha}{\alpha + (1 - \alpha)e^{-2t}} - 
\frac{e^{-2t}M}{(\alpha + (1 - \alpha)e^{-2t})^2} - 1.    
\end{equation}
\end{prop}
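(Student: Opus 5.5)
The plan is to write $p_s$ as a Gaussian convolution of a rescaled $\mu$ and propagate the weak convexity profile through this convolution. Fix $s:=T-t$. By \eqref{eq:solution_ou}, $p_s$ is the law of $e^{-s}\overrightarrow{X}_0+\sqrt{1-e^{-2s}}Z$. Hence
\[
p_s(x)\propto\int \exp\Big(-V(y)-\tfrac{\|x-e^{-s}y\|^2}{2(1-e^{-2s})}\Big)\,\mathrm{d}y .
\]
Differentiating under the integral (Tweedie's formula) gives
\[
\nabla\log p_s(x)=-\frac{x-e^{-s}\,\E[\overrightarrow{X}_0\mid \overrightarrow{X}_s=x]}{1-e^{-2s}} .
\]
The conditional law $\mu_{s,x}(\mathrm{d}y)\propto \exp\big(-V(y)-\tfrac{e^{-2s}\|y\|^2}{2(1-e^{-2s})}+\tfrac{e^{-s}\langle x,y\rangle}{1-e^{-2s}}\big)\mathrm{d}y$ is an exponential tilt, linear in $x$, of the fixed potential $U_s(y):=V(y)+\tfrac{e^{-2s}}{2(1-e^{-2s})}\|y\|^2$. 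By Assumption \ref{ass:weak-convex}, $U_s$ is $(\alpha+\tfrac{e^{-2s}}{1-e^{-2s}},M)$-weakly convex. This is the structure exploited in \cite[Lemma~5.9]{Conforti2025Projected}, and I will follow it.

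The key step is a weak-convexity preservation result for the map $x\mapsto \E_{\mu_{s,x}}[Y]$. I would show that if $U$ is $(\beta,M)$-weakly convex, then the log-partition function $\Lambda(\theta):=\log\int e^{\langle\theta,y\rangle-U(y)}\mathrm{d}y$ satisfies, for $\|\theta-\theta'\|=r$,
\[
\langle\nabla\Lambda(\theta)-\nabla\Lambda(\theta'),\theta-\theta'\rangle\le \tfrac{r^2}{\beta}+\tfrac{r}{\beta}\,\tfrac{1}{\beta}f_M\!\big(\tfrac{r}{\beta}\big).
\]
Note that $\nabla\Lambda(\theta)$ is the tilted mean. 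I would obtain this bound by a reflection/synchronous coupling of the two Langevin diffusions targeting $e^{\langle\theta,y\rangle-U}$ and $e^{\langle\theta',y\rangle-U}$. The constant tilt difference $\theta-\theta'$ acts as a drift offset. The profile $\beta-f_M(r)/r$ then yields a Lipschitz estimate on the difference of stationary means via a comparison ODE $\dot\rho=\|\theta-\theta'\|-\beta\rho+f_M(\rho)$. The hard part is evaluating the fixed point of this ODE. Matching the $\tanh$-form requires the specific identity $f_M$ satisfies ($f_M'=M-f_M^2/4$, so $f_M$ is preserved under such Riccati-type fixed points). This is exactly where the $\tanh$ profile is chosen, and I expect it to be the main obstacle. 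I would take it from \cite{Conforti2024Weaksemiconvexity} rather than re-derive it.

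Substituting $\theta=e^{-s}x/(1-e^{-2s})$ and $\beta=\alpha+\tfrac{e^{-2s}}{1-e^{-2s}}=\tfrac{\alpha+(1-\alpha)e^{-2s}}{1-e^{-2s}}$ into the Tweedie expression then gives
\[
-\kappa_{-\log p_s}(r)\le -\tfrac{1}{1-e^{-2s}}+\tfrac{e^{-2s}}{(1-e^{-2s})^2}\Big[\tfrac{1}{\beta}+\tfrac{1}{\beta r'}f_M(r')\Big],\quad r'=\tfrac{e^{-s}r}{(1-e^{-2s})\beta}=\tfrac{e^{-s}r}{\alpha+(1-\alpha)e^{-2s}}.
\]
The constant terms simplify algebraically to $-\tfrac{\alpha}{\alpha+(1-\alpha)e^{-2s}}$. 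Subtracting $1$, since $\nabla\log\tilde p_s=\nabla\log p_s+x$, yields the stated profile of $-\log\tilde p_{T-t}$.

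Finally, the second inequality follows from the bound $f_M(u)/u\le M$, which holds because $\tanh v\le v$. Applying it with $u=\tfrac{e^{-s}}{\alpha+(1-\alpha)e^{-2s}}r$ turns the $f_M$ term into at most $\tfrac{e^{-2s}M}{(\alpha+(1-\alpha)e^{-2s})^2}$ uniformly in $r$. Multiplying the profile inequality by $\|x-y\|^2$ then gives $\langle\nabla\log\tilde p_{T-t}(x)-\nabla\log\tilde p_{T-t}(y),x-y\rangle\le -C_{T-t}\|x-y\|^2$ with $C_t$ as in \eqref{eq:c-t-def}.
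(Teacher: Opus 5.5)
Your route is the right one: Tweedie's formula, the identification $\E[\overrightarrow{X}_0\mid\overrightarrow{X}_s=x]=\nabla\Lambda(\theta)$ with $\theta=e^{-s}x/(1-e^{-2s})$, where $\Lambda$ is the log-Laplace transform of the tilted potential $U_s$, and then a weak semiconcavity estimate for $\Lambda$ imported from Conforti. The paper gives no proof and only refers to an argument as in \cite[Lemma~5.9]{Conforti2025Projected}; your route is the standard way to fill that in. The shift by $1$ coming from $\tilde p_s=p_s/\gamma^d$ is correct, and so is your derivation of the second inequality via $f_M(u)\le Mu$.

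The step that fails is the key inequality itself. You state it with one factor $1/\beta$ too many in the $f_M$ term, and in that form it is false. Write $\rho:=\|\theta-\theta'\|$. The potential $U(c\,\cdot)$ is $(c^2\beta,c^2M)$-weakly convex, since $cf_M(cr)=f_{c^2M}(r)$, and its log-Laplace transform is $\Lambda(\cdot/c)-d\log c$. Applying your inequality to it at the points $c\theta,c\theta'$ therefore gives
\[
\langle\nabla\Lambda(\theta)-\nabla\Lambda(\theta'),\theta-\theta'\rangle\le\tfrac{\rho^2}{\beta}+\tfrac{\rho}{c^2\beta^2}\,f_M\big(\tfrac{\rho}{\beta}\big)\quad\text{for all }c>0 .
\]
Letting $c\to\infty$ yields $\nabla^2\Lambda\preceq\beta^{-1}I_d$, which is a Brascamp--Lieb bound with no log-concavity at all. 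This fails already for $\tfrac12\mathcal{N}(-me_1,\beta^{-1}I_d)+\tfrac12\mathcal{N}(me_1,\beta^{-1}I_d)$ with $m\neq0$: this mixture is $(\beta,M)$-weakly convex for a suitable $M$, and its variance in the direction $e_1$ is $\beta^{-1}+m^2$. The scale-consistent estimate, which is the one you need, is
\[
\langle\nabla\Lambda(\theta)-\nabla\Lambda(\theta'),\theta-\theta'\rangle\le\tfrac{\rho^2}{\beta}+\tfrac{\rho}{\beta}\,f_M\big(\tfrac{\rho}{\beta}\big).
\]

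Your substituted display is also inconsistent with your own lemma. Its $f_M$ term equals $\frac{e^{-s}}{(1-e^{-2s})r}f_M(r')$, which is $\beta$ times the required $\frac{e^{-s}}{(\alpha+(1-\alpha)e^{-2s})r}f_M(r')$. Your lemma, by contrast, would produce $\beta^{-1}$ times the required term. You verified only the constant term, so the claim that the display ``yields the stated profile'' holds for neither version you wrote down.

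With the corrected estimate the proof closes. Divide by $r^2$, using $\rho=e^{-s}r/(1-e^{-2s})$ and $\rho/\beta=r'$; this gives exactly the required $f_M$ term, and the rest of your argument goes through. The case $t=T$, where $1-e^{-2s}=0$, is immediate from Assumption~\ref{ass:weak-convex} and should be treated separately.

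The mechanism you sketch for the lemma would not give its sharp form, even with the bookkeeping fixed. The comparison ODE $\dot\varrho=\rho-\beta\varrho+f_M(\varrho)$ started at $0$ only yields $\|\nabla\Lambda(\theta)-\nabla\Lambda(\theta')\|\le\varrho^*$, where $\beta\varrho^*=\rho+f_M(\varrho^*)$. That puts $f_M(\varrho^*)$ in place of $f_M(\rho/\beta)$, which is strictly weaker because $\varrho^*>\rho/\beta$ and $f_M$ is increasing. Evaluating the profile at $\rho/\beta$ needs the finer coupling-by-reflection analysis of \cite{Conforti2024Weaksemiconvexity}. Citing that result is legitimate, since the paper itself does no more, but it has to be cited in the correct form.
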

As a direct consequence of $\nabla \log \tilde{p}_{t}(x)
=
x+\nabla \log {p}_{t}(x)
$ for any $x\in \mathbb{R}^d$ and $t \in [0,T]$, 
the weak
log-concavity constant of the density $p_t$ is $C_t+1$.
The estimate for \( (C_t + 1) \) is coherent with the intuition we have on the SGMs.
Such a constant starts from
\(C_0+1=\alpha-M\), corresponding to
the weak log-concavity constant
of
\(\mu\), and
converges to the 
\[
C_t + 1
=
\frac{\alpha}{\alpha+(1-\alpha)e^{-2t}}
-
\frac{e^{-2t} M}{(\alpha+(1-\alpha)e^{-2t})^2}
\longrightarrow 1
\quad \text{as } t\to\infty,
\]
which is the log-concavity constant of the standard Gaussian law \(\gamma^d\).

In what follows, 
we turn to the regime-shift property of $p_t$, which underlies the contraction estimates used in the proof of Theorem \ref{thm:em} and Theorem \ref{thm:poisson}

\begin{prop}
[Regime shift of $p_t$]
\label{prop:regime-shift}
Let Assumption \ref{ass:weak-convex} hold. For any $t \in [0,T]$, it holds that 
\begin{equation}
\begin{cases} 
p_t \text{ is weakly log-concave, } & t \in \big[0, \tau(\alpha, M) \big] \\ 
p_t \text{ is strongly log-concave, } & t \in \big(\tau(\alpha, M), T\big],
\end{cases}
\end{equation}
with
\begin{equation}
\tau(\alpha, M)
:=
\begin{cases}
\log \Big(
\sqrt{\tfrac{\alpha^2+M-\alpha}{\alpha^2}}
\Big)
\wedge T,  
\quad & \text{if} \quad \alpha-M <0,\\
0,    & \text{otherwise.} 
\quad 
\end{cases}
\end{equation}
\end{prop}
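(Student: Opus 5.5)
The plan is to read off the regime shift directly from Proposition \ref{prop:Propagation-weak-concavity}. Since $\nabla\log p_t(x)=\nabla\log\tilde p_t(x)-x$, the potential $-\log p_t$ has weak convexity profile bounded below by
\[
\kappa_{-\log p_t}(r)\ \ge\ \frac{\alpha}{\alpha+(1-\alpha)e^{-2t}}-\frac{e^{-t}}{\alpha+(1-\alpha)e^{-2t}}\,\frac1r\,f_M\!\Big(\tfrac{e^{-t}}{\alpha+(1-\alpha)e^{-2t}}r\Big).
\]
The right-hand side is of the form ``positive constant minus $\frac{1}{r}f_{M'}(r)$'' after rescaling. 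Hence $-\log p_t$ is $(\alpha_t,M_t)$-weakly convex for every $t$, which gives the first (weak) alternative at once. It remains to identify when $p_t$ is genuinely strongly log-concave.

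For strong log-concavity I will use the uniform lower bound $f_M(s)/s\le M$, which holds because $\tanh u\le u$. This gives $\kappa_{-\log p_t}(r)\ge C_t+1$ for all $r>0$, with $C_t$ as in \eqref{eq:c-t-def}. Writing $D_t:=\alpha+(1-\alpha)e^{-2t}>0$, the condition $C_t+1>0$ is equivalent to $\alpha D_t>Me^{-2t}$. Setting $u=e^{-2t}$, this reads
\[
\alpha^2+\alpha(1-\alpha)u-Mu>0, \qquad\text{i.e.}\qquad u\,(M-\alpha+\alpha^2)<\alpha^2 .
\]
I then split into cases.

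\textbf{Case $\alpha-M\ge 0$.} At $u=1$ the inequality is $\alpha>M$, and the left side is monotone in $u$. So $C_t+1\ge \alpha-M\ge0$ holds for all $t$, and we get (strong) log-concavity from $t=0$ on, consistent with $\tau=0$. The boundary equality $\alpha=M$ needs a remark, since there $C_0+1=0$ and one only has strictness for $t>0$.

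\textbf{Case $\alpha-M<0$.} Here $M-\alpha+\alpha^2>\alpha^2>0$, so the inequality becomes
\[
e^{2t}>\frac{\alpha^2+M-\alpha}{\alpha^2},
\]
that is, $t>\log\sqrt{(\alpha^2+M-\alpha)/\alpha^2}=\tau(\alpha,M)$. Capping at $T$ handles the case where this threshold exceeds the horizon.

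The only delicate point I anticipate is being careful that strong log-concavity is read off from the uniform bound $C_t+1$, not from the profile. On $[0,\tau]$ we only claim weak log-concavity, which already holds from the first step, so no converse is needed. I also need the sign of $D_t$ to be positive for $\alpha\in(0,\infty)$, which holds because $D_t$ is a convex combination of $\alpha$ and $1$ with weights $1-e^{-2t}$ and $e^{-2t}$.
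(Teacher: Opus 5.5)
Your argument is correct and is essentially the intended one. The paper omits this proof and defers to Silveri et al., but its surrounding remarks identify $C_t+1$ as the log-concavity constant of $p_t$ and $\tau(\alpha,M)$ as the time after which $C_t+1>0$, which is exactly what you derive from $f_M(s)\le Ms$ and the affine-in-$u=e^{-2t}$ inequality.

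One harmless slip: in the case $\alpha\ge M$, the intermediate claim $C_t+1\ge\alpha-M$ is false in general. For example, $\alpha=10$, $M=5$ gives $C_t+1\to1$ as $t\to\infty$. Your affine argument only needs, and does give, $\alpha D_t-Me^{-2t}>0$ for $t>0$. That is all the statement requires, since $\tau=0$ in that case.
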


{\color{black}
An interesting property
}
of Proposition \ref{prop:regime-shift} is that 
the law with density $p_t$
becomes strongly
log-concave after a finite amount of time, even if $p_0$ is only weakly log-concave. Indeed, when $\alpha-M<0$, we have that 
$\tfrac{\alpha^2+M-\alpha}{\alpha^2}>1$. Therefore, $\tau(\alpha, M)>0$ and two distinct regimes are present.
Intuitively, it
states that, if $\alpha-M\geq0$, i.e. if $p_0$ is strongly log-concave, then $p_t$ is guaranteed to remain strongly
log-concave in the whole interval $[0,T]$, which matches the property of \cite[Theorem 3.7]{Saumard2014Logconcavity}.

We introduce the time thresholds used to separate the contractive and non-contractive parts of the reverse dynamics. 
Denote by $T^\star$ the following time 
\begin{equation}
T^\star 
:= 
\inf
\big\{ t \in [0,T]:
2 \kappa_{-\log \tilde{p}_{T-t}}+1 \leq 0
\big\},
\end{equation}
defining $\inf \varnothing:=T$.
Also, denote by
\begin{equation}
\widehat{T}(\alpha, M, \rho):=\sup \left\{t \in[0, T]: 2 C_{T-t}+1 \geq \rho \right\},
\end{equation}
for $\rho \in[0,1)$, with $\inf \varnothing:=T$. 
Since the weak profile of $-\log \tilde{p_t}$ satisfies $\kappa_{- \log \tilde{p}_{t}}\geq C_t$, with $C_t$ being defined as in \eqref{eq:c-t-def}, we have that $\widehat{T}(\alpha, M, \rho) \leq T^\star$, for any $\rho \in[0,1)$. Next, we define 
\[
N_h
:=
\sup 
\big\{m \in\{0, \ldots, Nk\}: 
mh/k \leq \widehat{T}(\alpha, M, \rho)
= 
T-\eta(\alpha, M,\rho )\big\}, 
\]
with 
\begin{equation}
\begin{aligned}
\label{eq:eta-def}
&
    \eta(\alpha, M, \rho)\\
   & := 
    \begin{cases}
    \tfrac{1}{2} 
    \log 
    \Big(\tfrac{ M+ \rho\alpha(1-\alpha)}
    {(1-\rho)\alpha^2}
    +
    \sqrt{\tfrac{(1+\rho)(1-\alpha)^2}
    {(1-\rho)\alpha^2}
    +
    \big(\tfrac{M+\rho\alpha(1-\alpha)}
    {(1-\rho) \alpha^2}
    \big)^2}\Big) 
    \wedge
    T, 
    & \text { if } 2 \alpha-2 M-1<0 \\ 0, 
    & \text {otherwise.}
    \end{cases}
\end{aligned}
\end{equation}
Note that the condition
$$
\tfrac{ M+\rho\alpha(1-\alpha)}
{ (1-\rho)\alpha^2}
+
\sqrt{\tfrac{(1+\rho)(1-\alpha)^2}{(1-\rho)\alpha^2}
+
\big(\tfrac{ M+\rho\alpha(1-\alpha)}{(1-\rho)\alpha^2}\big)^2}>1
$$
{\color{black}
is equivalent to
}the condition $2 \alpha-2 M-1<0$. This means that $\widehat{T}(\alpha, M, \rho)$ is well-defined and $\widehat{T}(\alpha, M, \rho) \in[0, T]$. 
For clarity, we sketch the above regime shift of the drift $b_t$ of backward SDE \eqref{eq:backward-SDE} in the following Figure \ref{fig:contractive_interval} with the help of above preliminaries.

\begin{figure}[htbp]
    \centering
    \begin{tikzpicture}
        \draw[thick] (0,0) -- (13,0);
        
        \draw (0,0.1) -- (0,-0.1) node[below] {0};
        
        \draw (13,0.1) -- (13,-0.1) node[below] {$T$};

        \draw (5,0.1) -- (5,-0.1) node[below] {$N_h\cdot \tfrac{h}{k}$};
        
        \draw (7,0.1) -- (7,-0.1) node[below] {$\hat{T}(\alpha,M,\rho)$};
        
        \draw (10,0.1) -- (10,-0.1) node[below] {$T^\star$};
        
        \draw [decorate, decoration={brace, amplitude=8pt, mirror}, line width=1pt] 
            (0,-0.8) -- (7,-0.8);
        \node at (4,-1.3) {\text{contractive interval}};
        
        \draw [decorate, decoration={brace, amplitude=8pt}, line width=1pt] 
            (7,0.3) -- (13,0.3);
        \node at (10,0.8) {{\small$\eta(\alpha,M,\rho)$:\text{not (necessarily) contractive interval}}};
    \end{tikzpicture}
    \caption{Regime shift of the drift $b_{t}(x)$.}
    \label{fig:contractive_interval}
\end{figure}

\begin{lem}
[Lipschitz condition for the modified score]
\label{lem:score-lipschitz}
Let Assumptions \ref{ass:weak-convex} and \ref{ass:one-side-lipschitz} hold. Then, for any $t\in[0,T),$ it holds that 
\begin{equation}
    \sup_{x \in \mathbb{R}^d} 
    \|\nabla^2 \log \tilde{p}_{T-t}(x)\|
    \leq L_t 
    \leq L,
\end{equation}
where
\[
L_{t}
:= 
\Big( 
\Big(\tfrac{1}{1 - e^{-2(T-t)}} 
\wedge
e^{2(T-t)} L_V 
\Big)
\vee 
\big(
-(C_t + 1)
\big)
\Big)
+ 1, 
\quad 
L 
:= 
(1/\alpha \vee 1)(2 + L_V). 
\]
In particular, the modified score function \( x \mapsto \nabla \log \tilde{p}_t(x) \) is \( L \)-Lipschitz, i.e., for any $x, y \in \mathbb{R}^d$, 
\begin{equation}
\big\|
\nabla \log \tilde{p}_t(x) 
- 
\nabla \log \tilde{p}_t(y)
\big\| 
\leq 
L \|x - y\|. 
\end{equation}
\end{lem}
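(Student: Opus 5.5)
The plan is to work with $s:=T-t\in(0,T]$ and use $\nabla^2\log\tilde p_s=\nabla^2\log p_s+I_d$. It then suffices to show that the symmetric matrix $H_s(x):=\nabla^2\log p_s(x)$ satisfies a two-sided bound
\[
-\Big(\tfrac{1}{1-e^{-2s}}\wedge e^{2s}L_V\Big)I_d\ \preceq\ H_s(x)\ \preceq\ -(C_s+1)\,I_d .
\]
The operator norm of $H_s$ is then at most the larger of the two absolute endpoints. Adding the identity costs at most $1$, which gives $\|\nabla^2\log\tilde p_{T-t}\|\le L_t$. The Lipschitz statement follows from the mean value theorem once the Hessian bound holds.

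\emph{Upper bound on $H_s$.} By Proposition \ref{prop:Propagation-weak-concavity}, $-\log\tilde p_s$ satisfies $\langle\nabla\log\tilde p_s(x)-\nabla\log\tilde p_s(y),x-y\rangle\le -C_s\|x-y\|^2$. Taking $y=x+\varepsilon v$ and letting $\varepsilon\to0$ gives $\nabla^2\log\tilde p_s\preceq -C_s I_d$, i.e. $H_s\preceq-(C_s+1)I_d$.

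\emph{Lower bound on $H_s$.} I will derive two representations from the explicit formula \eqref{eq:solution_ou}, writing $\sigma_s^2=1-e^{-2s}$.
\begin{itemize}
\item The Gaussian-convolution (Tweedie) form gives $H_s=-\sigma_s^{-2}I_d+e^{-2s}\sigma_s^{-4}\mathrm{Cov}(X_0\mid X_s=x)\succeq-\sigma_s^{-2}I_d$.
\item Writing $p_s$ instead as a convolution of the rescaled target $e^{s}$-pushforward with a Gaussian and differentiating $\nabla\log p_s(x)=-e^{s}\,\E[\nabla V(\overrightarrow X_0)\mid \overrightarrow X_s=x]$ once more gives
\[
-H_s=e^{2s}\E[\nabla^2V\mid x]-e^{2s}\mathrm{Cov}(\nabla V\mid x)\preceq e^{2s}L_V I_d .
\]
Here Assumption \ref{ass:one-side-lipschitz} is used in the form $\nabla^2V\preceq L_VI_d$, and the covariance term is dropped since it is nonnegative.
\end{itemize}
Taking the better of the two lower bounds gives exactly the first term of $L_t$. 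The regularity needed to differentiate under the integral should follow from the Gaussian smoothing, working with a mollification of $V$ if necessary.

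\emph{Uniform constant $L$.} For the minimum, split according to whether $e^{2s}\le2$ or not. If $e^{2s}\le2$, then $e^{2s}L_V\le2L_V$; otherwise $\sigma_s^{-2}\le2$. In both cases the minimum is at most $2+L_V$. For the term $-(C_s+1)$, the denominator $D_s=\alpha+(1-\alpha)e^{-2s}$ satisfies $D_s\ge\alpha\wedge1$, since it is a convex combination of $\alpha$ and $1$. This is where the factor $(1/\alpha\vee1)$ should enter.

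The main obstacle I expect is that $-(C_s+1)=M e^{-2s}/D_s^2-\alpha/D_s$ involves $M$, which does not appear in $L$. So I cannot simply bound $-(C_s+1)$ by itself. Instead I will use that $H_s$ is also bounded below by $-\big(\tfrac{1}{1-e^{-2s}}\wedge e^{2s}L_V\big)I_d$, so only the positive part of the upper endpoint matters for $\|H_s\|$. I will then try to control $\lambda_{\max}(H_s)$ directly through the covariance form $e^{-2s}\sigma_s^{-4}\mathrm{Cov}(X_0\mid x)-\sigma_s^{-2}$. The idea is to bound the posterior covariance using the one-sided Lipschitz and propagated weak-convexity information, after rescaling by $D_s\ge\alpha\wedge1$, and aim for a bound of the form $(1/\alpha\vee1)(1+L_V)$. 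If this does not close, the fallback is to keep $L:=\sup_{t}L_t$, which is finite by the explicit formula, and note that it is independent of $d$; that independence is all that the later theorems require.
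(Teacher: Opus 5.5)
Your derivation of the two-sided bound
\[
-\Big(\tfrac{1}{1-e^{-2s}}\wedge e^{2s}L_V\Big)I_d\preceq\nabla^2\log p_s\preceq-(C_s+1)\,I_d,\qquad s=T-t,
\]
is correct, and it is the standard argument for this lemma. The paper gives no proof of its own and defers to \cite[Proposition~B.4]{Silveri2025beyond}. You use propagated weak convexity for the upper side. For the two lower sides you use the Tweedie covariance identity and the rescaled-convolution identity $\nabla\log p_s(x)=-e^{s}\,\E[\nabla V(\overrightarrow{X}_0)\mid\overrightarrow{X}_s=x]$. Note that this produces $L_t$ with $C_{T-t}$ rather than the $C_t$ printed in the statement; your indexing is the correct one. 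Regularity is also not a concern. Since $f_M(r)/r\le M$, weak convexity gives $\langle\nabla V(x)-\nabla V(y),x-y\rangle\ge(\alpha-M)\|x-y\|^2$. Together with Assumption~\ref{ass:one-side-lipschitz}, this makes $\nabla V$ globally Lipschitz.

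The genuine gap is the uniform bound $L_t\le L=(1/\alpha\vee1)(2+L_V)$. Your primary plan for it, bounding $\lambda_{\max}(\nabla^2\log p_s)$ by something like $(1/\alpha\vee1)(1+L_V)$ via the posterior covariance, cannot succeed, because no $M$-free bound exists.

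Take $d=1$ and $\mu=\tfrac12\mathcal{N}(-a,1)+\tfrac12\mathcal{N}(a,1)$, so $V(x)=\tfrac{x^2}{2}-\log\cosh(ax)+\mathrm{const}$.
\begin{itemize}
\item Then $V''=1-a^2\,\mathrm{sech}^2(ax)\le1$, so $L_V=1$.
\item The inequality $|\tanh u-\tanh v|\le2\tanh(|u-v|/2)$ gives $\tfrac{V'(x)-V'(y)}{x-y}\ge1-f_{a^2}(|x-y|)/|x-y|$. So $V$ is $(1,a^2)$-weakly convex, and hence $L=3$ for every $a$.
\item The OU flow preserves unit variances, so $\log\tilde p_s(x)=\log\cosh(ae^{-s}x)+\mathrm{const}$.
\item Therefore $\sup_x|\partial_x^2\log\tilde p_s(x)|=a^2e^{-2s}>3$ whenever $s<\tfrac12\log(a^2/3)$.
\end{itemize}
Even at the level of the explicit constants the claim fails. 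The correctly indexed $L_{T-s}$ tends to $(L_V\vee(M-\alpha))+1$ as $s\downarrow0$, which equals $a^2$ here. Moreover, since $f_M(r)/r$ is increasing in $M$, Assumption~\ref{ass:weak-convex} stays true when $M$ is enlarged with $\alpha$ and $L_V$ fixed.

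So the second inequality and the ``in particular'' clause are false as stated, and no argument can close your last step. What is provable is exactly your fallback $L:=\sup_tL_t$, which is $d$-independent but $M$-dependent, e.g.
\[
L\le1+\Big((1+L_V)\vee M(1/\alpha\vee1)^2\Big).
\]
This uses three facts:
\begin{itemize}
\item $D_s\ge\alpha\wedge1$;
\item $e^{-2s}\le1$;
\item $\sup_{s>0}\big(\tfrac{1}{1-e^{-2s}}\wedge e^{2s}L_V\big)=1+L_V$, attained at $e^{2s}=1+1/L_V$.
\end{itemize}
That last identity also repairs a slip in your case split. When $e^{2s}\le2$ you only get $e^{2s}L_V\le2L_V$, which exceeds $2+L_V$ once $L_V>2$.
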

See \cite[Proposition B.4]{Silveri2025beyond} for a similar proof of this lemma. The following proposition establishes a regime shift result of the drift $b_{t}(x)$ of the time-reversal process.

\begin{prop}
[Regime shift of $b_{t}(x)$]
\label{prop:regime-shift-drift}
Let Assumptions \ref{ass:weak-convex} and \ref{ass:one-side-lipschitz} hold. It holds that, for any $x, y \in \mathbb{R}^d,$ and $t \in[0,T]$,
\begin{equation}
\big\langle
b_{t}(x)-b_{t}(y),x-y
\big\rangle 
\leq
G_{t}\|x-y\|^2, 
\end{equation}
with 
\[
G_{t} :=
\begin{cases}
-(2C_{T-t} +1), 
\quad  &  t \in  \big[0,\widehat{T}(\alpha, M, \rho)\big],\\
2L+1, &  t \in  \big(\widehat{T}(\alpha, M, \rho),T\big].
\end{cases}
\]
\end{prop}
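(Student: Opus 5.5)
The plan is to write $b_t(x) = -x + 2\nabla\log\tilde p_{T-t}(x)$ and use linearity of the inner product:
\[
\langle b_t(x)-b_t(y),x-y\rangle = -\|x-y\|^2 + 2\langle \nabla\log\tilde p_{T-t}(x)-\nabla\log\tilde p_{T-t}(y),x-y\rangle .
\]
This reduces the whole statement to bounding the second term from above. We then treat the two time regimes separately.

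On the whole interval $[0,T]$, Proposition \ref{prop:Propagation-weak-concavity} gives the one-sided bound
\[
\langle \nabla\log\tilde p_{T-t}(x)-\nabla\log\tilde p_{T-t}(y),x-y\rangle \le -C_{T-t}\|x-y\|^2 .
\]
Substituting into the identity yields $\langle b_t(x)-b_t(y),x-y\rangle\le -(2C_{T-t}+1)\|x-y\|^2$ for every $t$. In particular this holds for $t\in[0,\widehat T(\alpha,M,\rho)]$, which is the first case of $G_t$. By definition of $\widehat T(\alpha,M,\rho)$ (as a supremum, using the monotonicity of $t\mapsto C_{T-t}$), on this interval we moreover have $2C_{T-t}+1\ge\rho\ge0$. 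So the bound there is genuinely contractive, matching Figure \ref{fig:contractive_interval}. This monotonicity is only needed for the interpretation, not for the inequality itself.

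For $t\in(\widehat T(\alpha,M,\rho),T]$, we instead use Lemma \ref{lem:score-lipschitz}. The map $x\mapsto\nabla\log\tilde p_{T-t}(x)$ is $L$-Lipschitz, so by Cauchy--Schwarz
\[
2\langle \nabla\log\tilde p_{T-t}(x)-\nabla\log\tilde p_{T-t}(y),x-y\rangle\le 2L\|x-y\|^2 .
\]
Combined with the identity this gives $\langle b_t(x)-b_t(y),x-y\rangle\le (2L-1)\|x-y\|^2\le(2L+1)\|x-y\|^2$. This is the second case, with room to spare; the stated constant $2L+1$ is presumably chosen because it also bounds the Lipschitz constant of $b_t$ itself, which is used in the step-size conditions later.

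The only delicate point is the endpoint $t=T$, where Lemma \ref{lem:score-lipschitz} is stated for $t\in[0,T)$. I would handle it either by continuity of $\nabla\log\tilde p_s$ as $s\downarrow0$, or directly: at $s=0$ we have $\nabla^2\log\tilde p_0=-\nabla^2V+I$, and Assumption \ref{ass:one-side-lipschitz} together with the weak convexity bound $\kappa_V\ge\alpha-\sqrt{M}\cdot$ (bounded) controls the one-sided quantity. Since only an upper one-sided bound is needed, the Lipschitz bound at $s=0$ is not even required. Indeed, Proposition \ref{prop:Propagation-weak-concavity} already covers $t=T$ with $-(2C_0+1)=2M-2\alpha+1$. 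This is at most $2L+1$ provided $M-\alpha\le L$, which follows from $L\ge L_t\ge -(C_t+1)$ evaluated at $t=T$.
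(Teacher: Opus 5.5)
Your argument is correct, and it is the argument the paper leaves implicit; the paper states this proposition without proof, right after Lemma~\ref{lem:score-lipschitz}. Proposition~\ref{prop:Propagation-weak-concavity} gives the bound $-(2C_{T-t}+1)$ for every $t\in[0,T]$, and the $L$-Lipschitz bound with Cauchy--Schwarz covers $t>\widehat{T}(\alpha,M,\rho)$. There you even obtain $2L-1$, whereas the paper's $2L+1$ is simply the Lipschitz constant of $b_t$ that it uses throughout.

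Your fix at $t=T$ is valid given the paper's stated results, but $M-\alpha\le L$ does not follow from the assumptions themselves. It comes only from the lemma's claim $L_t\le L$, through the term $-(C_0+1)=M-\alpha$. Enlarging $M$ preserves Assumption~\ref{ass:weak-convex}, while $L=(1/\alpha\vee 1)(2+L_V)$ does not depend on $M$, so that claim is only safe if $L$ is read as $\sup_t L_t$.
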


\section{Bound on discretization error}

\subsection{Discretization error of PM method}
\label{app:B.1}
First we prove the following proposition, describing the error propagation between the PM auxiliary process \eqref{eq:poisson-aux} and the backward process SDE \eqref{eq:backward-SDE}.

\begin{prop}
\label{prop:Upper-mean-square error-bounds}
Suppose that Assumptions \ref{ass:weak-convex} and \ref{ass:one-side-lipschitz} hold. 
Let $\{\overleftarrow{X}_t\}_{t \in [0,T]}$ be the solution of SDE \eqref{eq:backward-SDE}. 
For $n \in [N-1]_0$ and $i\in[k-1]_0$, denote
\begin{equation}
\label{eq:defination-delta-w}
\Delta W_{t_{n,i}}:=W(t_{n,i+1})-W(t_{n,i}),
\qquad
\xi_{t_{n,i}}:=\sqrt{k/h}\Delta W_{t_{n,i}}.
\end{equation}
Define the local drift residual
\begin{equation}
\label{eq:defination-R_n}
R_{t_{n,i}}
:= 
\int_{t_{n,i}}^{t_{n,i+1}}
\big[
b_s(\overleftarrow{X}_s)
-
b_{t_{n,i}}(\overleftarrow{X}_{t_{n,i}})
\big]
\,
\mathrm{d} s,
\end{equation}
and the random perturbation
\begin{equation}
\label{eq:defination-M_n}
\begin{aligned}
M_{t_{n,i}}
& :=
-
\sqrt{\tfrac{2h}{k}} 
\bigg[
Z_{t_{n,i}} - \xi_{t_{n,i}} 
+ 
\sqrt{\tfrac{hk}{2}} 
\big( B_{t_{n,i}} - 1/k \big)
\Big( 
b_{t_{n,i}}(\tilde{X}_{t_{n,i}}^N)
-
b_{t_n}(X_{t_n}^N)
\Big) \\
& \quad \quad +
\sqrt{\tfrac{h}{2k}}
\Big( 
b_{t_{n,i}}(\tilde{X}_{t_{n,i}}^N)
-
b_{t_{n,i}}(X_{t_{n,i}}^N)
\Big)
\bigg]. 
\end{aligned}
\end{equation}
Then, for any $\lambda \in (0,1/4)$, we have
\begin{equation}
\begin{aligned}
\mathbb{E}
\Big[
\big\|\overleftarrow{X}_{t_{n,i+1}}
-
X^N_{t_{n,i+1}}
\big\|^2
\Big]
& \leq
\gamma_{t_{n,i}} 
\mathbb{\,E}
\Big[
\big\|
\overleftarrow{X}_{t_{n,i}}
-
X^N_{t_{n,i}}
\big\|^2
\Big]
+
\tfrac{k}
{\lambda h} 
\mathbb{E}
\Big[
\big\|
\mathbb{E}
\big[
R_{t_{n,i}}
\mid 
\mathcal{F}_{t_{n,i}}
\big]
\big\|^2
\Big]\\
& \quad
+
3
\,
\mathbb{E}
\big[
\|
R_{t_{n,i}}\|^2
\big]
+
3
\,
\mathbb{E}
\big[
\|
M_{t_{n,i}}\|^2
\big]
+
\tfrac{(2 L+1)^2h}{\lambda k}
\mathbb{E}
\Big[
\big\|
X_{t_{n,i}}^N
-
\tilde{X}_{t_{n,i}}^N
\big\|^2
\Big],
\end{aligned}
\end{equation}
where 
$L$ comes from Lemma \ref{lem:score-lipschitz} and
\begin{align*}
\gamma_{t_{n,i}}
: =
\begin{cases}
1+
\Big( 
\tfrac{3(2L+1)^2h}{k}-2(2 C_{T-{t_{n,i}}}+1)+2\lambda
\Big)
\tfrac{h}{k},
& \text { for } nk+i\leq N_h 
\\
1+
\big(
{\tfrac{3(2L+1)^2h}{k}}
+2(2L+1)+ 2\lambda\big)
\tfrac{h}{k},
& \text { for } nk+i> N_h. 
\end{cases}
\end{align*}

\end{prop}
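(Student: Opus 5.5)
Set $e_{t_{n,i}}:=\overleftarrow{X}_{t_{n,i}}-X^N_{t_{n,i}}$. The plan is to write the one-step error recursion exactly and then expand the square, controlling the cross terms by conditioning on $\mathcal{F}_{t_{n,i}}$.

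\textbf{Step 1: the exact error equation.} Throughout, $\overleftarrow{X}$ and $X^N$ are coupled through a single $W$. The Gaussians $Z_{t_{n,j}}$ of the scheme are chosen jointly with the increments $\xi_{t_{n,j}}$, independent of $\mathcal{F}_{t_{n,i}}$ for $j\ge i$. The Bernoulli variables $B_{t_{n,j}}$ are independent of everything else. The exact solution satisfies
\[
\overleftarrow{X}_{t_{n,i+1}}=\overleftarrow{X}_{t_{n,i}}+\tfrac{h}{k}b_{t_{n,i}}(\overleftarrow{X}_{t_{n,i}})+R_{t_{n,i}}+\sqrt{2}\,\Delta W_{t_{n,i}} .
\]
In the scheme \eqref{eq:poisson-aux}, I split $hB_{t_{n,i}}=\tfrac hk+h(B_{t_{n,i}}-\tfrac1k)$. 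The $\tfrac hk b_{t_n}(X^N_{t_n})$ terms then cancel, and the increment becomes
\[
\tfrac hk b_{t_{n,i}}(X^N_{t_{n,i}})+\tfrac hk\big(b_{t_{n,i}}(\tilde X^N_{t_{n,i}})-b_{t_{n,i}}(X^N_{t_{n,i}})\big)+h(B_{t_{n,i}}-\tfrac1k)\big(b_{t_{n,i}}(\tilde X^N_{t_{n,i}})-b_{t_n}(X^N_{t_n})\big)+\sqrt{\tfrac{2h}{k}}Z_{t_{n,i}}.
\]
Using $\sqrt2\Delta W_{t_{n,i}}=\sqrt{2h/k}\,\xi_{t_{n,i}}$ and subtracting, I expect to get exactly
\[
e_{t_{n,i+1}}=e_{t_{n,i}}+\tfrac hk\Delta b+R_{t_{n,i}}+M_{t_{n,i}},\qquad \Delta b:=b_{t_{n,i}}(\overleftarrow{X}_{t_{n,i}})-b_{t_{n,i}}(X^N_{t_{n,i}}),
\]
with $M_{t_{n,i}}$ as in \eqref{eq:defination-M_n}. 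This is a bookkeeping check of the prefactors $\sqrt{2h/k}\cdot\sqrt{hk/2}=h$ and $\sqrt{2h/k}\cdot\sqrt{h/(2k)}=h/k$.

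\textbf{Step 2: expand the square.}
\[
\|e_{t_{n,i+1}}\|^2=\|e\|^2+2\tfrac hk\langle e,\Delta b\rangle+2\langle e,R\rangle+2\langle e,M\rangle+\big\|\tfrac hk\Delta b+R+M\big\|^2 .
\]
The terms are handled as follows.

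\emph{Last term.} By $(a+b+c)^2\le3(a^2+b^2+c^2)$, and since $b_t$ is $(2L+1)$-Lipschitz by Lemma \ref{lem:score-lipschitz}, it is at most $3(2L+1)^2\tfrac{h^2}{k^2}\|e\|^2+3\|R\|^2+3\|M\|^2$.

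\emph{Drift term.} Proposition \ref{prop:regime-shift-drift} gives $\langle e,\Delta b\rangle\le G_{t_{n,i}}\|e\|^2$. Since $nk+i\le N_h$ means $t_{n,i}\le\widehat T(\alpha,M,\rho)$, we have $G_{t_{n,i}}=-(2C_{T-t_{n,i}}+1)$ in the first case and $2L+1$ otherwise. This produces the two branches of $\gamma_{t_{n,i}}$.

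\emph{Residual term.} Since $e_{t_{n,i}}$ is $\mathcal{F}_{t_{n,i}}$-measurable, $\mathbb{E}\langle e,R\rangle=\mathbb{E}\langle e,\mathbb{E}[R\mid\mathcal{F}_{t_{n,i}}]\rangle$. Young's inequality then bounds $2\mathbb{E}\langle e,R\rangle$ by $\lambda\tfrac hk\mathbb{E}\|e\|^2+\tfrac{k}{\lambda h}\mathbb{E}\|\mathbb{E}[R\mid\mathcal{F}_{t_{n,i}}]\|^2$.

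\textbf{Step 3: the martingale part of $M$.} This is the key step, and where I expect the main care to be needed. The components $Z_{t_{n,i}}-\xi_{t_{n,i}}$ and $(B_{t_{n,i}}-\tfrac1k)\big(b_{t_{n,i}}(\tilde X^N_{t_{n,i}})-b_{t_n}(X^N_{t_n})\big)$ have zero conditional mean given $\sigma\big(\mathcal{F}_{t_{n,i}},Z_{t_{n,j}},B_{t_{n,j}},\,j<i\big)$. For the first, $Z_{t_{n,i}}$ and $\xi_{t_{n,i}}$ are centered and independent of the past. For the second, $B_{t_{n,i}}$ is independent of $\tilde X^N_{t_{n,i}}$, which depends only on $Z_{t_{n,j}}$ with $j<i$. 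Hence both components are orthogonal to $e_{t_{n,i}}$, and the cross terms with $e$ vanish.

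Checking the measurability assumptions of the coupling is the delicate point. In particular, $e_{t_{n,i}}$ must be measurable with respect to a $\sigma$-field that $Z_{t_{n,i}},\xi_{t_{n,i}},B_{t_{n,i}}$ are independent of. I would define the filtration explicitly to include the scheme's past noises, so that this holds.

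The remaining piece of $M$ equals $-\tfrac hk\big(b_{t_{n,i}}(\tilde X^N_{t_{n,i}})-b_{t_{n,i}}(X^N_{t_{n,i}})\big)$. Young's inequality and the Lipschitz bound give
\[
2\,\mathbb{E}\langle e,\cdot\rangle\le\lambda\tfrac hk\mathbb{E}\|e\|^2+\tfrac{(2L+1)^2h}{\lambda k}\mathbb{E}\|X^N_{t_{n,i}}-\tilde X^N_{t_{n,i}}\|^2 .
\]

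\textbf{Step 4: collect terms.} Collecting the coefficients of $\mathbb{E}\|e\|^2$ gives
\[
1+\Big(\tfrac{3(2L+1)^2h}{k}+2G_{t_{n,i}}+2\lambda\Big)\tfrac hk=\gamma_{t_{n,i}},
\]
and the remaining terms are exactly those in the stated inequality.
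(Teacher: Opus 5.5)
Your proposal is correct and follows essentially the same route as the paper. Both use the one-step identity $e_{t_{n,i+1}}=e_{t_{n,i}}+\tfrac hk\Delta b+R_{t_{n,i}}+M_{t_{n,i}}$, Proposition~\ref{prop:regime-shift-drift} for the drift cross term, and conditioning on $\mathcal{F}_{t_{n,i}}$ with Young's inequality for the $R$ and $M$ cross terms, where the centered Gaussian/Bernoulli part of $M$ drops out. The differences are cosmetic: you bound $\|\tfrac hk\Delta b+R+M\|^2\le 3(\cdots)$ in one step where the paper applies Young to each pairwise cross term (the resulting constants $3(2L+1)^2h^2/k^2$, $3$, $3$ are the same), and you are more explicit than the paper that the filtration must contain the scheme's past noises so that $Z_{t_{n,i}},\xi_{t_{n,i}},B_{t_{n,i}}$ are independent of it.
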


\begin{proof}
For any 
$n \in [N], \, i \in [k-1]_0,$
it follows from the definition \eqref{eq:backward-SDE} that
\begin{equation}
\label{eq:rewritten-exact-X}
\overleftarrow{X}_{t_{n,i+1}}
=
\overleftarrow{X}_{t_{n,i}}
+
\tfrac{h}{k}
b_{t_{n,i}}(\overleftarrow{X}_{t_{n,i}})
+
\sqrt{2}
\Delta W_{t_{n,i}}
+
R_{t_{n,i}},
\end{equation}
where $\Delta W_{t_{n,i}}:=W(t_{n,i+1})-W(t_{n,i})$ and  
$R_{t_{n,i}}$ is defined as \eqref{eq:defination-R_n}.
Next, we rewrite \eqref{eq:poisson-aux} as follows:
\begin{equation}
\label{eq:x_0_x_T}
\begin{aligned}
X_{t_{n,i+1}}^N 
& = 
X_{t_{n,i}}^N
+ 
\tfrac{h}{k} 
b_{t_{n,i}}(X_{t_{n,i}}^N) 
+ 
\sqrt{2}
\Delta W_{t_{n,i}}\\
& \quad +
\sqrt{\tfrac{2h}{k}} 
\bigg[
Z_{t_{n,i}} - \xi_{t_{n,i}} 
+ 
\sqrt{\tfrac{hk}{2}} 
\big( B_{t_{n,i}} - 1/k \big)
\Big( 
b_{t_{n,i}}(\tilde{X}_{t_{n,i}}^N)
-
b_{t_n}(X_{t_n}^N)
\Big) \\
& \quad \quad +
\sqrt{\tfrac{h}{2k}}
\Big( 
b_{t_{n,i}}(\tilde{X}_{t_{n,i}}^N)
-
b_{t_{n,i}}(X_{t_{n,i}}^N)
\Big)
\bigg].
\end{aligned}
\end{equation}
Here, we denote $\sqrt{\frac{2h}{k}}\xi_{t_{n,i}}:= \sqrt{2}
\Delta W_{t_{n,i}} $ for sake of notation.
Subtracting \eqref{eq:x_0_x_T} from \eqref{eq:rewritten-exact-X} yields
\begin{equation}
\label{eq:x_n-y_n}
\begin{aligned}
\overleftarrow{X}_{t_{n,i+1}}
-
X^N_{t_{n,i+1}}
& =  
\overleftarrow{X}_{t_{n,i}}
-
X^N_{t_{n,i}}
+ 
\tfrac{h}{k}
\Big(
b_{t_{n,i}}
(\overleftarrow{X}_{t_{n,i}})
-
b_{t_{n,i}}
(X^N_{t_{n,i}})
\Big)
+
R_{t_{n,i}}\\
& \quad -
\sqrt{\tfrac{2h}{k}} 
\bigg[
Z_{t_{n,i}} - \xi_{t_{n,i}} 
+ 
\sqrt{\tfrac{hk}{2}} 
\big( B_{t_{n,i}} - 1/k \big)
\Big( 
b_{t_{n,i}}(\tilde{X}_{t_{n,i}}^N)
-
b_{t_n}(X_{t_n}^N)
\Big) \\
& \quad \quad +
\sqrt{\tfrac{h}{2k}}
\Big( 
b_{t_{n,i}}(\tilde{X}_{t_{n,i}}^N)
-
b_{t_{n,i}}(X_{t_{n,i}}^N)
\Big)
\bigg].
\end{aligned}
\end{equation}
For the sake of convenience in notation, we denote, for any $n \in [N]_0, 
\, i \in [k-1]_0$,
\begin{equation}
\varepsilon_{t_{n,i}}
:= 
\overleftarrow{X}_{t_{n,i}}-X^N_{t_{n,i}},
\end{equation}

and recall $M_{t_{n,i}}$ from \eqref{eq:defination-M_n}.
Squaring both sides of \eqref{eq:x_n-y_n} and taking expectations shows, 
\begin{equation}
\begin{aligned}
\label{eq:x_n-y_n-square-expectation}
\mathbb{E}
\big[
\|\varepsilon_{t_{n,i+1}}\|^2
\big]
& =
\mathbb{E}
\big[
\|\varepsilon_{t_{n,i}}\|^2
\big]
+ 
\tfrac{h^2}{k^2}
\mathbb{\,E}
\Big[
\big\|
b_{t_{n,i}}
(\overleftarrow{X}_{t_{n,i}})
-
b_{t_{n,i}}
(X^N_{t_{n,i}})
\big\|^2
\Big]
+
\mathbb{E}
\Big[
\big\|
R_{t_{n,i}}
\big\|^2
\Big]\\
& \quad
+
\mathbb{E}
\Big[
\big\|
M_{t_{n,i}}
\big\|^2
\Big]
+
\tfrac{2h}{k}
\mathbb{\,E} 
\Big[
\big  \langle
\varepsilon_{t_{n,i}},
b_{t_{n,i}}
(\overleftarrow{X}_{t_{n,i}})
-
b_{t_{n,i}}
(X^N_{t_{n,i}})
\big \rangle
\Big]
+
2
\mathbb{\,E} 
\Big[
\big  \langle
\varepsilon_{t_{n,i}},
R_{t_{n,i}}
\big \rangle
\Big]\\
& \quad
+
2
\,
\mathbb{\,E} 
\Big[
\big  \langle
\varepsilon_{t_{n,i}},
M_{t_{n,i}}
\big \rangle
\Big]
+
\tfrac{2h}{k}
\mathbb{\,E} 
\Big[
\big \langle
b_{t_{n,i}}
(\overleftarrow{X}_{t_{n,i}})
-
b_{t_{n,i}}
(X^N_{t_{n,i}}),
R_{t_{n,i}}
\big\rangle
                                                                                                                                                                                                                                                                                                                                                                                                                                                                                                                                                                                         \Big]
                                                                                                                                                                                                                                                                                                                                                                                                                                                                                                                                                                                        \\
& \quad +
\tfrac{2h}{k}
\mathbb{\,E} 
\Big[
\big \langle
b_{t_{n,i}}
(\overleftarrow{X}_{t_{n,i}})
-
b_{t_{n,i}}
(X^N_{t_{n,i}}),
M_{t_{n,i}}
\big\rangle
\Big]
+
2
\,
\mathbb{\,E} 
\Big[
\big \langle
R_{t_{n,i}},
M_{t_{n,i}}
\big\rangle
\Big].
\end{aligned}
\end{equation}
For the estimate of the fifth term of \eqref{eq:x_n-y_n-square-expectation}, using Proposition \ref{prop:regime-shift-drift} yields,
\begin{equation}
\label{eq:cross-term-est}
\tfrac{2h}{k}
\mathbb{\,E} 
\Big[
\big  \langle
\varepsilon_{t_{n,i}},
b_{t_{n,i}}
(\overleftarrow{X}_{t_{n,i}})
-
b_{t_{n,i}}
(X^N_{t_{n,i}})
\big \rangle
\Big]
\leq 
\begin{cases}
\tfrac{-2
(2C_{T-{t_{n,i}}}+1)
h}
{k}
\,
\mathbb{E}
\big[
\|\varepsilon_{t_{n,i}}\|^2
\big], 
\quad & \text{for} \quad nk+i  \leq N_h,
\\[5pt]
\tfrac{2
(2L+1)
h}{k}
\,
\mathbb{E}
\big[
\|\varepsilon_{t_{n,i}}\|^2
\big],
& \text{for}\quad  nk+i > N_h.
\end{cases}
\end{equation}
Now, we deal with $2
\mathbb{\,E} 
\big[
\langle
\varepsilon_{t_{n,i}},
R_{t_{n,i}}
\rangle
\big].$
In particular, if we let $\mathcal{F}_{t_{n,i}}$ denote the $\sigma$-algebra generated by $\{\overleftarrow{X}_{t_{n,i}}\},\,\{\tilde{X}^N_{t_{n,i}}\},$ and $\{X^N_{t_{n,i}}\}$ for any $n\in [N]_0$ and $i\in[k-1]_0$.
Since $\varepsilon_{t_{n,i}}$ is $\mathcal{F}_{t_{n,i}}$-measurable,
we apply the Young inequality with $0<\lambda<1/4$ to obtain that
\begin{equation}
\begin{aligned}
2
\mathbb{\,E} 
\Big[
\big  \langle
\varepsilon_{t_{n,i}},
R_{t_{n,i}}
\big \rangle
\Big]
& =
2 
        \mathbb{\,E}
        \bigg[
        \Big\langle
        \varepsilon_{t_{n,i}}, 
        \mathbb{E}
        \big[
        R_{t_{n,i}}
        \big|
        \mathcal{F}_{t_{n,i}}
        \big]
        \Big\rangle
        \bigg]\\
        & \leq 
        \tfrac{\lambda h}{k}
        \mathbb{\,E}
        \left[
        \left\|
        \varepsilon_{t_{n,i}}
        \right\|^2
        \right]
        +
        \tfrac{k}
        {\lambda h} 
        \mathbb{E}
        \Big[
        \big\|
        \mathbb{E}
        \big[
        R_{t_{n,i}}
        \mid 
        \mathcal{F}_{t_{n,i}}
        \big]
        \big\|^2
        \Big].
\end{aligned}
\end{equation}
For the term involving $M_{t_{n,i}}$, conditioning on $\mathcal{F}_{t_{n,i}}$ gives
\begin{equation}
\begin{aligned}
\mathbb{E}
\Big[
Z_{t_{n,i}}
-
\xi_{t_{n,i}}
+
\sqrt{\tfrac{hk}{2}}
\big(
B_{t_{n,i}}-1/k
\big)
\Big( 
b_{t_{n,i}}(\tilde{X}_{t_{n,i}}^N)
-
b_{t_n}(X_{t_n}^N)
\Big)
\Big|
\mathcal{F}_{t_{n,i}}
\Big]
=0,
\end{aligned}
\end{equation}
where we use that $Z_{t_{n,i}}$, $\xi_{t_{n,i}}$, and $B_{t_{n,i}}$ are independent of $\mathcal{F}_{t_{n,i}}$.
Hence, for $2
\,
\mathbb{\,E} 
\big[
\langle
\varepsilon_{t_{n,i}},
M_{t_{n,i}}
\rangle
\big]$, the Young inequality and the Lipschitz condition of $b_t(\cdot)$ imply
\begin{equation}
\begin{aligned}
&
2
\,
\mathbb{\,E} 
\Big[
\big  \langle
\varepsilon_{t_{n,i}},
M_{t_{n,i}}
\big \rangle
\Big]
 = 
2 
        \mathbb{\,E}
        \bigg[
        \Big\langle
        \varepsilon_{t_{n,i}}, 
        \mathbb{E}
        \big[
        M_{t_{n,i}}
        \big|
        \mathcal{F}_{t_{n,i}}
        \big]
        \Big\rangle
        \bigg]\\
        & =
        -2
        \sqrt{\tfrac{2h}{k}}
        \mathbb{\,E}
        \bigg[
        \Big\langle
        \varepsilon_{t_{n,i}}, 
        \mathbb{E}
        \Big[
        Z_{t_{n,i}}
        -
        \xi_{t_{n,i}}
        +
\sqrt{\tfrac{hk}{2}}
\big(
B_{t_{n,i}}-1/k
\big)
\Big( 
b_{t_{n,i}}(\tilde{X}_{t_{n,i}}^N)
-
b_{t_n}(X_{t_n}^N)
\Big)
\Big]
        \Big|
        \mathcal{F}_{t_{n,i}}
        \Big]
        \Big\rangle
        \bigg]\\
& \quad +
\tfrac{2h}{k}
        \mathbb{\,E}
        \Big[
        \big\langle
        \varepsilon_{t_{n,i}}, 
b_{t_{n,i}}(X_{t_{n,i}}^N)
-
b_{t_{n,i}}(\tilde{X}_{t_{n,i}}^N)
        \big\rangle
        \Big]\\
         & \leq
\tfrac{\lambda h}{k}
        \,
        \mathbb{E}
        \big[
        \|
        \varepsilon_{t_{n,i}}\|^2
        \big]
        +
        \tfrac{(2 L+1)^2h}{\lambda k}
        \mathbb{E}
        \Big[
        \big\|
        X_{t_{n,i}}^N
         -
        \tilde{X}_{t_{n,i}}^N
        \big\|^2
        \Big].
    \end{aligned}
\end{equation} 
Regarding the last three terms of \eqref{eq:x_n-y_n-square-expectation}, using the Young inequality and Lipschitz condition of 
$b_t(\cdot)$ leads to 
\begin{equation}
\begin{aligned}
\tfrac{2h}{k}
\mathbb{\,E} 
\Big[
\big \langle
b_{t_{n,i}}
(\overleftarrow{X}_{t_{n,i}})
-
b_{t_{n,i}}
(X^N_{t_{n,i}}),
R_{t_{n,i}}
\big\rangle
\Big] 
& \leq 
        \tfrac{(2 L+1)^2
        h^2}{k^2}
         \mathbb{\,E}
         \left[
         \left\|
         \varepsilon_{t_{n,i}}
         \right\|^2
         \right]
         +
         \mathbb{E}
         \Big[
         \big\|
         R_{t_{n,i}}
        \big\|^2
        \Big],\\
\tfrac{2h}{k}
\mathbb{\,E} 
\Big[
\big \langle
b_{t_{n,i}}
(\overleftarrow{X}_{t_{n,i}})
-
b_{t_{n,i}}
(X^N_{t_{n,i}}),
M_{t_{n,i}}
\big\rangle
\Big] 
& \leq 
        \tfrac{(2 L+1)^2
        h^2}{k^2}
         \mathbb{\,E}
         \left[
         \left\|
         \varepsilon_{t_{n,i}}
         \right\|^2
         \right]
         +
         \mathbb{E}
         \Big[
         \big\|
         M_{t_{n,i}}
        \big\|^2
        \Big],
\end{aligned}
\end{equation}
and 
\begin{equation}
\begin{aligned} 
2\,
\mathbb{\,E} 
\Big[
\big \langle
R_{t_{n,i}},
M_{t_{n,i}}
\big\rangle
\Big]
\leq 
\mathbb{E}
\Big[
\big\|
R_{t_{n,i}}
\big\|^2
\Big]
+
\mathbb{E}
\Big[
\big\|
M_{t_{n,i}}
\big\|^2
\Big].
\end{aligned}
\end{equation}
Plugging the above estimates into \eqref{eq:x_n-y_n-square-expectation} yields
\begin{equation}
\label{eq:iteration_error}
\begin{aligned}
\mathbb{E}
\big[
\|\varepsilon_{t_{n,i+1}}
\|^2
\big]
& \leq
\gamma_{t_{n,i}} 
\mathbb{\,E}
\big[
\|\varepsilon_{t_{n,i}}\|^2
\big]
+
\tfrac{k}
{\lambda h} 
\mathbb{E}
\Big[
\big\|
\mathbb{E}
\big[
R_{t_{n,i}}
\mid 
\mathcal{F}_{t_{n,i}}
\big]
\big\|^2
\Big] 
+
3
\,
\mathbb{E}
\Big[
\big\|
R_{t_{n,i}}
\big\|^2
\Big]\\
& \quad +
3
\,
\mathbb{E}
\Big[
\big\|
M_{t_{n,i}}
\big\|^2
\Big]
+
\tfrac{(2 L+1)^2h}{\lambda k}
        \mathbb{E}
        \Big[
        \big\|
        X_{t_{n,i}}^N
         -
        \tilde{X}_{t_{n,i}}^N
        \big\|^2
        \Big],
\end{aligned}
\end{equation}
where 
\begin{align*}
\gamma_{t_{n,i}}
: =
\begin{cases}
1+
\big( 
\tfrac{3(2L+1)^2h}{k}-2(2 C_{T-{t_{n,i}}}+1)+2\lambda
\big)
\tfrac{h}{k},
& \text { for } nk+i\leq N_h 
\\[5pt]
1+
\big(
\tfrac{3(2L+1)^2h}{k}
+2(2L+1)+ 2\lambda\big)
\tfrac{h}{k},
& \text { for } nk+i> N_h. 
\end{cases}
\end{align*}
The proof is completed.
\end{proof}


The remainder of this subsection collects the auxiliary estimates needed to iterate Proposition \ref{prop:Upper-mean-square error-bounds}, starting with moment bounds of the processes $\{\overleftarrow{X}_t\}_{t \in [0, T]}$ and $\{X^N_{t_{n,i}}\}_{n \in [N]_0, i \in [k-1]_0}$ defined by \eqref{eq:backward-SDE} and \eqref{eq:poisson-aux}.

\begin{lem}
\label{lem:target-p-order-moment}
Let Assumption \ref{ass:weak-convex} hold. Then $\mu$ admits a finite $2p$-th order moment for any $p \in [1,2]$. That is, there exists a constant $\mathcal{M}$, independent of $d$ and $t$, satisfying
\begin{equation}
\E
\big[\|x_0\|^{2p}\big]
\leq
\mathcal{M}d^{p}.
\end{equation}
Moreover, the process $\big\{\overleftarrow{X}_t\big\}_{ t \in [0, T]}$ defined by \eqref{eq:backward-SDE} satisfies, for any $ t \in [0,T]$, 
\begin{equation}
\E
\Big[
\big\|
\overleftarrow{X}_t
\big\|^{2p}
\Big]
\leq
\mathcal{M}_1 d^p,
\end{equation}
where $\mathcal{M}_1$ is independent of $d$ and $t$.
\end{lem}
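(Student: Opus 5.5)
To prove Lemma \ref{lem:target-p-order-moment}, I would first bound the moments of $\mu$ and then transfer them to the reverse process through the explicit forward representation.

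\textbf{Moments of $\mu$.} Assumption \ref{ass:weak-convex} gives, for all $x$ and a fixed reference point $y=0$,
\[
\langle \nabla V(x)-\nabla V(0),x\rangle \ge \alpha\|x\|^2 - \|x\| f_M(\|x\|) \ge \alpha\|x\|^2 - 2\sqrt M\,\|x\|,
\]
since $\tanh\le 1$. Hence
\[
\langle \nabla V(x),x\rangle \ge \alpha\|x\|^2 - \big(2\sqrt M+\|\nabla V(0)\|\big)\|x\|,
\]
which is a dissipativity condition. I would then run the standard integration-by-parts (or Langevin stationarity) argument. For $\phi(x)=\|x\|^{2p}$, the overdamped Langevin generator $\mathcal{L}\phi=\Delta\phi-\langle\nabla V,\nabla\phi\rangle$ satisfies $\int \mathcal{L}\phi\,\mathrm d\mu=0$, with boundary terms vanishing by the Gaussian-type tails implied by dissipativity. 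Computing
\[
\mathcal{L}\phi = 2p(d+2p-2)\|x\|^{2p-2} - 2p\|x\|^{2p-2}\langle\nabla V(x),x\rangle
\]
gives
\[
\alpha\,\E\|x_0\|^{2p} \le (d+2p-2)\,\E\|x_0\|^{2p-2} + c\,\E\|x_0\|^{2p-1}.
\]
Applying Hölder/Young to the lower powers yields $\E\|x_0\|^{2p}\le \mathcal M d^p$. The constant $c$ involves $\|\nabla V(0)\|$. To avoid $d$-dependence there, I would center at a minimizer $x^\ast$ of $V$, so that $\nabla V(x^\ast)=0$, and assume, as is implicit in the paper, that $\|x^\ast\|$ is $\mathcal O(\sqrt d)$ or dimension-free. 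This dimension bookkeeping is the main subtlety I expect in this part.

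\textbf{Transfer to the reverse process.} Since $\overleftarrow X_t\overset{d}{=}\overrightarrow X_{T-t}$ by time reversal, \eqref{eq:solution_ou} gives
\[
\overleftarrow X_t\overset{d}{=}e^{-(T-t)}x_0+\sqrt{1-e^{-2(T-t)}}\,Z.
\]
Using $\|a+b\|^{2p}\le 2^{2p-1}(\|a\|^{2p}+\|b\|^{2p})$, together with $\E\|Z\|^{2p}\le C_p d^p$ (via $\E\|Z\|^{4}=d^2+2d$ and Jensen for $p\le2$) and the bound above, I obtain
\[
\E\|\overleftarrow X_t\|^{2p}\le 2^{2p-1}(\mathcal M+C_p)d^p=:\mathcal M_1 d^p,
\]
which is uniform in $t$ because both coefficients are at most $1$.

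I expect the main obstacle to be justifying the vanishing of the boundary terms and making all constants genuinely independent of $d$. This second point is exactly where care with the reference point $x^\ast$ is needed.
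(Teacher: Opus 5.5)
Your argument is correct and is essentially what the paper intends: the paper gives no proof of its own, citing Silveri--Ocello (Prop.~B.1, Lemma~D.1) for $p=1$ and asserting that the extension to $p\in[1,2]$ is analogous. Your integration by parts against $\|x\|^{2p}$ under the dissipativity implied by weak convexity, followed by the OU representation of $\mathcal{L}(\overleftarrow{X}_t)=\mathcal{L}(\overrightarrow{X}_{T-t})$, supplies exactly that argument.

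On the dimension caveat you flag, some normalization is genuinely necessary. For example, $\mu=\mathcal{N}(m,I_d)$ with $\|m\|=d$ is $(1,M)$-weakly convex yet has second moment $d+d^2$. The normalization the paper implicitly uses is $\|\nabla V(0)\|=L'\sqrt{d}$ (see the proof of Lemma~\ref{lem:numerical-p-order-moment}). Under it, your constant $c=2\sqrt{M}+\|\nabla V(0)\|$ already gives $(2c/\alpha)^{2p}\lesssim d^p$ with no recentering. It also implies your condition on the minimizer, since weak convexity gives $\|x^\ast\|\le(2\sqrt{M}+L'\sqrt{d})/\alpha$.
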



The case $p=1$ is established
in~\cite[Proposition~B.1 and Lemma~D.1]{Silveri2025beyond}; the
extension to $p\in[1,2]$ follows from Assumption~\ref{ass:weak-convex}
by an analogous argument and is therefore omitted. We mentioned that $\gamma_{t_{n,i}}$ and $\Gamma_{t_{n,i}}$ are generic finite constants whose values may vary across different propositions, but will not depend upon the dimension $d$.

\begin{lem}
\label{lem:numerical-p-order-moment}
Let Assumptions \ref{ass:weak-convex}, \ref{ass:one-side-lipschitz} and \ref{ass:lipschitz-time} hold. 
The process $\{{X}^N_{t_{n}}\}_{n\in [N]_0}$ generated by \eqref{eq:poisson-aux} using the uniform step-size. For any $\lambda\in (0,1/4)$, let the step-size satisfy
\begin{equation}
\label{eq:h-condition-numerical-bound}
h \leq
\tfrac{2(1-\lambda)}
{C(L, \tilde{L})},
\end{equation}
where $C(L,  \tilde{L})$ is given by \eqref{eq:numerical-bounded-constant-c}.
Then, there exists a constant $\mathcal{M}_2$, 
satisfying, for any $n \in [N]_0$, 
\begin{equation}
\E
\Big[
\big\|
{X}^N_{t_{n}}
\big\|^4
\Big]
\leq
\mathcal{M}_2 
T^5 d^2,
\end{equation}
where $\mathcal{M}_2:=C\mathrm{e}^
{C
\eta(\alpha, M,C(L, \tilde{L})h/2+\lambda)}$, and $\eta(\alpha, M,C(L, \tilde{L})h/2+\lambda)$ as defined in \eqref{eq:eta-def}, and $C$ is a constant independent of $d$ and $h$.
\end{lem}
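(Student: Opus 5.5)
We propose to prove the fourth-moment bound by a one-step recursion for $\E[\|X^N_{t_{n,i}}\|^4]$ along the fine grid $t_{n,i}$. We then iterate it across the two regimes given by Proposition \ref{prop:regime-shift-drift}, in the same way that Proposition \ref{prop:Upper-mean-square error-bounds} is iterated for the error.

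\textbf{Step 1 (drift at the origin).} First we control $b_t(0)=2\nabla\log\tilde p_{T-t}(0)$. By Lemma \ref{lem:target-p-order-moment} and the spatial Lipschitz bound of Lemma \ref{lem:score-lipschitz}, we have $\|b_t(0)\|\le \|b_t(x)\|+(2L+1)\|x\|$. Taking $x=\overleftarrow{X}_t$ and using $\E\|\nabla\log p_{T-t}(\overleftarrow X_t)\|^2\lesssim d/(1-e^{-2(T-t)})\wedge L^2$-type Fisher bounds, we get $\|b_t(0)\|\le C\sqrt d$. Alternatively, Assumption \ref{ass:lipschitz-time} gives $\|b_t(0)-b_{t_n}(0)\|\le 2\tilde L h$, so varying the drift within a step costs only $O(h)$. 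Either way we obtain the linear growth bound $\|b_t(x)\|\le (2L+1)\|x\|+C\sqrt d$, and the one-sided bound
\[
\langle b_t(x),x\rangle\le G_t\|x\|^2+C\sqrt d\|x\|.
\]

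\textbf{Step 2 (one-step fourth-moment recursion).} We write the PM step as
\[
X^N_{t_{n,i+1}}=X^N_{t_{n,i}}+\tfrac hk b_{t_{n,i}}(X^N_{t_{n,i}})+\sqrt{2h/k}\,Z_{t_{n,i}}+E_{n,i},
\]
where $E_{n,i}$ collects three corrections: $\tfrac hk(b_{t_n}(X_{t_n})-b_{t_{n,i}}(X_{t_{n,i}}))$, the Bernoulli term $hB(\cdot)$, and the $\tilde X$ versus $X$ mismatch. Set $A=X^N_{t_{n,i}}+\tfrac hk b+E$ and $\sigma Z=\sqrt{2h/k}Z$. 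We expand
\[
\|A+\sigma Z\|^4=\big(\|A\|^2+2\sigma\langle A,Z\rangle+\sigma^2\|Z\|^2\big)^2
\]
and take the conditional expectation. Odd Gaussian terms vanish, so
\[
\E\|A+\sigma Z\|^4\le \E\|A\|^4+C\tfrac hk(d+1)\E\|A\|^2+C\tfrac{h^2}{k^2}d^2.
\]
Next, $\|A\|^2\le \big(1+2G_{t_{n,i}}\tfrac hk+C\tfrac{h^2}{k^2}\big)\|X\|^2+C\tfrac hk\sqrt d\|X\|+(\text{terms from }E)$. The Bernoulli term has $\E[B^j]=1/k$, so its $p$-th moment contributions are of size $h^p/k$ times $\E\|b_{t_{n,i}}(\tilde X)-b_{t_n}(X)\|^p$. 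Lipschitz continuity in space and time (Assumption \ref{ass:lipschitz-time}) bounds this difference by $(2L+1)\|\tilde X-X_{t_n}\|+2\tilde Lh(1+\|X_{t_n}\|)$. In turn $\tilde X-X_{t_n}$ has fourth moment $\lesssim h^2(\E\|X_{t_n}\|^4+d^2)$. Applying Young's inequality with parameter $\lambda$ to all cross terms gives
\[
\E\|X^N_{t_{n,i+1}}\|^4\le\big(1+(4G_{t_{n,i}}+2\lambda+C(L,\tilde L)h)\tfrac hk\big)\max_{j\le i}\E\|X^N_{t_{n,j}}\|^4+C\tfrac hk d^2.
\]
This is the origin of $C(L,\tilde L)$ in \eqref{eq:numerical-bounded-constant-c}, and the step-size condition \eqref{eq:h-condition-numerical-bound} keeps the factor in the contractive regime below one.

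\textbf{Step 3 (iteration over regimes).} For $nk+i\le N_h$ we use $G_t=-(2C_{T-t}+1)\le-\rho$ with $\rho=C(L,\tilde L)h/2+\lambda$, so the factor is at most $1$. Thus over $[0,\widehat T]$ the moment grows at most additively, giving $\lesssim d^2+T d^2$ from the Gaussian initialization $\E\|X_0\|^4\le 3d^2$. On the non-contractive window of length $\eta(\alpha,M,\rho)$, the factor is $1+C h/k$, and the discrete Gronwall lemma yields multiplication by $e^{C\eta}$. Within a macro-step the $\max_{j\le i}$ and the dependence on $X_{t_n}$ are handled by first proving a sub-step bound $\E\|X_{t_{n,i}}\|^4\le C\E\|X_{t_n}\|^4+Cd^2h^2$ by induction on $i$. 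The resulting polynomial factors in $T$ are crude, coming from summing additive terms and from bounds of the form $(1+\cdots)$ accumulated over $N$ steps with $\sum$ of order $T$ per power, which leads to the stated $T^5$.

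\textbf{Main obstacle.} The hardest part is Step 2. The Bernoulli correction is not small pathwise: it has size $h$, not $h/k$. Only its moments are small, since $\E B^p=1/k$. Its fourth-power cross terms with $\|X\|^3$ must therefore be grouped so that each carries a factor $1/k$ and at least one factor $h$. We would first try conditioning on $B$, splitting on $\{B=1\}$ (probability $1/k$), and using there the crude bound $\|X+hb+\cdots\|^4\le(1+Ch)\|X\|^4+C h d^2$.
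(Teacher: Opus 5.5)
Your route is genuinely different from the paper's. The paper never expands on the fine grid. It runs a coarse-grid recursion for $\mathbb{E}[(1+\|X^N_{t_n}\|^2)^2]$, factors $1+\|X^N_{t_{n+1}}\|^2\le\gamma_{t_n}(1+\|X^N_{t_n}\|^2)(1+\mathbb{I}_1+\mathbb{I}_2)$, and conditions on $\mathcal{F}_{t_n}$. This forces it to handle moments of $\sum_j B_{t_{n,j}}$ and the correlated Gaussian partial sums inside $\mathcal{I}_{t_n}$. Your one-sub-step expansion, where $B_{t_{n,i}}$ and $Z_{t_{n,i}}$ are independent of everything else in the step, is cleaner. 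The paper also bounds $\|b_t(0)\|\le C(\tilde L T+L'\sqrt d)$ by telescoping Assumption \ref{ass:lipschitz-time} back to $\nabla V(0)$. Your ``alternative'' via that assumption only controls increments and would need the same telescoping. The resulting $\|b_t(0)\|^4\lesssim T^4+d^2$ is exactly where the paper's $T^5$ comes from. Your Fisher-information route is better, once written correctly: $\mathbb{E}\|\nabla\log p_s(\overrightarrow{X}_s)\|^2=-\mathbb{E}[\Delta\log p_s(\overrightarrow{X}_s)]\le(L+1)d$, which is not dimension-free as your ``$\wedge L^2$'' suggests. This gives $\|b_t(0)\|\le C\sqrt d$ with no hypothesis on $\nabla V(0)$, and your iteration then yields $C\mathrm{e}^{C\eta}(1+T)d^2$, which implies the claim for $T\ge1$. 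So the $T^5$ is not produced by your argument at all. One minor correction: $X^N_0\sim\mathcal{L}(\overrightarrow{X}_T)$, not $\gamma^d$; its fourth moment is controlled by Lemma \ref{lem:target-p-order-moment} with $p=2$.

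The genuine problem is in the step you single out as hardest. In the contractive regime the dissipation per sub-step is only $4(2C_{T-t_{n,i}}+1)\tfrac{h}{k}\ge 4\rho\tfrac{h}{k}$, with $\rho=C(L,\tilde L)h/2+\lambda<1$. Hence every non-dissipative contribution to the coefficient of $\mathbb{E}\|X\|^4$ must be of size $\tfrac{h}{k}(\lambda+O(h))$. A term that merely ``carries a factor $1/k$ and one factor $h$'' is of the same order as the contraction and cannot be absorbed. Your fallback is to use $(1+Ch)\|X\|^4$ on $\{B_{t_{n,i}}=1\}$ and pay probability $1/k$. That adds $C\tfrac{h}{k}$ with $C$ of order $4(2L+1)>4\rho$, and the contractive regime is lost.

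What saves the argument is that on $\{B_{t_{n,i}}=1\}$ the extra increment is $h\Delta$, where $\Delta=b_{t_{n,i}}(\tilde X^N_{t_{n,i}})-b_{t_n}(X^N_{t_n})$ is itself small, as in \eqref{eq:numer-b-error}: $\|\Delta\|\le((2L+1)^2+\tilde L)h\|X^N_{t_n}\|+\mathcal{I}_{t_n}$ with $(\mathbb{E}[\mathcal{I}_{t_n}^4])^{1/4}\lesssim\sqrt{hd}$. Let $m_{n,i}$ denote the running maximum of the fourth moments. H\"older and Young then give $\tfrac{4h}{k}\mathbb{E}[\|X^N_{t_{n,i}}\|^3\|\Delta\|]\le\tfrac{h}{k}\big((Ch+\lambda)m_{n,i}+C_\lambda h^2d^2\big)$. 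The same treatment handles your recentering term $\tfrac{h}{k}\big(b_{t_n}(X^N_{t_n})-b_{t_{n,i}}(X^N_{t_{n,i}})\big)$. This needs the drift part of $\tilde X^N_{t_{n,i}}-X^N_{t_n}$ at its true order $h^4\mathbb{E}\|X^N_{t_n}\|^4$. Your stated bound $h^2(\mathbb{E}\|X^N_{t_n}\|^4+d^2)$ would leave a term $C\sqrt{h}\,\tfrac{h}{k}\,\mathbb{E}\|X\|^4$ that the stated step-size condition does not absorb. With these corrections the recursion you display in Step 2 holds, and Step 3 goes through.
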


\begin{proof}
Recalling \eqref{eq:poisson-aux}, we have, for any $n \in [N-1]_0$,
\begin{equation}
\begin{aligned}
{X}_{t_{n+1}}^N
& =
{X}_{t_n}^N
+
h
b_{t_n}({X}_{t_{n}}^N)
+
h 
\sum_{j=0}^{k-1}
B_{t_{n,j}}
\Big(
b_{t_{n,j}}
(\tilde{X}_{t_{n,j}}^N)
-
b_{t_n}({X}_{t_{n}}^N)
\Big)
+
\sqrt{\tfrac{2h}{k}}
\sum_{j=0}^{k-1}
Z_{t_{n,j}}.
\end{aligned}
\end{equation}
Taking square on both sides shows
\begin{equation}
\begin{aligned}
&
\big\|
{X}_{t_{n+1}}^N
\big\|^2\\
& =
\big\|
{X}_{t_n}^N
\big\|^2
+
h^2 
\big\|
b_{t_n}({X}_{t_{n}}^N)
\big\|^2
+
h^2
\Big\|
\sum_{j=0}^{k-1}
B_{t_{n,j}}
\Big(
b_{t_{n,j}}
(\tilde{X}_{t_{n,j}}^N)
-
b_{t_n}({X}_{t_{n}}^N)
\Big)
\Big\|^2
+
\tfrac{2h}{k}
\Big\|
\sum_{j=0}^{k-1}
Z_{t_{n,j}}
\Big\|^2\\
& \quad 
+
2h 
\Big \langle
{X}_{t_n}^N,
b_{t_n}({X}_{t_{n}}^N)
\Big \rangle 
+
2h 
\Big \langle
{X}_{t_n}^N,
\sum_{j=0}^{k-1}
B_{t_{n,j}}
\Big(
b_{t_{n,j}}
(\tilde{X}_{t_{n,j}}^N)
-
b_{t_n}({X}_{t_{n}}^N)
\Big)
\Big \rangle 
\\
& \quad +
2
\sqrt{\tfrac{2h}{k}}
\Big \langle
{X}_{t_n}^N,
\sum_{j=0}^{k-1}
Z_{t_{n,j}}
\Big \rangle 
+
2h^2 
\Big \langle
b_{t_n}({X}_{t_{n}}^N),
\sum_{j=0}^{k-1}
B_{t_{n,j}}
\Big(
b_{t_{n,j}}
(\tilde{X}_{t_{n,j}}^N)
-
b_{t_n}({X}_{t_{n}}^N)
\Big)
\Big \rangle\\
& \quad
+
2h
\sqrt{\tfrac{2h}{k}}
\Big \langle
b_{t_n}({X}_{t_{n}}^N),
\sum_{j=0}^{k-1}
Z_{t_{n,j}}
\Big \rangle
+
2h 
\sqrt{\tfrac{2h}{k}}
\Big \langle
\sum_{j=0}^{k-1}
B_{t_{n,j}}
\Big(
b_{t_{n,j}}
(\tilde{X}_{t_{n,j}}^N)
-
b_{t_n}({X}_{t_{n}}^N)
\Big),
\sum_{j=0}^{k-1}
Z_{t_{n,j}}
\Big \rangle.
\end{aligned}
\end{equation}
Next using the Young inequality gives 
\begin{equation}
\label{eq:numer-square}
\begin{aligned}
&
\big\|
{X}_{t_{n+1}}^N
\big\|^2 \\
& \leq 
\big\|
{X}_{t_n}^N
\big\|^2
+
3h^2 
\big\|
b_{t_n}({X}_{t_{n}}^N)
\big\|^2
+
3h^2
\Big\|
\sum_{j=0}^{k-1}
B_{t_{n,j}}
\Big(
b_{t_{n,j}}
(\tilde{X}_{t_{n,j}}^N)
-
b_{t_n}({X}_{t_{n}}^N)
\Big)
\Big\|^2
+
\tfrac{6h}{k}
\Big\|
\sum_{j=0}^{k-1}
Z_{t_{n,j}}
\Big\|^2\\
& \quad
+
2h 
\Big \langle
{X}_{t_n}^N,
b_{t_n}({X}_{t_{n}}^N)
\Big \rangle 
+
2h 
\Big \langle
{X}_{t_n}^N,
\sum_{j=0}^{k-1}
B_{t_{n,j}}
\Big(
b_{t_{n,j}}
(\tilde{X}_{t_{n,j}}^N)
-
b_{t_n}({X}_{t_{n}}^N)
\Big)
\Big \rangle \\
& \quad
+
2
\sqrt{\tfrac{2h}{k}}
\Big \langle
{X}_{t_n}^N,
\sum_{j=0}^{k-1}
Z_{t_{n,j}}
\Big \rangle.
\end{aligned}
\end{equation}
Before proceeding further, by the definition of $\tilde{X}_{t_{n,j}}^N$ in \eqref{eq:poisson-aux}, Assumption \ref{ass:lipschitz-time} and the triangle inequality, we have, for any $j \in [k-1]_0$,
\begin{equation}
\label{eq:numer-b-error}
\begin{aligned}
& 
\big\|
b_{t_{n,j}}
(\tilde{X}_{t_{n,j}}^N)
-
b_{t_n}({X}_{t_{n}}^N)
\big\| \\
& \leq 
(2L+1) 
\big\|
\tilde{X}_{t_{n,j}}^N
-
{X}_{t_{n}}^N
\big\|
+
\tilde{L} h
\Big(
\big\|
{X}_{t_{n}}^N
\big\|
+
1
\Big)\\
& \leq 
(2L+1) 
\Big\|
\tfrac{jh}{k}
b_{t_n}(X_{t_n}^N)
+
\sum_{l=0}^{j-1}
\sqrt{\tfrac{2h}{k}} 
Z_{t_{n,l}}
\Big\|
+
\tilde{L} h
\Big(
\big\|
{X}_{t_{n}}^N
\big\|
+
1
\Big)\\
& \leq 
\big((2L+1)^2 +\tilde{L}\big)
h
\big\|
X_{t_n}^N
\big\|
+
(2L+1)
\sqrt{\tfrac{2h}{k}} 
\Big\|
\sum_{l=0}^{j-1}
Z_{t_{n,l}}
\Big\|
+
(2L+1) 
h
\big\|
b_{t_{n}}(0)
\big\|
+
\tilde{L} h, 
\end{aligned}
\end{equation}
where for short we denote 
\begin{equation}
\label{eq:notation-short-I}
\mathcal{I}_{t_n}:=
(2L+1)
\sqrt{\tfrac{2h}{k}} 
\Big\|
\sum_{l=0}^{j-1}
Z_{t_{n,l}}
\Big\|
+
(2L+1)h 
\big\|
b_{t_{n}}(0)
\big\|
+
\tilde{L} h.
\end{equation}
Thanks to the above estimate, the short hand notation \eqref{eq:notation-short-I}, the Cauchy-Schwarz inequality and the Young inequality, one can derive 
\begin{equation}
\begin{aligned}
& 
2h 
\Big \langle
{X}_{t_n}^N,
\sum_{j=0}^{k-1}
B_{t_{n,j}}
\Big(
b_{t_{n,j}}
(\tilde{X}_{t_{n,j}}^N)
-
b_{t_n}({X}_{t_{n}}^N)
\Big)
\Big \rangle \\
& \leq 
2h 
\sum_{j=0}^{k-1}
B_{t_{n,j}}
\big\|
{X}_{t_n}^N
\big\|
\cdot 
\Big\|
b_{t_{n,j}}
(\tilde{X}_{t_{n,j}}^N)
-
b_{t_n}({X}_{t_{n}}^N)
\Big\|\\
& \leq 
\big(
2\big((2L+1)^2 +\tilde{L}\big)
+
1
\big)
h^2
\sum_{j=0}^{k-1}
B_{t_{n,j}}
\big\|
{X}_{t_n}^N
\big\|^2
+ 
\sum_{j=0}^{k-1}
B_{t_{n,j}}
\mathcal{I}^2_{t_n}.
\end{aligned}
\end{equation}
Regarding the third term of \eqref{eq:numer-square}, in view of \eqref{eq:numer-b-error}, one can obtain 
\begin{equation}
\begin{aligned}
&
\Big\|
\sum_{j=0}^{k-1}
B_{t_{n,j}}
\Big(
b_{t_{n,j}}
(\tilde{X}_{t_{n,j}}^N)
-
b_{t_n}({X}_{t_{n}}^N)
\Big)
\Big\|^2 \\
& \leq 
\Big(
\sum_{j=0}^{k-1}
B_{t_{n,j}}
\big\|
b_{t_{n,j}}
(\tilde{X}_{t_{n,j}}^N)
-
b_{t_n}({X}_{t_{n}}^N)
\big\|
\Big)^2\\
& \leq 
\Big(
\big((2L+1)^2 +\tilde{L}\big)
h
\sum_{j=0}^{k-1}
B_{t_{n,j}}
\big\|
X_{t_n}^N
\big\|
+
\sum_{j=0}^{k-1}
B_{t_{n,j}}
\mathcal{I}_{t_n}
\Big)^2\\
& \leq 
2\big((2L+1)^2 +\tilde{L}\big)^2
h^2
\Big(\sum_{j=0}^{k-1}
B_{t_{n,j}}
\Big)^2
\big\|
X_{t_n}^N
\big\|^2
+
2
\Big(\sum_{j=0}^{k-1}
B_{t_{n,j}}
\Big)^2
\mathcal{I}_{t_n}^2.
\end{aligned}
\end{equation}
Moreover, we apply the fact that $\| b_t(x) \| \leq (2L+1)\|x\|+\|b_{t}(0)\| $ to arrive at
\begin{equation}
\big\|
b_{t_n}({X}_{t_{n}}^N)
\big\|^2 
\leq 
2(2L+1)^2
\big\|{X}_{t_n}^N\big \|^2
+
2
\big\|b_{t_n}(0)\big\|^2.
\end{equation}
Now, we estimate $\big \langle
{X}_{t_n}^N,
b_{t_n}({X}_{t_{n}}^N)
\big \rangle $ for two cases: $nk\leq N_h$ and $nk >N_h $. For $nk \leq N_h$, Proposition \ref{prop:regime-shift-drift}, in conjunction with the Young inequality with $\lambda \in (0, 1/4)$, directly yields
\begin{equation}
2h 
\Big \langle
{X}_{t_n}^N,
b_{t_n}({X}_{t_{n}}^N)
\Big \rangle  
\leq 
-2h (2C_{T-t_n} +1-\lambda /2)
\big\|{X}_{t_n}^N\big \|^2
+
\tfrac{h}{\lambda}
\big\|b_{t_n}(0)\big\|^2.
\end{equation}
In the case $nk >N_h $, we apply Proposition \ref{prop:regime-shift-drift} and the Young inequality to get
\begin{equation}
2h 
\Big \langle
{X}_{t_n}^N,
b_{t_n}({X}_{t_{n}}^N)
\Big \rangle  
\leq 
(2L+3/2)h
\big\|{X}_{t_n}^N\big \|^2
+ 
h
\big\|b_{t_n}(0)\big\|^2.
\end{equation}
{\color{black}
Due to \eqref{eq:h-condition-numerical-bound},
\(B_{t_{n,j}}^2=B_{t_{n,j}}\) and \(\mathcal{I}_{t_{n,j}}\geq0\),
we have
\[
6h^2
\Big(
\sum_{j=0}^{k-1}
B_{t_{n,j}}\mathcal{I}_{t_{n,j}}
\Big)^2
+
\sum_{j=0}^{k-1}
B_{t_{n,j}}\mathcal{I}_{t_{n,j}}^2
\leq
(1+6h^2)
\Big(
\sum_{j=0}^{k-1}
B_{t_{n,j}}\mathcal{I}_{t_{n,j}}
\Big)^2
\leq
7
\Big(
\sum_{j=0}^{k-1}
B_{t_{n,j}}\mathcal{I}_{t_{n,j}}
\Big)^2 .
\]
}
Equipped with the above estimates, and using the fact that $\sum_{j=0}^{k-1}
B_{t_{n,j}}
\le
\big(
\sum_{j=0}^{k-1}
B_{t_{n,j}}
\big)^{2}$, one can derive from \eqref{eq:numer-square} that
\begin{equation}
\begin{aligned}
&
1+
\big\|
{X}_{t_{n+1}}^N
\big\|^2\\
& \leq 
\gamma_{t_n}
\Big(
1+
\big\|
{X}_{t_n}^N
\big\|^2
\Big)
+
8
\Big(
2\big((2L+1)^2 +\tilde{L}\big)^2
+1
\Big)
\Big(\sum_{j=0}^{k-1}
B_{t_{n,j}}
\Big)^2
h^2
\big\|
{X}_{t_n}^N
\big\|^2\\
& \quad 
+ 
\big( 
6+C(\lambda)
\big)
h 
\big\|b_{t_n}(0)\big\|^2
+ 
{\color{black}
7
\Big(
\sum_{j=0}^{k-1}
B_{t_{n,j}}\mathcal{I}_{t_{n,j}}
\Big)^2}
+
\tfrac{6h}{k}
\Big\|
\sum_{j=0}^{k-1}
Z_{t_{n,j}}
\Big\|^2
+
2
\sqrt{\tfrac{2h}{k}}
\Big \langle
{X}_{t_n}^N,
\sum_{j=0}^{k-1}
Z_{t_{n,j}}
\Big \rangle
\\
& =: 
\gamma_{t_n}
\Big(
1+
\big\|
{X}_{t_n}^N
\big\|^2
\Big)
+
8
\Big(
2\big((2L+1)^2 +\tilde{L}\big)^2
+1
\Big)
\Big(\sum_{j=0}^{k-1}
B_{t_{n,j}}
\Big)^2
h^2
\big\|
{X}_{t_n}^N
\big\|^2
+
\Xi_{t_{n+1}},
\end{aligned}
\end{equation}
where
\begin{equation}
\begin{aligned}
\Xi_{t_{n+1}}
& :=
\big( 6+C(\lambda)
\big)
h 
\big\|b_{t_n}(0)\big\|^2
+ 
{\color{black}
7
\Big(
\sum_{j=0}^{k-1}
B_{t_{n,j}}\mathcal{I}_{t_{n,j}}
\Big)^2}
+
\tfrac{6h}{k}
\Big\|
\sum_{j=0}^{k-1}
Z_{t_{n,j}}
\Big\|^2\\
& \quad 
+
2
\sqrt{\tfrac{2h}{k}}
\Big \langle
{X}_{t_n}^N,
\sum_{j=0}^{k-1}
{\color{black}Z_{t_{n,j}}}
\Big \rangle
\end{aligned}
\end{equation}
and 
\begin{align*}
\gamma_{t_{n}}
: =
\begin{cases}
1+
\big(
6(2L+1)^2 h
-
2  (2C_{T-t_n} +1)
+ \lambda
\big) 
h,
& \text { for } nk\leq N_h 
\\[5pt]
1+
\big(
6(2L+1)^2+2L+3/2
\big)
h,
& \text { for } nk> N_h.
\end{cases}
\end{align*}
Furthermore, we have 
\begin{equation}
\label{eq:numer-2-square}
\begin{aligned}
&
1+
\big\|
{X}_{t_{n+1}}^N
\big\|^2 \\
& =
\gamma_{t_n}
\Big(
1+
\big\|
{X}_{t_n}^N
\big\|^2
\Big)
+
8
\Big(
2\big((2L+1)^2 +\tilde{L}\big)^2
+1
\Big)
\Big(\sum_{j=0}^{k-1}
B_{t_{n,j}}
\Big)^2 
h^2
\big\|
{X}_{t_n}^N
\big\|^2
+
\Xi_{t_{n+1}}\\
& \leq 
\gamma_{t_n}
\Big(
1+
\big\|
{X}_{t_n}^N
\big\|^2
\Big)
\bigg(
1+
8
\Big(
2\big((2L+1)^2 +\tilde{L}\big)^2
+1
\Big)
\Big(\sum_{j=0}^{k-1}
B_{t_{n,j}}
\Big)^2 
h^2
/\gamma_{t_n}
+
\tfrac{\Xi_{t_{n+1}}}
{\gamma_{t_n}
\big(
1+
\|
{X}_{t_n}^N
\|^2
\big)}
\bigg)\\
& =:
\gamma_{t_n}
\Big(
1+
\big\|
{X}_{t_n}^N
\big\|^2
\Big)
\big(
1+
\mathbb{I}_1
+
\mathbb{I}_2
\big).
\end{aligned}
\end{equation}
Taking square and then expectations of \eqref{eq:numer-2-square} to deduce
\begin{equation}
\begin{aligned}
\E
\bigg[
\Big(1+
\big\|
{X}_{t_{n+1}}^N
\big\|^2 
\Big)^2
\Bigg]
& =
\gamma_{t_n}^2
\E
\bigg[
\Big(
1+
\big\|
{X}_{t_n}^N
\big\|^2
\Big)^2
\big(
1+
\mathbb{I}_1
+
\mathbb{I}_2
\big)^2
\bigg].
\end{aligned}
\end{equation}
With regard to the right term of above equality, we first notice that ${X}_{t_n}^N$ is $\mathcal{F}_{t_n}$-measurable. By further taking conditional expectation with
respect to $\mathcal{F}_{t_n}$, one can see that 
\begin{equation}
\begin{aligned}
\E
\bigg[
\Big(
1+
\big\|
{X}_{t_n}^N
\big\|^2
\Big)^2
\big(
1+
\mathbb{I}_1
+
\mathbb{I}_2
\big)^2
\bigg]
& =
\E
\bigg[
\Big(
1+
\big\|
{X}_{t_n}^N
\big\|^2
\Big)^2
\big(
1+
\mathbb{I}_1
+
\mathbb{I}_2
\big)^2
\big|
\mathcal{F}_{t_n}
\Big]
\bigg]\\
& =
\E
\bigg[
\Big(
1+
\big\|
{X}_{t_n}^N
\big\|^2
\Big)^2
\E 
\Big[
\big(
1+
\mathbb{I}_1
+
\mathbb{I}_2
\big)^2
\big|
\mathcal{F}_{t_n}
\Big]
\bigg].
\end{aligned}
\end{equation}
Next, 
expanding the square and applying the Young inequality to the cross term
\(2\mathbb{I}_1\mathbb{I}_2\) show
\begin{equation}
\begin{aligned}
\E 
\Big[
\big(
1+
\mathbb{I}_1
+
\mathbb{I}_2
\big)^2
\big|
\mathcal{F}_{t_n}
\Big]
& \leq
1
+
2\,
\E 
\Big[
\mathbb{I}^2_1
\big|
\mathcal{F}_{t_n}
\Big]
+
2\,
\E 
\Big[
\mathbb{I}^2_2
\big|
\mathcal{F}_{t_n}
\Big]
+
2\,
\E 
\Big[
\mathbb{I}_1
\big|
\mathcal{F}_{t_n}
\Big]
+
2\,
\E 
\Big[
\mathbb{I}_2
\big|
\mathcal{F}_{t_n}
\Big].
\end{aligned}
\end{equation}
Since $\{B_{t_{n,j}}\}_{j \in [k-1]_0}$ are independent of $\mathcal{F}_{t_{n}}$, and given that 
$\E
\big[
\big(\sum_{j=0}^{k-1}
B_{t_{n,j}}
\big)^2
\big] 
\leq
2 ,\,\E
\big[
\big(\sum_{j=0}^{k-1}
B_{t_{n,j}}
\big)^4
\big] 
\leq
15$, 
one can derive that
\begin{equation}
\begin{aligned}
\E 
\Big[
\mathbb{I}_1
\big|
\mathcal{F}_{t_n}
\Big]
&\leq 
16
\Big(
2\big((2L+1)^2 +\tilde{L}\big)^2
+1
\Big)h^2
/\gamma_{t_n}, 
\\
\E 
\Big[
\mathbb{I}^2_1
\big|
\mathcal{F}_{t_n}
\Big]
&\leq 
{\color{black}960}
\Big(
2\big((2L+1)^2 +\tilde{L}\big)^2
+1
\Big)^2
h^4
/
{\color{black}\gamma^2_{t_n}}.
\end{aligned}
\end{equation}
Next, we recall some properties of the standard Gaussians: for any $n \in  [N]_0, j \in[k-1]_0$, 
\begin{equation}
\E \bigg[
\Big\|
\sum_{l=0}^{j-1}
Z_{t_{n,l}}
\Big\|^2
\big|
\mathcal{F}_{t_n}
\bigg] 
= 
\sum_{l=0}^{j-1}
\E \Big[
\big\|
Z_{t_{n,l}}
\big\|^2
\big|
\mathcal{F}_{t_n}
\Big] 
\leq 
kd, 
\quad 
\E \bigg[
\Big\|
\sum_{l=0}^{j-1}
Z_{t_{n,l}}
\Big\|^4
\big|
\mathcal{F}_{t_n}
\bigg]  
\leq 
k^2d^2,
\end{equation}
and 
\begin{equation}
\E \Big[
\sum_{l=0}^{j-1}
Z_{t_{n,l}}
\big|
\mathcal{F}_{t_n}
\Big]
=0.
\end{equation}
As a result, 
\begin{equation}
\begin{aligned}
\E 
\big[
\mathcal{I}_{t_n}^2
\big|
\mathcal{F}_{t_n}
\big]
& \leq 
\tfrac{6(2L+1)^2h}{k}
\,
\E 
\bigg[
\Big\|
\sum_{l=0}^{j-1}
Z_{t_{n,l}}
\Big\|^2
\big|
\mathcal{F}_{t_n}
\bigg]
+
3(2L+1)h^2
\big\|
b_{t_{n}}(0)
\big\|^2
+
3\tilde{L}^2 h^2\\
& \leq 
6(2L+1)^2d h
+
3(2L+1)^2h^2
\big\|
b_{t_{n}}(0)
\big\|^2
+
3\tilde{L}^2 h^2.
\end{aligned}
\end{equation}
Using the same technique, we have 
\begin{equation}
\label{eq:I_t_n-4-est}
\E 
\big[
\mathcal{I}_{t_n}^4
\big|
\mathcal{F}_{t_n}
\big]
\leq 
108(2L+1)^4 d^2 h^2
+
27(2L+1)^4h^4
\big\|
b_{t_{n}}(0)
\big\|^4
+
27\tilde{L}^4 h^4.
\end{equation}
{\color{black}
Similarly, since
\[
\Big(
\sum_{j=0}^{k-1}
B_{t_{n,j}}\mathcal{I}_{t_{n,j}}
\Big)^2
\leq
\Big(
\sum_{j=0}^{k-1}B_{t_{n,j}}
\Big)
\sum_{j=0}^{k-1}
B_{t_{n,j}}\mathcal{I}_{t_{n,j}}^2,
\]
we have
\[
\E
\bigg[
\Big(
\sum_{j=0}^{k-1}
B_{t_{n,j}}\mathcal{I}_{t_{n,j}}
\Big)^2
\bigm|
\mathcal{F}_{t_n}
\bigg]
\leq
C(L,\tilde{L})
\Big(
dh+h^2\|b_{t_n}(0)\|^2+h^2
\Big).
\]
}
Therefore, we have
\begin{equation}
\begin{aligned}
\E\big[
\Xi_{t_{n+1}}
\big|
\mathcal{F}_{t_n}
\big]
& {\color{black}\leq}
{\color{black}C}\,
\E \Big[
{\color{black}
\Big(
\sum_{j=0}^{k-1}
B_{t_{n,j}}\mathcal{I}_{t_{n,j}}
\Big)^2}
\big|
\mathcal{F}_{t_n}
\Big]
+
\tfrac{6h}{k}
\,
\E \bigg[
\Big\|
\sum_{j=0}^{k-1}
Z_{t_{n,j}}
\Big\|^2
\big|
\mathcal{F}_{t_n}
\bigg]\\
& \quad 
+
2
\sqrt{\tfrac{2h}{k}}
\E \bigg[
\Big \langle
{X}_{t_n}^N,
\sum_{j=0}^{k-1}
Z_{t_{n,j}}
\Big\rangle
\big|
\mathcal{F}_{t_n}
\bigg]
+
\big(
6+C(\lambda)
\big)h 
\big\|b_{t_n}(0)\big\|^2\\
& \leq 
C(L, {\color{black}\tilde{L}},\lambda)
\big(
d + \|b_{t_n}(0)\|^2
\big)h.
\end{aligned}
\end{equation}
Based on the above estimate, we have
\begin{equation}
\begin{aligned}
\E 
\Big[
\mathbb{I}_2
\big|
\mathcal{F}_{t_n}
\Big]
& =
\tfrac{1}
{
\gamma_{t_n}
\big(
1+
\|
{X}_{t_n}^N
\|^2
\big)}
\E
\Big[
\Xi_{t_{n+1}}
\big|
\mathcal{F}_{t_n}
\Big]\\
& \leq 
\tfrac{
C(L, \tilde{L},\lambda)
(d + \|b_{t_n}(0)\|^2)h}
{\gamma_{t_n}
\big(
1+
\|
{X}_{t_n}^N
\|^2
\big)}.
\end{aligned}
\end{equation}
For the estimate of $\E 
\big[
\mathbb{I}^2_2
\big|
\mathcal{F}_{t_n}
\big]$, using a fundamental inequality shows
\begin{equation}
\label{eq:numerical-Xi-square-expansion}
\begin{aligned}
\big(
\Xi_{t_{n+1}}
\big)^2
& \leq 
{\color{black}4}
\Big(
\big(6+C(\lambda)
\big)^2
h^2 \big\|b_{t_n}(0)\big\|^4
+ 
{\color{black}
49
\Big(
\sum_{j=0}^{k-1}
B_{t_{n,j}}\mathcal{I}_{t_{n,j}}
\Big)^4}
+
\tfrac{36h^2}{k^2}
\Big\|
\sum_{j=0}^{k-1}
Z_{t_{n,j}}
\Big\|^4\\
& \quad 
+
\tfrac{8h}{k}
\Big \langle
{X}_{t_n}^N,
\sum_{j=0}^{k-1}
Z_{t_{n,j}}
\Big \rangle^2
\Big)\\
& {\color{black}\leq}
{\color{black}4}
\Big(
\big(6+C(\lambda)\big)^2h^2 
\big\|b_{t_n}(0)\big\|^4
+
49
\Big(
\sum_{j=0}^{k-1}
B_{t_{n,j}}\mathcal{I}_{t_{n,j}}
\Big)^4
+
\tfrac{36h^2}{k^2}
\Big\|
\sum_{j=0}^{k-1}
Z_{t_{n,j}}
\Big\|^4\\
& \quad 
{\color{black}
+
\tfrac{8h}{k}
\big\|
{X}_{t_n}^N
\big\|^2
\Big\|
\sum_{j=0}^{k-1}
Z_{t_{n,j}}
\Big \|^2}
\Big).
\end{aligned}
\end{equation}
{\color{black}
Moreover, for the nonnegative Bernoulli variables \(B_{t_{n,j}}\), we use
\[
\Big(\sum_{j=0}^{k-1} B_{t_{n,j}}\mathcal{I}_{t_{n,j}}\Big)^4
\leq
\Big(\sum_{j=0}^{k-1}B_{t_{n,j}}\Big)^3
\sum_{j=0}^{k-1}B_{t_{n,j}}\mathcal{I}_{t_{n,j}}^4 .
\]
}
{\color{black}
Since the Bernoulli variables are independent of the Gaussian variables, the following estimate hold true,
\begin{equation}
\label{eq:numerical-BI-fourth-bound}
\begin{aligned}
\E 
\bigg[
\Big(
\sum_{j=0}^{k-1}
B_{t_{n,j}}\mathcal{I}_{t_{n,j}}
\Big)^4
\bigm|
\mathcal{F}_{t_n}
\bigg]
& \leq
\sum_{j=0}^{k-1}
\E\bigg[
B_{t_{n,j}}
\Big(\sum_{\ell=0}^{k-1}B_{t_{n,\ell}}\Big)^3
\bigg]\,
\E\big[
\mathcal{I}_{t_{n,j}}^4
\bigm|
\mathcal{F}_{t_n}
\big]\\
& \leq
\E\bigg[
\Big(\sum_{\ell=0}^{k-1}B_{t_{n,\ell}}\Big)^4
\bigg]\,
\sup_{0\leq j\leq k-1}
\E\big[
\mathcal{I}_{t_{n,j}}^4
\bigm|
\mathcal{F}_{t_n}
\big]\\
& \leq
C(L,\tilde{L})
\Big(
d^2h^2+h^4\|b_{t_n}(0)\|^4+h^4
\Big).
\end{aligned}
\end{equation}
}
Moreover,
\begin{equation}
\label{eq:numerical-gaussian-moment-bound}
\E\bigg[
\Big\|\sum_{j=0}^{k-1}Z_{t_{n,j}}\Big\|^2
\bigm|\mathcal{F}_{t_n}
\bigg]=kd,
\qquad
\E\bigg[
\Big\|\sum_{j=0}^{k-1}Z_{t_{n,j}}\Big\|^4
\bigm|\mathcal{F}_{t_n}
\bigg]
=k^2(d^2+2d)
\leq 3k^2d^2 .
\end{equation}
Keeping this in mind and using the Young inequality leads to
\begin{equation}
\begin{aligned}
\E 
\Big[
\big(
\Xi_{t_{n+1}}
\big)^2
\big|
\mathcal{F}_{t_n}
\Big]
& {\color{black}\leq}
{\color{black}32dh}
{\color{black}
\big\|
{X}_{t_n}^N
\big\|^2
+
C (L,{\color{black}\tilde{L}},\lambda)
\big( d^2 + \|b_{t_n}(0)\|^4
\big)
h^2 .}
\end{aligned}
\end{equation}
{\color{black}
Substituting \eqref{eq:numerical-BI-fourth-bound} and
\eqref{eq:numerical-gaussian-moment-bound} into
\eqref{eq:numerical-Xi-square-expansion} yields
}
\begin{equation}
\label{eq:numerical-bounded-third-term}
\begin{aligned}
\E 
\Big[
\big(
\Xi_{t_{n+1}}
\big)^2
\big|
\mathcal{F}_{t_n}
\Big]
& \leq 
196
\,
\E 
\bigg[
\mathcal{I}_{t_n}^4
\E
\Big[
\Big(\sum_{j=0}^{k-1}
B_{t_{n,j}}
\Big)^4
\big|
\mathcal{F}_{t_n}
\Big]
\bigg]
+
\tfrac{144h^2}{k^2}
\,
\E \bigg[
\Big\|
\sum_{j=0}^{k-1}
Z_{t_{n,j}}
\Big\|^4
\big|
\mathcal{F}_{t_n}
\bigg]\\
& \quad 
+
\tfrac{32h}{k}
\big\|
{X}_{t_n}^N
\big\|^2
\E \bigg[
\Big\|
\sum_{j=0}^{k-1}
Z_{t_{n,j}}
\Big \|^2
\big|
\mathcal{F}_{t_n}
\bigg]
+
4
\big
(6+C(\lambda)\big)^2h^2 
\big\|b_{t_n}(0)\big\|^4\\
& \leq 
32 d h 
\big\|
{X}_{t_n}^N
\big\|^2
+ 
C (L, \tilde{L},\lambda)
\big( d^2 + \|b_{t_n}(0)\|^4
\big)
h^2 .
\end{aligned}
\end{equation}
Therefore, one can obtain that 
\begin{equation}
\begin{aligned}
\E 
\Big[
\mathbb{I}^2_2
\big|
\mathcal{F}_{t_n}
\Big]
& =
\tfrac{1}
{
\gamma^2_{t_n}
\big(
1+
\|
{X}_{t_n}^N
\|^2
\big)^2}
\,
\E
\Big[
\big(
\Xi_{t_{n+1}}
\big)^2
\big|
\mathcal{F}_{t_n}
\Big]\\
&
\leq
{\color{black}
\tfrac{32dh}{\gamma^2_{t_n}\big(
1+
\|
{X}_{t_n}^N
\|^2
\big)}}
+
\tfrac{
C(L, {\color{black}\tilde{L}},\lambda)
\big(
d^2 + \|b_{t_n}(0)\|^4
\big)h^2}
{
\gamma^2_{t_n}
\big(
1+
\|
{X}_{t_n}^N
\|^2
\big)^2}.
\end{aligned}
\end{equation}
Equipped with estimates, one can derive that 
\begin{equation}
\begin{aligned}
&
\E 
\Big[
\big(
1+
\mathbb{I}_1
+
\mathbb{I}_2
\big)^2
\big|
\mathcal{F}_{t_n}
\Big]\\
& \leq
1
+
{\color{black}1952}
\Big(
2\big((2L+1)^2 +{\color{black}\tilde{L}}\big)^2
+1
\Big)^2
h^2
/{\color{black}\gamma^2_{t_n}}
+
\tfrac{
C(L, {\color{black}\tilde{L}},\lambda)
\big(
d + \|b_{t_n}(0)\|^2
\big)h}
{\gamma_{t_n}
\big(
1+
\|
{X}_{t_n}^N
\|^2
\big)}\\
& \quad
+
{\color{black}
\tfrac{64dh}{\gamma^2_{t_n}\big(
1+
\|
{X}_{t_n}^N
\|^2
\big)}}
+
\tfrac{
C(L, {\color{black}\tilde{L}},\lambda)
\big(
d^2 + \|b_{t_n}(0)\|^4
\big)h^2}
{
\gamma^2_{t_n}
\big(
1+
\|
{X}_{t_n}^N
\|^2
\big)^2}.
\end{aligned}
\end{equation}
Thus, we get 
\begin{equation}
\label{eq:1+X^4}
\begin{aligned}
\E
\bigg[
\Big(1+
\big\|
{X}_{t_{n+1}}^N
\big\|^2 
\Big)^2
\Bigg]
& {\color{black}\leq}
\gamma^2_{t_n}
\E
\bigg[
\Big(1+
\big\|
{\color{black}{X}_{t_n}^N}
\big\|^2 
\Big)^2
\Bigg]\\
& \quad +
{\color{black}1952}
\Big(
2\big((2L+1)^2 +{\color{black}\tilde{L}}\big)^2
+1
\Big)^2
h^2
\,
\E
\bigg[
\Big(1+
\big\|
{\color{black}{X}_{t_n}^N}
\big\|^2
\Big)^2
\Bigg]\\
& \quad +
\gamma_{t_n}
\E
\bigg[
\Big(
1+
\big\|
{X}_{t_n}^N
\big\|^2
\Big)
C(L, {\color{black}\tilde{L}},\lambda)
\big(
d + \|b_{t_n}(0)\|^2
\big)h
\bigg]\\
& \quad +
{\color{black}64}dh
\,
\E
\bigg[
\Big(
1+
\big\|
{X}_{t_n}^N
\big\|^2
\Big)
\bigg]
+
C(L, {\color{black}\tilde{L}},\lambda)
\big(
d^2 + \|b_{t_n}(0)\|^4
\big)h^2.
\end{aligned}
\end{equation}
Based on the above estimate, using the Young inequality gives 
\begin{equation}
\label{eq:numer-young}
\begin{aligned}
&
\gamma_{t_n}
\E
\bigg[
\Big(
1+
\big\|
{X}_{t_n}^N
\big\|^2
\Big)
C(L, \tilde{L},\lambda)
\big(
d + \|b_{t_n}(0)\|^2
\big)h
\bigg]\\
& \leq 
\lambda h \gamma^2_{t_n} /2
\,
\E
\bigg[
\Big(1+
\big\|
{X}_{t_{n}}^N
\big\|^2 
\Big)^2
\Bigg]
+
C(L, \tilde{L},\lambda)
\big(
d^2 + \|b_{t_n}(0)\|^4
\big)h,
\end{aligned}
\end{equation}
and 
\begin{equation}
\begin{aligned}
{\color{black}64}dh
\,
\E
\bigg[
\Big(
1+
\big\|
{X}_{t_n}^N
\big\|^2
\Big)
\bigg]
\leq 
\lambda h /2
\,
\E
\bigg[
\Big(1+
\big\|
{\color{black}{X}_{t_n}^N}
\big\|^2 
\Big)^2
\Bigg]
+
{\color{black}C(\lambda)d^2h}.
\end{aligned}
\end{equation}

Combining \eqref{eq:numer-young} with \eqref{eq:1+X^4}, one can obtain 
\begin{equation}
\label{eq:1+X4-after-young}
\begin{aligned}
&
\E
\bigg[
\Big(1+
\big\|
{X}_{t_{n+1}}^N
\big\|^2 
\Big)^2
\Bigg]\\
& {\color{black}\leq}
\Big(
\gamma^2_{t_n}
+
{\color{black}1952}
\Big(
2\big((2L+1)^2 +{\color{black}\tilde{L}}\big)^2
+1
\Big)^2
h^2
+
\lambda h (\gamma^2_{t_n}+1)/2
\Big)
\E
\bigg[
\Big(1+
\big\|
{\color{black}{X}_{t_n}^N}
\big\|^2 
\Big)^2
\Bigg]\\
& \quad +
C(L, {\color{black}\tilde{L}},\lambda)
\big(
d^2 + \|b_{t_n}(0)\|^4
\big)h
\\
& {\color{black}\leq} 
\Big(
\gamma^2_{t_n}
+
{\color{black}1952}
\Big(
2\big((2L+1)^2 +{\color{black}\tilde{L}}\big)^2
+1
\Big)^2
h^2
+
\lambda h  (\gamma^2_{t_n}+1)/2
\Big)
\E
\bigg[
\Big(1+
\big\|
{\color{black}{X}_{t_n}^N}
\big\|^2 
\Big)^2
\Bigg]\\
& \quad +
C(L, {\color{black}\tilde{L}},{\color{black}L'},\lambda)
\big(
d^2 + T^4
\big)h,
\end{aligned}
\end{equation}
where, the last inequality is an application of Lemma \ref{lem:score-lipschitz}, Assumption \ref{ass:lipschitz-time}{\color{black}, and the fact \(L'd^{1/2}:=\|\nabla V(0)\|\)}, for any $t \in [t_n,t_{n+1}]$,
\begin{equation}
\label{eq:b_t-0-est}
\begin{aligned}
\big\|
b_{t}(0)
\big\|
&{\color{black}
=2\big\|\nabla \log \tilde p_{T-t}(0)\big\|
=2\big\|\nabla \log {p}_{T-t}(0)\big\|}\\
& {\color{black}\leq C\bigg(}
\big\|
\nabla \log {p}_{T-{t}}(0)
-
\nabla \log {p}_{T-t_n}(0)
\big\|\\
& \quad +
\sum_{j=n}^{N-1}
\big\|
\nabla \log {p}_{T-{t_{j+1}}}(0)
-
\nabla \log {p}_{T-t_j}(0)
\big\|
+
\big\|
\nabla \log {p}_{0}(0)
\big\|
{\color{black}\bigg)}\\
& {\color{black}\leq 
C\big(\tilde{L}T + L' d^{1/2}\big).}
\end{aligned}
\end{equation}

\textbf{Case I.} $nk\leq N_h$.
By noting that if $h$ satisfies \eqref{eq:h-condition-numerical-bound}:
\[
h \leq 
\tfrac{2(1-\lambda)}
{C(L, \tilde{L})},
\]
where 
\begin{equation}
\label{eq:numerical-bounded-constant-c}
{\color{black}C(L,\tilde{L})}
:=
{\color{black}
6(2L+1)^2+
{\color{black}1952}
\Big(
2\big((2L+1)^2 +\tilde{L}\big)^2
+1
\Big)^2,}
\end{equation}
then we can set $\rho^{(1)}_h:= C(L, \tilde{L})h/2+\lambda$, such that $\rho^{(1)}_h \in (\lambda,1)$. 
Then we have:
\[
\gamma^2_{t_n} \leq \gamma_{t_n} \leq 1
\]
and 
\[
\gamma_{t_n}
+ 
1952
\Big(
2\big((2L+1)^2 +\tilde{L}\big)^2
+1
\Big)^2
h^2
+
\lambda h 
\leq 
1.
\]
One can then apply these inequalities iteratively, together with the facts that ${X}_{0}^N=\overleftarrow{X}_T$ and Lemma \ref{lem:target-p-order-moment} to obtain
\begin{equation}
\begin{aligned}
&
\E
\bigg[
\Big(1+
\big\|
{X}_{t_{n+1}}^N
\big\|^2 
\Big)^2
\Bigg] \\
& \quad  \leq 
\Big(
\gamma_{t_n}
+
{\color{black}1952}
\Big(
2\big((2L+1)^2 +{\color{black}\tilde{L}}\big)^2
+1
\Big)^2
h^2
+
\lambda h
\Big)
\E
\bigg[
\Big(1+
\big\|
{\color{black}{X}_{t_n}^N}
\big\|^2 
\Big)^2
\Bigg]\\
& \quad \quad +
C(L, {\color{black}\tilde{L}},{\color{black}L'},\lambda)
\big(
d^2 + T^4
\big)h\\
& \quad := 
\Gamma_{t_n}
\E
\bigg[
\Big(1+
\big\|
{\color{black}{X}_{t_n}^N}
\big\|^2 
\Big)^2
\Bigg]
+
C(L, {\color{black}\tilde{L}},{\color{black}L'},\lambda)
\big(
d^2 + T^4
\big)h\\
& \quad =
\prod_{\ell=0}^{n} 
\Gamma_{t_\ell}
\E
\bigg[
\Big(1+
\big\|
{X}_{0}^N
\big\|^2 
\Big)^2
\Bigg]
+
C(L, {\color{black}\tilde{L}},{\color{black}L'},\lambda)
\big(
d^2 + T^4
\big)h
\Big(
\sum_{m=1}^{n} 
\prod_{\ell=m}^{n}  
\Gamma_{t_\ell}
+1
\Big)\\
& \quad \leq 
{\color{black}C d^2} 
+
C(L, {\color{black}\tilde{L}},{\color{black}L'},\lambda)
\big(
d^2 + T^4
\big)T,
\end{aligned}
\end{equation}
where 
\begin{equation}
\Gamma_{t_n}   
: =
\gamma_{t_n}
+ 
1952
\Big(
2\big((2L+1)^2 +\tilde{L}\big)^2
+1
\Big)^2
h^2
+
\lambda h ,
\end{equation}
and the last inequality follows from
\[
h
\Big(
\sum_{m=1}^{n} 
\prod_{\ell=m}^{n}  
\big(
{\color{black}\Gamma_{t_\ell}}
\big)
+1
\Big)
\leq 
h
{\color{black}\big(
\lfloor N_h/k\rfloor+1 
\big)}
\leq 
{\color{black}\widehat{T}(\alpha,M,0)+h
\leq C T.}
\]

\textbf{Case II.} $nk > N_h$.
By iteration, we acquire
\begin{equation}
\begin{aligned}
&
\E
\bigg[
\Big(1+
\big\|
{X}_{t_{n+1}}^N
\big\|^2 
\Big)^2
\Bigg]\\
& \leq 
\Big(
\gamma^2_{t_n}
+
1952
\gamma_{t_n} 
\Big(
2\big((2L+1)^2 +\tilde{L}\big)^2
+1
\Big)^2
h^2
+
\lambda h (\gamma^2_{t_n}+1)/2  
\Big)
\E
\bigg[
\Big(1+
\big\|
{X}_{t_{n}}^N
\big\|^2 
\Big)^2
\Bigg]\\
& \quad +
C(L, \tilde{L},\lambda)
\big(
d^2 + T^4
\big)h \\
& := 
\tilde{\Gamma}_{t_n}
\E
\bigg[
\Big(1+
\big\|
{X}_{t_{n}}^N
\big\|^2 
\Big)^2
\Bigg]
+
C (L, \tilde{L},L',\lambda)
\big( d^2 + T^4 
\big)h\\
& \leq 
\prod_{\ell=0}^{n} 
\tilde{\Gamma}_{t_\ell}
\E
\bigg[
\Big(1+
\big\|
{X}_{0}^N
\big\|^2 
\Big)^2
\Bigg]
+
C (L, \tilde{L},L',\lambda)
\big( d^2 + T^4 
\big)h
\Big(
\sum_{m=1}^{n} 
\prod_{\ell=m}^{n}  
\tilde{\Gamma}_{t_\ell}
+1
\Big)\\
& \leq 
\Big(
1+
\mathcal{M}_1 d ^2
+
C (L, \tilde{L},L',\lambda)
\big( d^2 + T^4 
\big)
T
\Big)
\mathrm{e}^{
\tilde{C}(L,\tilde{L})
\eta(\alpha, M,\rho^{(1)}_h)},
\end{aligned}
\end{equation}
where the last inequality follows by an argument similar to those used for \eqref{eq:exp} and \eqref{eq:recusion},
and 
$\tilde{C}(L,\tilde{L})$ is independent of $d$ and $h$.
The proof is completed.
\end{proof}

Based on the above preliminaries, we next present some regularity estimates of the process $\{X^N_{t_{n,i}}\}_{n \in [N]_0, i \in [k-1]_0}$ defined by \eqref{eq:poisson-aux}.

\begin{lem}
\label{lem:prior-bound}
Let Assumptions \ref{ass:weak-convex}, \ref{ass:one-side-lipschitz} and \ref{ass:lipschitz-time} hold. Let $\big\{\tilde{X}_{t_{n,i}}\big\}$ and $\big\{{X}^N_{t_{n,i}}\big\}$, for any $n\in [N]_0, i\in [k-1]_0$ be defined as \eqref{eq:poisson-aux}.
Then, there exists a constant $C(\mathcal{M}_2)$, satisfying

\begin{equation}
\label{eq:b-error}
\E
\Big[
\big\|
b_{t_{n,i}}(\tilde{X}_{t_{n,i}}^N)
-
b_{t_n}(X_{t_n}^N)
\big\|^{4}
\Big]
\leq 
C(\mathcal{M}_2) 
T^5 d^2 h^{2},
\end{equation}
and 
\begin{equation}
\label{eq:x-error}
\E
\Big[
\big\|
\tilde{X}_{t_{n,i}}^N
-
X_{t_{n,i}}^N
\big\|^{2}
\Big]
\leq 
C(\mathcal{M}_2)
T^{5/2}d h^3,
\end{equation}
where $\mathcal{M}_2$ comes from Lemma \ref{lem:numerical-p-order-moment}.
\end{lem}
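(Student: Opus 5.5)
Both estimates follow from explicit representations of the two differences, bounded term by term with Lemma \ref{lem:score-lipschitz}, Assumption \ref{ass:lipschitz-time}, the bound \eqref{eq:b_t-0-est} on $\|b_t(0)\|$, Gaussian moments, and the fourth-moment bound of Lemma \ref{lem:numerical-p-order-moment}.

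\textbf{Bound \eqref{eq:b-error}.} Repeat the pathwise estimate \eqref{eq:numer-b-error} for a fixed $i$:
\[
\big\|b_{t_{n,i}}(\tilde X^N_{t_{n,i}})-b_{t_n}(X^N_{t_n})\big\|
\le \big((2L+1)^2+\tilde L\big)h\|X^N_{t_n}\|+(2L+1)\sqrt{\tfrac{2h}{k}}\Big\|\sum_{l=0}^{i-1}Z_{t_{n,l}}\Big\|+(2L+1)h\|b_{t_n}(0)\|+\tilde L h .
\]
Raise this to the fourth power with $(a_1+\dots+a_4)^4\le 64\sum a_j^4$ and bound the four pieces.
\begin{itemize}
\item The first piece contributes $h^4\,\E\|X^N_{t_n}\|^4\le \mathcal M_2T^5d^2h^4$.
\item The Gaussian piece contributes $(h/k)^2\,\E\|\sum_{l<i}Z\|^4\le (h/k)^2\cdot 3k^2d^2=3d^2h^2$. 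This is the dominant term and produces the $d^2h^2$ scaling.
\item By \eqref{eq:b_t-0-est}, $\|b_{t_n}(0)\|^4\le C(T^4+d^2)$, so the third piece contributes $Ch^4(T^4+d^2)$.
\item The last piece is $\tilde L^4h^4$.
\end{itemize}
Since $h\le 1$ (from the step-size condition) and $T\ge1$, every piece is at most $C(\mathcal M_2)T^5d^2h^2$.

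\textbf{Bound \eqref{eq:x-error}.} Unroll the second line of \eqref{eq:poisson-aux} from $t_n$ to $t_{n,i}$:
\[
X^N_{t_{n,i}}=X^N_{t_n}+\tfrac{ih}{k}b_{t_n}(X^N_{t_n})+h\sum_{j=0}^{i-1}B_{t_{n,j}}\big(b_{t_{n,j}}(\tilde X^N_{t_{n,j}})-b_{t_n}(X^N_{t_n})\big)+\sqrt{\tfrac{2h}{k}}\sum_{j=0}^{i-1}Z_{t_{n,j}}.
\]
The deterministic drift and the Gaussian sum coincide exactly with those in $\tilde X^N_{t_{n,i}}$, so they cancel. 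This leaves
\[
\tilde X^N_{t_{n,i}}-X^N_{t_{n,i}}=-h\sum_{j<i}B_{t_{n,j}}\Delta_j,\qquad \Delta_j:=b_{t_{n,j}}(\tilde X^N_{t_{n,j}})-b_{t_n}(X^N_{t_n}).
\]
This cancellation is the crux of the argument. By Cauchy--Schwarz,
\[
\Big\|\sum_j B_j\Delta_j\Big\|^2\le \Big(\sum_j B_j\Big)\sum_j B_j\|\Delta_j\|^2 .
\]
Each $B_{t_{n,j}}$ is independent of $\Delta_j$, since $\Delta_j$ depends only on $X^N_{t_n}$ and the Gaussians $Z_{t_{n,l}}$ with $l<j$. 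We may not, however, simply factor out $\E[\sum_l B_l]$, because $\sum_\ell B_\ell$ also contains $B_j$ and is therefore not independent of $B_j$. Instead use $B_jB_\ell$ for $\ell\ne j$ together with $B_j^2=B_j$:
\[
\E\Big[B_j\sum_\ell B_\ell\Big]\le \tfrac1k+\tfrac{k-1}{k^2}\le \tfrac 2k .
\]
The remaining factor $\|\Delta_j\|^2$ is a function of other variables; to decouple it, apply Hölder to $\E[B_j(\sum_\ell B_\ell)\|\Delta_j\|^2]$ or condition on the Bernoulli family, which is independent of the Gaussians. Summing over $j<i\le k$ then gives
\[
\E\|\tilde X-X\|^2\le C h^2\sup_j\big(\E\|\Delta_j\|^4\big)^{1/2}\le C h^2\,\big(C(\mathcal M_2)T^5d^2h^2\big)^{1/2}=C(\mathcal M_2)T^{5/2}dh^3,
\]
using \eqref{eq:b-error} with $i$ replaced by $j$.

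The main obstacle is this last step: handling the dependence between the random count $\sum_\ell B_\ell$ and the individual $B_j$ without losing a factor of $k$. The fallback is Hölder with the bound $\E[(\sum_\ell B_\ell)^4]\le15$ already used above, and absorbing the resulting extra constants into $C(\mathcal M_2)$.
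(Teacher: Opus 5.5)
Your proposal is correct and follows essentially the paper's proof. For \eqref{eq:b-error} you use the same spatial-Lipschitz/time-Lipschitz splitting, together with the explicit form of $\tilde X^N_{t_{n,i}}-X^N_{t_n}$ and Lemma~\ref{lem:numerical-p-order-moment}. For \eqref{eq:x-error} you use the same cancellation $\tilde X^N_{t_{n,i}}-X^N_{t_{n,i}}=-h\sum_{j<i}B_{t_{n,j}}\Delta_j$, and your weighted Cauchy--Schwarz with $\mathbb{E}[B_{t_{n,j}}\sum_\ell B_{t_{n,\ell}}]\le 2/k$ is equivalent to the paper's expansion using $\mathbb{E}[B_{t_{n,r}}B_{t_{n,\ell}}]=1/k^2$ and $\mathbb{E}[B_{t_{n,j}}^2]=1/k$. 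The step you flag as the main obstacle is resolved, as in the paper, by independence: the block-$n$ Bernoullis $\{B_{t_{n,\ell}}\}_\ell$ are independent of $X^N_{t_n}$ and $\{Z_{t_{n,l}}\}_l$, hence of every $\Delta_j$, so the expectation factorizes and no H\"older fallback is needed.
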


\begin{proof}

First, one can do the following decomposition:
\begin{equation}
\begin{aligned}
\E
\Big[
\big\|
b_{t_{n,i}}(\tilde{X}_{t_{n,i}}^N)
-
b_{t_n}(X_{t_n}^N)
\big\|^{4}
\Big]
& \leq 
8\,
\E
\Big[
\big\|
b_{t_{n,i}}(\tilde{X}_{t_{n,i}}^N)
-
b_{t_{n,i}}(X_{t_n}^N)
\big\|^{4}
\Big]\\
& \quad +
8\,
\E
\Big[
\big\|
b_{t_{n,i}}({X}_{t_{n}}^N)
-
b_{t_{n}}(X_{t_n}^N)
\big\|^{4}
\Big].
\end{aligned}
\end{equation}
Regarding the first term, we use the Lipschitz condition of $b_t(\cdot)$ and Lemma \ref{lem:numerical-p-order-moment} to obtain
\begin{equation}
\begin{aligned}
\E
\Big[
\big\|
b_{t_{n,i}}(\tilde{X}_{t_{n,i}}^N)
-
b_{t_{n,i}}(X_{t_n}^N)
\big\|^{4}
\Big]
& \leq 
(2L+1)^{4}
\,
\E
\Big[
\big\|
\tilde{X}_{t_{n,i}}^N
-
X_{t_n}^N
\big\|^{4}
\Big]\\
& =
 (2L+1)^{4}
\,
\E
\Big[
\big\|
\tfrac{ih}{k}
b_{t_n}(X_{t_n}^N)
+
\sum_{j=0}^{i-1}
\sqrt{\tfrac{2h}{k}} 
Z_{t_{n,j}}
\big\|^{4}
\Big]\\
& \leq 
C(\mathcal{M}_2) 
T^5 d^2 h^2.
\end{aligned}
\end{equation}
With regard to the second term, applying Assumption \ref{ass:lipschitz-time} and Lemma \ref{lem:numerical-p-order-moment}, we have
\begin{equation}
\begin{aligned}
\E
\Big[
\big\|
b_{t_{n,i}}({X}_{t_{n}}^N)
-
b_{t_{n}}(X_{t_n}^N)
\big\|^{4}
\Big]
& \leq 
C(\mathcal{M}_2) 
T^5 d^2 h^4.
\end{aligned}
\end{equation}
The first desired assertion \eqref{eq:b-error} is thus validated. Concerning the second assertion, we recall \eqref{eq:poisson-aux} to get, for any $n \in [N]_0, i \in [k-1]_0$,
\begin{equation}
\tilde{X}_{t_{n,i}}^N
-
X_{t_{n,i}}^N
=
-
h
\sum_{j=0}^{i-1}
B_{t_{n,j}}
\Big(
b_{t_{n,j}}(\tilde{X}_{t_{n,j}}^N)
-
b_{t_n}(X_{t_n}^N)
\Big).
\end{equation}

In view of the above equality, using \eqref{eq:b-error} and the identities
$\E[B_{t_{n,r}}B_{t_{n,\ell}}]=1/k^2$ for $r\neq \ell$ and
$\E[B^2_{t_{n,j}}]=1/k$, we obtain
\begin{equation}
\begin{aligned}
\E
\Big[
\big\|
\tilde{X}_{t_{n,i}}^N
-
X_{t_{n,i}}^N
\big\|^{2}
\Big]
& =
h^2
\,
\E
\bigg[
\Big\|
\sum_{j=0}^{i-1}B_{t_{n,j}}
\Big(
b_{t_{n,j}}(\tilde{X}_{t_{n,j}}^N)
-
b_{t_n}(X_{t_n}^N)
\Big)
\Big\|^{2}
\bigg]\\
& \leq
\underbrace{
\tfrac{2h^2}{k^2}
\sum_{0\leq r<\ell\leq i-1}
\mathbb{E}
\Big[
\big\langle 
b_{t_{n,r}}(\tilde{X}_{t_{n,r}}^N)
-
b_{t_n}(X_{t_n}^N),
b_{t_{n,\ell}}(\tilde{X}_{t_{n,\ell}}^N)
-
b_{t_n}(X_{t_n}^N)
\big\rangle
\Big]}_{:=J_1}\\
& \quad +
\underbrace{
\tfrac{h^2}{k}
\sum_{j=0}^{i-1}
\mathbb{E}
\Big[
\big\|
b_{t_{n,j}}(\tilde{X}_{t_{n,j}}^N)
-
b_{t_n}(X_{t_n}^N)
\big\|^2
\Big]}_{=:J_2}
\end{aligned}
\end{equation}
In the following we deal with the above two items separately. 
Using the Cauchy-Schwarz inequality and \eqref{eq:b-error}, one can derive that
\begin{equation}
\begin{aligned}
J_1 
& \leq 
\tfrac{2h^2}{k^2}
\sum_{0\leq r<\ell\leq i-1}
\mathbb{E}
\Big[
\big\|
b_{t_{n,r}}(\tilde{X}_{t_{n,r}}^N)
-
b_{t_n}(X_{t_n}^N)
\big\|
\cdot
\big\|
b_{t_{n,\ell}}(\tilde{X}_{t_{n,\ell}}^N)
-
b_{t_n}(X_{t_n}^N)
\big\|
\Big]\\
& \leq
h^2
\sup_{0\leq r \leq i-1}
\mathbb{E}
\Big[
\big\|
b_{t_{n,r}}(\tilde{X}_{t_{n,r}}^N)
-
b_{t_n}(X_{t_n}^N)
\big\|^2
\Big]\\
& \leq
C
T^{5/2}d h^3.
\end{aligned}
\end{equation}
Regarding $J_2$, we use \eqref{eq:b-error} to obtain 
\begin{equation}
J_2 \leq 
C(\mathcal{M}_2) 
T^{5/2}d h^3.
\end{equation}
The proof is thus completed.
\end{proof}

{\color{black}
\paragraph{Intuition behind the Poisson midpoint method.}
\leavevmode\par

\textbf{Algorithmically,} the intuition behind PM is to retain the bias reduction of a midpoint correction while avoiding the cost of evaluating the score at all $k$ intermediate points. The key idea is to activate each interior correction $\Delta_i=b_{t_{n,i}}(x_i)-b_{t_n}(y)$ independently with probability $1/k$. 
Writing $\mathbb E_B$ for expectation with respect to the Bernoulli variables alone, and using $\E_B[B_{t_{n,i}}]=1/k$, we obtain, for any $x,y \in \R^d$,
$$
\mathbb E_B
\Big[
h \, b_{t_n}(y)+
h
\sum_{i=0}^{k-1}B_{t_{n,i}}
\Delta_i
\Big]
=
\frac{h}{k}
\sum_{i=0}^{k-1}
b_{t_{n,i}}
(x_i).
$$
The PM update therefore has the same conditional mean as the full $k$-point midpoint update, while $\mathbb E_B[\sum_i B_{t_{n,i}}]=1$ means only one interior score is evaluated on average. Including the left-endpoint score, PM uses two expected score evaluations per macro-step, the same order as EM.

\textbf{Analytically,} 
each PM sub-step can be viewed as an EM step plus a centered random perturbation, which is given as
$$
R_{n,i}
:=
h\left(B_{t_{n,i}}-\tfrac1k\right)\Delta_i ,
\qquad
\mathbb E_B[R_{n,i}]=0 ,
$$
The key observation is that this perturbation is one-dimensional
since the only additional randomness comes from the scalar Bernoulli variable. 
A coupling adapted to this structure makes its Wasserstein contribution enter at a higher order than under a direct synchronous coupling. This is the key reason that the randomized thinning preserves the midpoint acceleration.
}

\paragraph{Rank-one perturbation and optimal coupling.}
Each refined PM sub-step produces, conditionally on $\mathcal{F}_{t_{n,j}}$, a Gaussian increment together with a mean-zero, rank-one perturbation
\[
V(x,y,B_{t_{n,i}})
:=
\sqrt{\tfrac{hk}{2}}
\Big(B_{t_{n,i}}-\tfrac{1}{k}\Big)
\Big(b_{t_{n,j}}(x)-b_{t_n}(y)\Big),
\]
supported on the line spanned by the drift discrepancy $b_{t_{n,j}}(x)-b_{t_n}(y)$. The following Wasserstein bound exploits this rank-one structure \cite{srinivasan2026poisson,kandasamy2024poisson}. 

\begin{lem}[Wasserstein bound for one-dimensional bounded perturbations]
\label{lem:key-lem}
Let $V$ be a random vector in $\mathbb{R}^d$ such that
\begin{enumerate}
\item[(i)] $\|V\|\leq\beta$ almost surely; $\mathbb{E}[V]=0$;
$\mathbb{E}[VV^\top]=\Sigma$;
\item[(ii)] there exists a deterministic unit vector
$u\in\mathbb{S}^{d-1}$ with $V\in\mathrm{span}(u)$ almost surely.
\end{enumerate}
Let $Z\sim\gamma^d$ be independent of $V$, and set
$\nu:=\operatorname{Tr}(\Sigma)=\mathbb{E}\big[\|V\|^2\big]$. Then
\[
\mathcal{W}_2^2\bigl(\mathcal{L}(Z+V),\gamma^d\bigr)
\leq
\tfrac{11}{2}\nu^2
+\mathbf{1}_{\{5\beta^2>1\}}\cdot 2\nu .
\]
\end{lem}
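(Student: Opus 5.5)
The plan is to reduce to a one-dimensional problem along $u$, and then split into two cases according to the indicator $\mathbf{1}_{\{5\beta^2>1\}}$. By (ii) we can write $V=v\,u$ with a scalar random variable $v$. From (i), $v$ satisfies
\[
|v|\le\beta \text{ a.s.},\qquad \mathbb{E}[v]=0,\qquad \mathbb{E}[v^2]=\nu .
\]
Decompose $Z=g\,u+Z^\perp$, where $g:=\langle Z,u\rangle\sim\mathcal{N}(0,1)$ and $Z^\perp$ is the projection onto $u^\perp$. Since $Z$ is standard Gaussian, $g$ and $Z^\perp$ are independent, and both are independent of $v$. Then $Z+V=(g+v)u+Z^\perp$ and $\gamma^d\overset{\mathrm{d}}{=}g' u+Z^\perp$. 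We couple the $Z^\perp$ components identically and the $u$-components optimally. Since $\|\cdot\|^2$ splits orthogonally, this gives
\[
\mathcal{W}_2^2\bigl(\mathcal{L}(Z+V),\gamma^d\bigr)\le \mathcal{W}_2^2\bigl(\mathcal{L}(g+v),\mathcal{N}(0,1)\bigr).
\]

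\emph{Case $5\beta^2>1$.} Use the synchronous coupling $(g+v,g)$. It gives $\mathcal{W}_2^2\le\mathbb{E}[v^2]=\nu\le 2\nu$, which is within the claimed bound.

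\emph{Case $5\beta^2\le 1$.} Here we need $\mathcal{W}_2^2\le\tfrac{11}{2}\nu^2$, and I would go through information inequalities. By Talagrand's $T_2$ inequality for the standard Gaussian, $\mathcal{W}_2^2\le 2\,\mathrm{KL}(\mathcal{L}(g+v)\,\|\,\mathcal{N}(0,1))$. Moreover $\mathrm{KL}\le\log(1+\chi^2)\le\chi^2$. Denote by $\varphi$ the standard normal density. The density of $g+v$ is $\varphi(x)\,r(x)$ with likelihood ratio $r(x)=\mathbb{E}_v\big[e^{xv-v^2/2}\big]$. Take an independent copy $v'$ of $v$. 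Integrating Gaussian exponentials gives the closed form
\[
\chi^2=\int\varphi\, r^2\,\mathrm{d}x-1=\mathbb{E}\big[e^{vv'}\big]-1 .
\]
Expanding the exponential and using independence,
\[
\mathbb{E}\big[e^{vv'}\big]-1=\sum_{m\ge1}\frac{(\mathbb{E}[v^m])^2}{m!}.
\]
The $m=1$ term vanishes because $\mathbb{E}[v]=0$; this is exactly where the centering is used, and it is what upgrades the bound from order $\nu$ to order $\nu^2$. For $m\ge2$, boundedness gives $|\mathbb{E}[v^m]|\le\beta^{m-2}\nu$. Hence
\[
\chi^2\le\nu^2\sum_{m\ge2}\frac{\beta^{2(m-2)}}{m!}=\nu^2\,\frac{e^{\beta^2}-1-\beta^2}{\beta^4}.
\]
For $\beta^2\le 1/5$ the right-hand factor is about $0.54$. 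Therefore $\mathcal{W}_2^2\le 2\chi^2\le 1.1\,\nu^2\le\tfrac{11}{2}\nu^2$.

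The main obstacle is the second case, specifically getting the quadratic order in $\nu$ rather than linear. The step to check carefully is the closed form of $\chi^2$ via $\mathbb{E}[e^{vv'}]$: the Gaussian integral
\[
\int\varphi(x)\,e^{x(v+v')-(v^2+v'^2)/2}\,\mathrm{d}x=e^{vv'}
\]
must be computed correctly, and the interchange of sum and expectation is justified by the a.s. bound $|v|\le\beta$. If one prefers to avoid Talagrand's inequality, a fallback is the explicit one-dimensional monotone coupling (quantile transport) together with a Taylor expansion of $r$. I would try the $T_2$/$\chi^2$ route first, since the constants fit with ample room.
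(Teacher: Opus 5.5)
Your proof is correct. The paper does not prove Lemma~\ref{lem:key-lem} itself: it imports the statement, including the constant $\tfrac{11}{2}$ and the threshold $5\beta^2>1$, from \cite{srinivasan2026poisson,kandasamy2024poisson}. So your argument is a self-contained replacement rather than a variant of anything in the paper.

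The individual steps all hold:
\begin{itemize}
\item The splitting $Z=gu+Z^\perp$, with $g$, $Z^\perp$, $v$ mutually independent, yields a valid coupling.
\item The Gaussian identity $\int\varphi(x)e^{x(v+v')-(v^2+v'^2)/2}\,\mathrm{d}x=e^{vv'}$ is right, and Tonelli justifies exchanging the integral and the expectation.
\item Numerically, $50\,(e^{1/5}-\tfrac65)\approx 1.07\le\tfrac{11}{2}$.
\end{itemize}
One small polish: work with the series $\sum_{m\ge2}\beta^{2(m-2)}/m!$ rather than $(e^{\beta^2}-1-\beta^2)/\beta^4$. The series covers $\beta=0$ and makes obvious the monotonicity in $\beta$ that you use when evaluating at $\beta^2=1/5$.

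Your route gives more than the cited statement. It improves the constant by a factor of about five. More interestingly, it shows that hypothesis (ii) is unnecessary. The same computation in $\mathbb{R}^d$, with an independent copy $V'$ of $V$, gives
\[
\chi^2\bigl(\mathcal{L}(Z+V)\,\|\,\gamma^d\bigr)=\mathbb{E}\bigl[e^{\langle V,V'\rangle}\bigr]-1\le \operatorname{Tr}(\Sigma^2)\sum_{m\ge2}\frac{\beta^{2(m-2)}}{m!}.
\]
This holds for three reasons: the $m=1$ term vanishes because $\mathbb{E}[V]=0$; $|\langle V,V'\rangle|\le\beta^2$; and $\mathbb{E}[\langle V,V'\rangle^2]=\operatorname{Tr}(\Sigma^2)\le\nu^2$. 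Your one-dimensional reduction can therefore be dropped. The quadratic order in $\nu$ comes from the centering of the perturbation, not from the rank-one structure that the paper presents as the key mechanism behind the improved PM bound.
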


A direct synchronous coupling of $Z+V$ with $Z$ would only give
\[
\mathcal{W}_2^2(\mathcal{L}(Z+V),\gamma^d)
\leq
\mathbb{E}\big[\|V\|^2\big]
=
\nu .
\]
Lemma~\ref{lem:key-lem} sharpens the linear dependence on $\nu$ to a
quadratic one $\nu^2\textbf{}$, 
which
is the main source of the improved PM discretization bound.
Since the error analysis is based on mean-square estimates, we
need a coupling that realizes this Wasserstein distance as an
\(L^2\)-error.
The following lemma
provides such a coupling via Brenier's optimal transport theorem.

\begin{lem}
\label{lem:key-lem-coupling}
Fix $n\in[N]_0$. Let $V_n$ satisfy the assumptions of
Lemma~\ref{lem:key-lem} with parameters $\beta_n$, $\Sigma_n$, and
$\nu_n:=\operatorname{Tr}(\Sigma_n)$, and let $Z_n\sim\gamma^d$ be
independent of $V_n$. Then there exists a random vector
$\xi_n\sim\gamma^d$ such that
\begin{equation}
\label{eq:key-lem-coupling}
\mathbb{E}\bigl[\|Z_n+V_n-\xi_n\|^2\bigr]
=
\mathcal{W}_2^2
\big(
\mathcal{L}(Z_n+V_n),
\gamma^d
\big)
\leq
\tfrac{11}{2}\nu_n^2
+\mathbf{1}_{\{5\beta_n^2>1\}}\cdot 2\nu_n .
\end{equation}
\end{lem}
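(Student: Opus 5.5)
The plan is to combine Lemma~\ref{lem:key-lem} with the existence of optimal couplings in $\mathcal{P}_2(\mathbb{R}^d)$, and then realize that coupling on an extended probability space.

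\textbf{Step 1: finite second moments.} Both $\mathcal{L}(Z_n+V_n)$ and $\gamma^d$ lie in $\mathcal{P}_2(\mathbb{R}^d)$. Indeed, $\|V_n\|\le\beta_n$ almost surely, and $\mathbb{E}\|Z_n\|^2=d$, so $\mathbb{E}\|Z_n+V_n\|^2\le 2d+2\beta_n^2<\infty$.

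\textbf{Step 2: an optimal plan exists.} By the standard existence theorem for optimal transport, the infimum in \eqref{eq:_def_w_2_distance} is attained. The set $\mathcal{D}(\mathcal{L}(Z_n+V_n),\gamma^d)$ is tight and weakly compact, and the cost $\|\theta_1-\theta_2\|^2$ is lower semicontinuous. So there is an optimal plan $\pi^\star$.

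Since $\gamma^d$ is absolutely continuous, one could alternatively invoke Brenier's theorem with the roles reversed. This gives a convex $\varphi$ with $T=\nabla\varphi$ pushing $\gamma^d$ forward to $\mathcal{L}(Z_n+V_n)$. That direction produces $Z_n+V_n$ as a function of a Gaussian, which is not what we need. The general optimal plan is therefore the cleaner route.

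\textbf{Step 3: realize $\xi_n$ on the same space as $(Z_n,V_n)$.} This is the only delicate point. Disintegrate $\pi^\star$ with respect to its first marginal, writing $\pi^\star(\mathrm{d}x,\mathrm{d}y)=\mathcal{L}(Z_n+V_n)(\mathrm{d}x)\,K(x,\mathrm{d}y)$ for a measurable kernel $K$. Such a kernel exists because $\mathbb{R}^d$ is Polish.

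Enlarge the probability space by an independent uniform variable $U\sim\mathrm{Unif}(0,1)$. Using the measurable representation $y=G(x,u)$ of the kernel, with $G(x,U)\sim K(x,\cdot)$, set
\[
\xi_n:=G(Z_n+V_n,U).
\]
Then the pair $(Z_n+V_n,\xi_n)$ has law $\pi^\star$. In particular $\xi_n\sim\gamma^d$, and
\[
\mathbb{E}\|Z_n+V_n-\xi_n\|^2=\mathcal{W}_2^2\big(\mathcal{L}(Z_n+V_n),\gamma^d\big).
\]

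\textbf{Step 4: apply the bound.} Applying Lemma~\ref{lem:key-lem} with $(\beta_n,\Sigma_n,\nu_n)$ gives the inequality in \eqref{eq:key-lem-coupling}.

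\textbf{Later use in the recursion.} When the coupling is applied inside the recursion, the conditioning on $\mathcal{F}_{t_{n,i}}$ matters. There one applies the construction conditionally on $\mathcal{F}_{t_{n,i}}$, which fixes the direction $u$ of $b_{t_{n,i}}(x)-b_{t_n}(y)$. This uses a measurable selection of optimal plans depending on the parameter. The fresh uniform variable $U$ is taken independent of everything else, so $\xi_n$ remains independent of $\mathcal{F}_{t_{n,i}}$ and is Gaussian. It can then play the role of $\xi_{t_{n,i}}$, i.e.\ the normalized Brownian increment.

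Measurability of $x\mapsto$ the optimal plan is the main obstacle I expect. For the lemma as stated, with $n$ fixed and no conditioning, Steps 1--4 suffice.
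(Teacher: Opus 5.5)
Your proof is correct, but you construct $\xi_n$ differently from the paper. Both arguments end with the same appeal to Lemma~\ref{lem:key-lem}.

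The paper observes that, since $Z_n\sim\gamma^d$ is independent of $V_n$, the law $\mathcal{L}(Z_n+V_n)$ is itself absolutely continuous, with density $x\mapsto\mathbb{E}[\gamma^d(x-V_n)]$. Brenier's theorem therefore applies in exactly the direction you need. It gives a convex $\phi_n$ such that $\nabla\phi_n$ pushes $\mathcal{L}(Z_n+V_n)$ optimally onto $\gamma^d$, and one sets $\xi_n:=\nabla\phi_n(Z_n+V_n)$. When you set Brenier aside, you only checked absolute continuity of the target $\gamma^d$, not of the source measure. That is why you thought only the unhelpful reversed direction was available.

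The paper's route makes $\xi_n$ a deterministic, canonical function of $Z_n+V_n$. It needs no disintegration and no enlargement of the probability space. Your route is more general: existence of an optimal plan, disintegration, and randomization by an independent uniform $U$. It never uses the smoothing effect of $Z_n$ and works for any source law in $\mathcal{P}_2(\mathbb{R}^d)$. The price is extra randomness in $\xi_n$.

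Absolute continuity also defuses the obstacle you flag for the later conditional use on $\mathcal{F}_{t_{n,i}}$. With an absolutely continuous source, the optimal plan is unique, so there is nothing to select. Measurable dependence on the parameters $(x,y)$ then follows from stability of optimal plans rather than from a measurable-selection theorem. For the lemma as stated, with $n$ fixed and no conditioning, your Steps 1--4 are complete.
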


\begin{proof}
Since $V_n$ is independent of $Z_n\sim\gamma^d$, the convolution
$\mathcal{L}(Z_n+V_n)$ admits a density with respect to Lebesgue
measure. Brenier's
theorem~\cite[Theorem~2.5.10]{figalli2021invitation} therefore yields
an optimal transport map $\mathcal{T}_n=\nabla\phi_n$ from
$\mathcal{L}(Z_n+V_n)$ to $\gamma^d$ with
$\phi_n:\mathbb{R}^d\to\mathbb{R}$ convex. Setting
$\xi_n:=\mathcal{T}_n(Z_n+V_n)$ gives $\xi_n\sim\gamma^d$ and
\[
\mathbb{E}\bigl[\|Z_n+V_n-\xi_n\|^2\bigr]
=
\mathcal{W}_2^2\bigl(\mathcal{L}(Z_n+V_n),\gamma^d\bigr),
\]
to which Lemma~\ref{lem:key-lem} applies and gives
\eqref{eq:key-lem-coupling}.
\end{proof}

\paragraph{Embedding into a Wiener process.}
Since the numerical variables
\(\{Z_{n}\}_{n \in [N]_{0} }\) are independent across refined
sub-steps and the optimal transport maps are measurable, applying
Lemma~\ref{lem:key-lem-coupling} successively over the refined grid yields an i.i.d.\ family
\(\{\xi_{t_{n,j}}\}_{n,j}\) with  law \(\gamma^d\).
By the standard isonormal Gaussian process construction \cite[Section 1.1]{nualart2006malliavin}, these
variables can be embedded into a Wiener process \(W\) such that
\[
\Delta W_{t_{n,j}}
=
\sqrt{\tfrac{h}{k}}\,\xi_{t_{n,j}} ,
\]
which we take as the driving process of the reverse SDE \eqref{eq:backward-SDE} throughout the proof of Theorem~\ref{thm:poisson}.

Up to this stage, we have established sufficient machinery to obtain the discretization error estimate between the PM auxiliary process \eqref{eq:poisson-aux} and the backward process \eqref{eq:backward-SDE} as below.

\begin{prop}
[Discretization error of PM method]
\label{prop:pi-poisson}
Suppose that Assumptions \ref{ass:weak-convex}, \ref{ass:one-side-lipschitz} and \ref{ass:lipschitz-time} hold. 
For any $n \in [N]_0, j\in[k-1]_0$, consider the process $\{X^N_{t_{n,j}}\}$ generated by \eqref{eq:poisson-aux} using the uniform step-size. For any $\lambda\in (0,1/4)$, let the step-size satisfy
\begin{equation}
\label{eq:h-condition}
h \leq
\tfrac{2(1-\lambda)}{3(2L+1)^2}.
\end{equation}
Then there exists a constant $C$, independent of $d$ and $h$, such that,
\begin{equation}
\begin{aligned}
\mathcal{W}_2
\big(
\mu,
\mathcal{L}(X^N_{T})
\big)
& \leq 
C
\mathrm{e}^{C
\eta(\alpha, M,3(2L+1)^2 h/2+\lambda)}
T^3
\big(
\tfrac{\sqrt{d}h}{k}
+ 
k d h^2
+
dh^{2}
+
\sqrt{d}h^{3/2}
\big).
\end{aligned}
\end{equation}
with $\eta\big(\alpha, M, 3(2L+1)^2 h/2+\lambda\big)$ as defined in \eqref{eq:eta-def}, where $L$ comes from Lemma \ref{lem:score-lipschitz}.
\end{prop}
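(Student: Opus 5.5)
We bound $\mathcal{W}_2(\mu,\mathcal{L}(X^N_T))$ by the $L^2$ error $\E[\|\varepsilon_{t_N}\|^2]^{1/2}$, where $\varepsilon_{t_{n,i}}=\overleftarrow{X}_{t_{n,i}}-X^N_{t_{n,i}}$ and the Wiener process driving \eqref{eq:backward-SDE} is the one built in the embedding paragraph, i.e.\ $\Delta W_{t_{n,i}}=\sqrt{h/k}\,\xi_{t_{n,i}}$ with $\xi_{t_{n,i}}$ the Brenier image of $Z_{t_{n,i}}+V_{t_{n,i}}$ from Lemma~\ref{lem:key-lem-coupling}. Here $V_{t_{n,i}}=\sqrt{hk/2}\,(B_{t_{n,i}}-1/k)\big(b_{t_{n,i}}(\tilde X^N_{t_{n,i}})-b_{t_n}(X^N_{t_n})\big)$, applied conditionally on $\mathcal{F}_{t_{n,i}}$ so that the discrepancy direction is deterministic. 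We then iterate the one-step recursion of Proposition~\ref{prop:Upper-mean-square error-bounds} over the refined grid. Each source term is bounded by a quantity of order $d$ times a power of $h$ and $k$, and the products of the $\gamma_{t_{n,i}}$ are controlled using the regime shift.

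\textbf{Source terms.} For each sub-step we need four bounds.
\begin{itemize}
\item \emph{$\E\|R_{t_{n,i}}\|^2$.} Using the Lipschitz bound $2L+1$ of $b_s$ (Lemma~\ref{lem:score-lipschitz}), Assumption~\ref{ass:lipschitz-time} for the time increment and Lemma~\ref{lem:target-p-order-moment}, we get $\E\|R_{t_{n,i}}\|^2\lesssim (h/k)^2\big(d\,h/k+h^2 d\big)$. The leading part comes from the Brownian increment of $\overleftarrow X_s-\overleftarrow X_{t_{n,i}}$.
\item \emph{$\E\|\E[R_{t_{n,i}}\mid\mathcal{F}_{t_{n,i}}]\|^2$.} Conditioning removes the martingale (Itô) part. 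This is the refined cross-term estimate, and it gives order $(h/k)^4 d+(h/k)^2h^2 d$. The first piece uses Itô's formula on $b_s(\overleftarrow X_s)$; if Hessian control is unavailable, we use the one-sided Lipschitz bound plus the time Lipschitz bound instead. After multiplying by $k/(\lambda h)$ and summing over $Nk$ steps, this contributes $d h^2/k^2+dh^2$ to the squared error, i.e.\ $\sqrt d h/k+\sqrt d h$ before the square root. To reach the claimed $\sqrt d h/k$ we must keep the time-derivative term in the form $\tilde L h\cdot h/k$. This is where most of the care is needed.
\item \emph{$\E\|M_{t_{n,i}}\|^2$.} It equals $\tfrac{2h}{k}\E\|Z+V-\xi\|^2$ plus the term $\tfrac{h^2}{k^2}(2L+1)^2\E\|\tilde X^N-X^N\|^2$. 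Lemma~\ref{lem:key-lem-coupling} with $\nu\lesssim hk\cdot\tfrac1k\,\E\|\Delta b\|^2$ gives $\nu\lesssim h\,T^{5/2}dh$ by Lemma~\ref{lem:prior-bound}. The quadratic term then contributes $\tfrac hk\nu^2\sim \tfrac hk d^2h^4$; summing over $Nk$ steps and taking the square root yields $dh^2$. The indicator term appears only when $5\beta^2>1$, and we bound that event by Markov's inequality, using the fourth moment bound \eqref{eq:b-error}. The resulting $k$ factor from $\beta\sim\sqrt{hk}\,\|\Delta b\|$ produces the $kdh^2$ term.
\item \emph{$\tfrac{(2L+1)^2h}{\lambda k}\E\|X^N-\tilde X^N\|^2$.} By Lemma~\ref{lem:prior-bound} this is $\lesssim \tfrac hk T^{5/2}dh^3$. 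Summing over $Nk$ steps gives $T^{7/2}dh^3$, whose square root is $\sqrt d h^{3/2}$.
\end{itemize}

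\textbf{Iteration.} The recursion has the form $e_{m+1}\le\gamma_m e_m+S_m$ with $e_0=0$, since $X^N_0=\overleftarrow X_0$. For $m\le N_h$, \eqref{eq:h-condition} guarantees $\gamma_m\le 1-(1-\rho)h/k$ with $\rho=3(2L+1)^2h/2+\lambda$, so the contractive segment accumulates at most $\sum_m S_m$. On the final window of length $\eta(\alpha,M,\rho)$, $\gamma_m\le 1+C h/k$, and the product is at most $\mathrm{e}^{C\eta}$. Hence $e_{Nk}\le \mathrm{e}^{C\eta}\sum_m S_m$. Taking square roots and absorbing powers of $T$ (from the moment bounds of Lemma~\ref{lem:numerical-p-order-moment}, which carry $T^5$) into $T^3$ gives the claim.

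\textbf{Main obstacle.} The main obstacle is the coupling/indicator step for $M_{t_{n,i}}$. Lemma~\ref{lem:key-lem} requires an almost-sure bound $\beta$ and a deterministic direction, whereas $\Delta b$ is random and unbounded. I would apply the lemma conditionally on $\mathcal{F}_{t_{n,i}}$ together with the Gaussian increments defining $\tilde X$. On the event $\{5\beta^2>1\}$, I would use the crude $2\nu$ bound combined with Chebyshev's inequality via the fourth moments, so that this event contributes only at higher order. A secondary difficulty is that conditioning on the $Z$'s entering $\tilde X$ conflicts with $Z_{t_{n,i}}$ being independent. This is resolved because $\tilde X_{t_{n,i}}$ depends only on $Z_{t_{n,j}}$ with $j<i$.
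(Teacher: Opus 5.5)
Most of your plan matches the paper's proof:
\begin{itemize}
\item the conditional Brenier coupling with Markov's inequality on $\{5\beta^2>1\}$ for $\E\|M_{t_{n,i}}\|^2$;
\item Lemma~\ref{lem:prior-bound} for the $\tilde X^N-X^N$ term;
\item the two-regime iteration. On the contractive segment, \eqref{eq:h-condition} gives $\gamma\le 1$, which is all you use; your sharper $\gamma\le 1-(1-\rho)h/k$ does not follow from it.
\end{itemize}
Your bound on $\E\|R_{t_{n,i}}\|^2$, via the spatial Lipschitz constant and Assumption~\ref{ass:lipschitz-time}, is also acceptable. It costs only $\sqrt{Td}\,h/k+\sqrt{Td}\,h^{3/2}/\sqrt{k}$.

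\textbf{The gap.} The conditional residual $\E\|\E[R_{t_{n,i}}\mid\mathcal F_{t_{n,i}}]\|^2$ is the step that produces the improved rate, and your plan leaves it open.
\begin{itemize}
\item Assumption~\ref{ass:lipschitz-time} controls the time variation of the score only at the macro-step scale, $\tilde L h(1+\|x\|)$, even though $s-t_{n,i}\le h/k$. So the time part of $\E[R_{t_{n,i}}\mid\mathcal F_{t_{n,i}}]$ has size $(h/k)\tilde L h\sqrt d$.
\item Multiplying by $k/(\lambda h)$ and summing over the $Nk$ sub-steps adds $Tdh^2$ to the squared error, i.e.\ a $\sqrt{Td}\,h$ term.
\item That term is absent from the claimed bound and is not dominated by it uniformly in $k$. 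It would cancel the gain from $k$ and give back the EM rate.
\item Your remedy, keeping the term ``in the form $\tilde L h\cdot h/k$'', describes exactly this term, so it repairs nothing.
\item The fallback is worse. Lipschitz bounds alone give $\|\E[b_s(\overleftarrow{X}_s)-b_s(\overleftarrow{X}_{t_{n,i}})\mid\mathcal F_{t_{n,i}}]\|_{L^2}\lesssim\sqrt{dh/k}$, which leads to an $O(\sqrt{Tdh/k})$ contribution.
\item A generic It\^o expansion of $b_s(\overleftarrow{X}_s)$ would additionally need third derivatives of $\log\tilde p$ and a sub-step bound on $\partial_s b_s$.
\end{itemize}

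\textbf{The missing idea.} No time regularity is needed here at all. Since $\tilde p_t=p_t/\gamma^d$ evolves under the Ornstein--Uhlenbeck semigroup, $u(t,x):=\log\tilde p_{T-t}(x)$ solves
\[
\partial_t u=-(\Delta u+\|\nabla u\|^2-\langle x,\nabla u\rangle).
\]
It\^o's formula along \eqref{eq:backward-SDE} (\cite[Proposition~2]{Conforti2025KLconvergence}, as used in the paper) then gives the exact identity
\[
\mathrm{d}\,b_s(\overleftarrow{X}_s)=\overleftarrow{X}_s\,\mathrm{d}s+\sqrt{2}\big(-I_d+2\nabla^2\log\tilde p_{T-s}(\overleftarrow{X}_s)\big)\,\mathrm{d}W_s .
\]
In this identity the time derivative, the third-order terms and the terms quadratic in $\nabla u$ all cancel. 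Consequently:
\begin{itemize}
\item Conditioning removes the stochastic integral. Jensen's inequality and Lemma~\ref{lem:target-p-order-moment} then give $\E\|\E[R_{t_{n,i}}\mid\mathcal F_{t_{n,i}}]\|^2\le\mathcal M_1 d\,(h/k)^4$, with no $\tilde L$ at all.
\item Up to $\mathrm{e}^{C\eta}$, this contributes $Tdh^2/(\lambda k^2)$ to the squared error, which is exactly the $\sqrt d\,h/k$ term.
\item The same identity, with $\|\nabla^2\log\tilde p_{T-s}\|_{\mathrm F}\le\sqrt d L$ from Lemma~\ref{lem:score-lipschitz}, also gives $\E\|R_{t_{n,i}}\|^2\lesssim d(h/k)^3$ without Assumption~\ref{ass:lipschitz-time}.
\item To use the identity up to $s=T$, work on $[0,T-\epsilon_2]$ and let $\epsilon_2\to0$ by Fatou's lemma.
\end{itemize}
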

\begin{proof}
By Proposition \ref{prop:Upper-mean-square error-bounds}, it remains to estimate the four residual terms in the one-step recursion.

\paragraph{Step I: estimate of \(R_{t_{n,i}}\).}

Fix $0\leq \epsilon_2 \leq h/k$, define $T$ to denote $T-\epsilon_2$ and $N$ to denote $(T-\epsilon_2)/h$. This transformation allows us to apply Lemma \ref{lem:score-lipschitz} and \cite[Proposition 2]{Conforti2025KLconvergence}.
Using the It\^o formula \cite[Proposition 2]{Conforti2025KLconvergence} yields  
\begin{equation}
    \begin{aligned}
        &\mathrm{d} 
        b_t(
        \overleftarrow{X}_t)  \\
        & =
        -\mathrm{d} 
        \overleftarrow{X}_t
        +
        \mathrm{d}
        \big(
        2 
        \nabla \log \tilde{p}_{T-t}(
        \overleftarrow{X}_t)
        \big)  \\
        & =
        \Big\{
        \overleftarrow{X}_t
        -
        2 \nabla \log \tilde{p}_{T-t}
        (\overleftarrow{X}_t)
        +
        2 \nabla \log \tilde{p}_{T-t}
        (
        \overleftarrow{X}_t
        )
        \Big\} 
        \,
        \mathrm{d} t
        +
        \sqrt{2}
        \big(
        {\color{black}-I_d}
        +
        2 \nabla^2 \log \tilde{p}_{T-t}
        (
        \overleftarrow{X}_t
        )
        \big) 
        \,
        \mathrm{d} W_t \\
        & =
        \overleftarrow{X}_t 
        \,
        \mathrm{d} t
        +
        \sqrt{2}
        \big(
        {\color{black}-I_d}+2 \nabla^2 \log \tilde{p}_{T-t}
        (
        \overleftarrow{X}_t
        )
        \big) 
        \,
        \mathrm{d} W_t.
    \end{aligned}
\end{equation}
Therefore, the integral form can be written as, for any $n \in [N-1]_0, \, i \in [k-1]_0$,
\begin{equation}
\label{eq:b(x,t)-b(s,y)}
         b_s
         (\overleftarrow{X}_s) 
        -
        b_{t_{n,i}}(\overleftarrow{X}_{t_{n,i}})
        =
        \int_{t_{n,i}}^s \overleftarrow{X}_r 
        \mathrm{\,d} r
        +
        \sqrt{2} 
        \int_{t_{n,i}}^s
        \big({\color{black}-I_d}+2 \nabla^2 \log \tilde{p}_{T-r}
        (
        \overleftarrow{X}_r)
        \big) 
        \,
        \mathrm{d} W_r.
\end{equation}
Substituting the above equation into $\mathbb{E}\big[ \|  R_{t_{n,i}} \|^2 \big]$ and using the H\"older inequality, the It\^o isometry \cite[Lemma 5.4]{mao2007stochastic} and Lemma \ref{lem:target-p-order-moment} gives
\begin{equation}
\begin{aligned}
\label{eq:R-error-bound}
& 
\mathbb{E}
\Big[
\big\|
R_{t_{n,i}}
\big\|^2
\Big]\\
& =
\mathbb{E}
\bigg[
\Big\|
\int_{t_{n,i}}^{t_{n,i+1}}
\big[
b_{s}(\overleftarrow{X}_s)
-
b_{t_{n,i}}(\overleftarrow{X}_{t_{n,i}})
\big] 
\mathrm{\,d}s
\Big\|^2
\bigg]\\
& =
\E
\bigg[
\Big\|
\int_{t_{n,i}}^{t_{n,i+1}}
\int_{t_{n,i}}^{s}
\overleftarrow{X}_r 
\,
\mathrm{d} r
+
\sqrt{2} 
\int_{t_{n,i}}^{t_{n,i+1}}
\int_{t_{n,i}}^{s}
\big({\color{black}-I_d}+2 \nabla^2 \log \tilde{p}_{T-r}
(\overleftarrow{X}_r)
\big)
\mathrm{d} W_r
{\color{black}\mathrm{\,d}s}
\Big\|^{2}
\bigg]\\
& \leq
\tfrac{2h^2}{k^2}
\E
\bigg[
\int_{t_{n,i}}^{t_{n,i+1}}
\int_{t_{n,i}}^{s}
\big\|
\overleftarrow{X}_r 
\big\|^{2}
\mathrm{d} r
\bigg]
+
\tfrac{4h}{k}
\,
\mathbb{E}
\bigg[
\int_{t_{n,i}}^{t_{n,i+1}}
\int_{t_{n,i}}^s
\big\|
{\color{black}-I_d}++2 \nabla^2 \log \tilde{p}_{T-r}
(\overleftarrow{X}_r)
\big) 
\big\|^2_{\mathrm{F}}
\, 
{\color{black}\mathrm{\,d}r}
\mathrm{\,d} s
\bigg]\\
& \leq
2
\big(
\mathcal{M}_1 +4(1+4 L^2) \big)
\tfrac{dh^3}{k^3},
\end{aligned}
\end{equation}
where the last inequality follows from the direct result of Lemma \ref{lem:score-lipschitz} that
\begin{align}
\big\|
\nabla^2 \log \tilde{p}_{T-t}
(\overleftarrow{X}_t)
\big\|_{\mathrm{F}} 
    \leq 
    \sqrt{d}
    \,
    \big\|\nabla^2 \log \tilde{p}_{T-t}
(\overleftarrow{X}_t)\big\|
    \leq 
    \sqrt{d}
    \,L,
    \quad 
    \forall  ~ t \in [0,T).
\end{align}
\paragraph{Step II:
estimate of 
$
\mathbb{E}
\big[
\big\|
\mathbb{E}
\big[
R_{t_{n,i}}
\mid 
\mathcal{F}_{t_{n,i}}
\big]
\big\|^2
\big]$.}
Noting that, for any $n\in [N-1]_0,\,i\in [k-1]_0$,
\begin{equation}
    \mathbb{E}
    \bigg[
    \int^s_{t_{n,i}}  
    \big({\color{black}-I_d}+
    +2 \nabla^2 \log \tilde{p}_{T-r}
    (
    \overleftarrow{X}_r
    )
    \big)  
    \mathrm{d} W_r
    \Big|
    \mathcal{F}_{t_{n,i}}
    \bigg]
    =0,
\end{equation}
one can utilize 
\eqref{eq:b(x,t)-b(s,y)}, the Jensen inequality, Lemma \ref{lem:target-p-order-moment} and the properties of the conditional expectation to show that,
\begin{align}
\label{eq:R-conditionded-error-bound}
\mathbb{E}
\Big[
\big\|
\mathbb{E}
\big[
R_{t_{n,i}}
\mid 
\mathcal{F}_{t_{n,i}}
\big]
\big\|^2
\Big]
& =
\mathbb{E}
\bigg[
\Big\|
\mathbb{E}
\Big[
\int_{t_{n,i}}^{t_{n,i+1}} 
\big[
b_s(\overleftarrow{X}_s)
-
b_{t_{n,i}}(\overleftarrow{X}_{t_{n, i}}) 
\big]
\mathrm{\,d} s 
\Big|
\mathcal{F}_{t_{n,i}}
\Big]
\Big\|^2
\bigg]\nonumber\\
& =
\mathbb{E}
\Bigg[
\bigg\|
\mathbb{E}
\Big[
\int_{t_{n,i}}^{t_{n,i+1}} 
\int_{t_{n,i}}^{s} 
\overleftarrow{X}_r
\mathrm{\,d} r 
\mathrm{\,d} s 
\Big|
\mathcal{F}_{t_{n,i}}
\Big]\nonumber\\
& \quad  +
\sqrt{2}
\,
\mathbb{E}
\Big[
\int_{t_{n,i}}^{t_{n,i+1}}  
\int_{t_{n,i}}^{s}  
\big(
{\color{black}-I_d}+2 \nabla^2 \log \tilde{p}_{T-r}
(\overleftarrow{X}_r)
\big)  
\mathrm{\,d} W_r 
\mathrm{\,d} s 
\Big|
\mathcal{F}_{t_{n,i}}
\Big] 
\bigg\|^2
\Bigg]\nonumber\\
& {\color{black}
\leq
\mathbb{E}
\bigg[
\mathbb{E}
\Big[
\Big\|
\int_{t_{n,i}}^{t_{n,i+1}} 
\int_{t_{n,i}}^{s} 
\overleftarrow{X}_r
\mathrm{\,d} r 
\mathrm{\,d} s 
\Big \|^2
\Big|
\mathcal{F}_{t_{n,i}}
\Big]
\bigg] 
} \nonumber\\
& {\color{black}
\leq
\tfrac{h^2}{k^2}
\mathbb{\,E}
\Bigg[
\int_{t_{n,i}}^{t_{n,i+1}} 
\int_{t_{n,i}}^{s} 
\big\|
\overleftarrow{X}_r
\big\|^2
\mathrm{\,d} r 
\mathrm{\,d} s 
\Bigg]
} \nonumber\\
& \leq
\tfrac{\mathcal{M}_1d h^4}{k^4}.
\end{align}
\paragraph{Step III:
estimate of 
$\mathbb{E}
\big[
\|
M_{t_{n,i}}
\|^2
\big]$.}
One can do the following decomposition:
\begin{equation}
\label{eq:R_2_decom}
\begin{aligned}
\mathbb{E}
\Big[
\big\|
M_{t_{n,i}}
\big\|^2
\Big]
& \leq
\tfrac{4h}{k}
\underbrace{
\E
\bigg[
\Big\|
Z_{t_{n,i}}
-
\xi_{t_{n,i}}
+
\sqrt{\tfrac{hk}{2}}
\big(
B_{t_{n,i}}-1/k
\big)
\Big( 
b_{t_{n,i}}(\tilde{X}_{t_{n,i}}^N)
-
b_{t_n}(X_{t_n}^N)
\Big)
\Big\|^2
\bigg]}_{=:\mathbb{J}_1}\\
& \quad +
\underbrace{
\tfrac{2h^2}{k^2}
\E
\Big[
\big\|
b_{t_{n,i}}(\tilde{X}_{t_{n,i}}^N)
-
b_{t_{n,i}}(X_{t_{n,i}}^N)
\big\|^2
\Big]}_{=:\mathbb{J}_2}.
\end{aligned}
\end{equation}

The above two items will be treated one by one. 
We estimate $\mathbb{J}_1$ first. 
By abuse of notation, we denote, for any $n \in [N-1]_0,\,j\in[k-1]_0,$ 
\[
V(x,y,B_{t_{n,i}})
:=
\sqrt{
\tfrac{hk}{2}
}
(
B_{t_{n,j}}
-
{1}/{k}
)
\Big(
b_{t_{n,i}}(x)
-
b_{t_n}(y)
\Big).
\]
Conditionally on $\mathcal{F}_{t_{n,i}}$, the perturbation $V(x,y, B_{t_{n,i}})$
is centered and supported on the line spanned by
$b_{t_{n,i}}(x)-b_{t_n}(y)$. Moreover, the corresponding quantities in Lemma~\ref{lem:key-lem} satisfy
\[
\nu_{t_{n,j}}{\color{black}(x,y)}
:= 
\E\Big[
\tfrac{hk}{2}
(
B_{t_{n,j}}
-
{1}/{k}
)^2
\,
\big\|
b_{t_{n,j}}
(x)
-
b_{t_{n}}(y)
\big\|^2
\Big],
\qquad
\beta_{t_{n,i}}^2 
{\color{black}(x,y)}
:=
\tfrac{hk}{2}
\big\|
b_{t_{n,i}}(x)
-
b_{t_n}(y)
\big\|^2 .
\]
Using Lemma~\ref{lem:key-lem-coupling}, the Cauchy--Schwarz inequality, and Lemma~\ref{lem:prior-bound}, we obtain
\begin{equation}
\begin{aligned}
\mathbb{J}_1
& =
\mathbb{E}
\Big[
\mathbb{E}
\Big[
\big\|
Z_{t_{n,i}}
-
{\xi}_{t_{n,i}}
+
V(x,y,B_{t_{n,i}})
\big\|^2
\big|
\tilde{X}_{t_{n,i}}^N=x, X_{t_n}^N=y
\Big]
\Big]\\
& \leq
\mathbb{E}
\Big[
\mathbf{1}_{\{5\beta_{t_{n,i}}^2>1\}} \cdot 2 
\nu_{t_{n,i}}
+
\tfrac{11}{2}
\nu_{t_{n,i}}^2
\Big]\\
& \leq
h
\sqrt{\mathbb{P}(5\beta_{t_{n,i}}^2>1)}
\Big(
\mathbb{E}
\big[
\big\|
b_{t_{n,i}}(\tilde{X}_{t_{n,i}}^N)
-
b_{t_n}(X_{t_n}^N)
\big\|^4
\big]
\Big)^{1/2}\\
& \quad 
+
\tfrac{11h^2}{8}
\mathbb{E}
\Big[
\big\|
b_{t_{n,i}}(\tilde{X}_{t_{n,i}}^N)
-
b_{t_n}(X_{t_n}^N)
\big\|^4
\Big]\\
& \leq
C(\mathcal{M}_2)
T^5
\big(
k d^{2} h^{4}
+
d^2h^4
\big),
\end{aligned}
\end{equation}
where the last step follows from Markov's inequality:
\[
\mathbb{P}(5\beta_{t_{n,i}}^2>1)
\leq 
\mathbb{E}
\big[(5\beta_{t_{n,i}}^2)^{2}\big]
\le 
C(\mathcal{M}_2)
T^{5}
k^2 d^2 h^{4}.
\]
For $\mathbb{J}_2$, the Lipschitz bound on $b_t$ and Lemma \ref{lem:prior-bound} imply
\begin{equation}
\begin{aligned}
\mathbb{J}_2
& \leq
\tfrac{(2 L+1)^2 h^2}{k^2}
\E
\Big[
\big\|
\tilde{X}_{t_{n,i}}^N
-
X_{t_{n,i}}^N
\big\|^2
\Big]
\leq
\tfrac{C(\mathcal{M}_2)T^{5/2} d h^5}{k^2} 
 \leq 
\tfrac{C(\mathcal{M}_2) T^{5/2}d h^4}{k}.
\end{aligned}
\end{equation}
Combining the estimates of $\mathbb{J}_1$ and $\mathbb{J}_2$ yields
\begin{equation}
\mathbb{E}
\Big[
\big\|
M_{t_{n,i}}
\big\|^2
\Big]
\leq
C(\mathcal{M}_2)
T^{5}
\big(
d^{2} h^{5}
+
\tfrac{d^2 h^5}{k}
+
\tfrac{dh^4}{k}
\big).
\end{equation}
With regard to the last term of \eqref{eq:iteration_error}, we use Lemma \ref{lem:prior-bound} to directly deduce
\begin{equation}
\begin{aligned}
\tfrac{(2 L+1)^2h}{\lambda k}
        \mathbb{E}
        \Big[
        \big\|
        X_{t_{n,i}}^N
         -
        \tilde{X}_{t_{n,i}}^N
        \big\|^2
        \Big]
\leq
\tfrac{C(\mathcal{M}_2)
T^{5} dh^4}
{\lambda k}.
\end{aligned}
\end{equation}

\paragraph{Step IV: global recursion.}

By virtue of Proposition \ref{prop:Upper-mean-square error-bounds} and the above estimates, iterating and invoking the fact that $\overleftarrow{X}_0 = X_0^N$, we obtain
\begin{equation}
\label{eq:iterate-poissson}
\begin{aligned}
&
\mathbb{E}
\big[
\|\varepsilon_{T}\|^2
\big]\\
& \leq
\gamma_{(Nk-1)h/k} 
\mathbb{\,E}
\big[
\|
\varepsilon_{(Nk-1)h/k}
\|^2
\big]
+
C(\mathcal{M}_2)
T^5
\Big(
\tfrac{d h^3}{k^3}
+
d^{2} h^{5}
+
\tfrac{d^2 h^5}{k}
+
\tfrac{dh^4}{k}
\Big)\\
& \leq  
\prod_{\ell=0}^{Nk-1}
\gamma_{\ell h/k}
\mathbb{\,E}
\big[
\|\varepsilon_{0}\|^2
\big]
+
C(\mathcal{M}_2)
T^5
\Big(
\tfrac{d h^3}{k^3}
+
d^{2} h^{5}
+
\tfrac{d^2 h^5}{k}
+
\tfrac{dh^4}{k}
\Big)
\Big(
\sum_{w=1}^{Nk-1} 
\prod_{\ell=w}^{Nk-1}  
\gamma_{\ell h/k}
+1
\Big).
\end{aligned}
\end{equation}
Now we are left with the estimation of $\big(
\sum_{w=1}^{Nk-1} 
\prod_{\ell=w}^{Nk-1}  
\gamma_{\ell h/k}
+1
\big)$. 
%
Note that if $h$ satisfies \eqref{eq:h-condition}, then we can take $\rho^{(2)}_h:= 3(2L+1)^2 h/2+\lambda$, such that $\rho^{(2)}_h \in (\lambda,1).$ 
Otherwise said, we have 
\begin{equation}
\tfrac{h}{k}
\leq   
h 
=
\frac{2\rho_h^{(2)}-2\lambda}
{3(2L+1)^2}
\leq
\frac{2(2C_{T-{t_{n,i}}}+1)-2\lambda}
{3(2L+1)^2},
\end{equation}
i.e., $\gamma_{nh+ih/k }\leq 1 \text { for } nk+i  \leq N_h.$
Whereas, for $nk+i  > N_h $, Lemma \ref{lem:score-lipschitz} yields 
\begin{equation}
    \gamma_{t_n}
    =
    1+
    \big(
    \tfrac{3(2L+1)^2 h}{k}
    +2(2L+1)+ 2\lambda\big)
    \tfrac{h}{k} 
    \leq 
    1+
    \big(
    5(2L+1)^2+2\lambda
    \big)
    \tfrac{h}{k}.
\end{equation}
Combining the previous remarks, we have
\begin{equation}
\label{eq:recusion}
\begin{aligned}
&
\sum_{w=1}^{Nk-1} 
\prod_{\ell=w}^{Nk-1}  
\gamma_{\ell h/k}
+1 \\
& =
\sum_{w=1}^{N_h-1} 
\prod_{\ell=w}^{N_h-1} 
\gamma_{\ell h/k} 
\times 
\prod_{\ell=N_h}^{Nk-1} 
\gamma_{\ell h/k}
+
\sum_{w=N_h}^{Nk-1} 
\prod_{\ell=w}^{Nk-1} 
\gamma_{\ell h/k} 
+1\\
& \leq 
(N_h-1)
\prod_{\ell=N_h}^{Nk-1} 
\gamma_{\ell h/k}
        +
        \sum_{w=N_h}^{Nk-1} \prod_{\ell=w}^{Nk-1} \gamma_{\ell h/k}
        +1
        \\
        & \leq 
        (N_h-1) 
        \Big(
        1+
        \big(
        5(2L+1)^2+2\lambda
        \big)
        \tfrac{h}{k}
        \Big)^{Nk-N_h}
        +
        \sum_{w=N_h}^{Nk-1}
        \Big(
        1+
        \big(
        5(2L+1)^2+2\lambda
        \big)
        \tfrac{h}{k}
        \Big)^{Nk-w} 
        +1\\
        & \leq 
        (N_h-1)
        \Big(
        1+
        \big(
        5(2L+1)^2+2\lambda
        \big)
        \tfrac{h}{k}
        \Big)^{Nk-N_h}
        +
        \sum_{w=1}^{Nk-N_h}
        \Big(
        1+
        \big(
        5(2L+1)^2+2\lambda
        \big)
        \tfrac{h}{k}
        \Big)^w
        +1\\
        & =
        (N_h-1)
        \Big(
        1+
        \big(
        5(2L+1)^2+2\lambda
        \big)
        \tfrac{h}{k}
        \Big)^{Nk-N_h}\\
        & \quad +
        \tfrac{
        \big(
        1+
        (
        5(2L+1)^2+2\lambda
        )
        \frac{h}{k}
        \big)
        \big(1-\big(
        1+
        (
        5(2L+1)^2+2\lambda
        )
        \frac{h}{k}
        \big)^{Nk-N_h}\big)}
        {1-\big(
        1+
        (
        5(2L+1)^2+2\lambda
        )
        \frac{h}{k}
        \big)}
        +1\\
        & \leq
        \left(
        N_h
        +
        \tfrac{k}
        {(5(2L+1)^2+2\lambda)h}
        \right) \times
        \Big(
        1+
        \big(
        5(2L+1)^2+2\lambda
        \big)
        \tfrac{h}{k}
        \Big)^{Nk-N_h+1} \\
        & \leq
        \left(
        \tfrac{(T-\eta(\alpha, M,\rho_h ))k}
        {h}
        +
        \tfrac{k}
        {(5(2L+1)^2+2\lambda)h}
        \right) 
        \mathrm{e}^{
        (
        5(2L+1)^2+2\lambda
        )
        \eta(\alpha, M,\rho_h)}.
    \end{aligned}
\end{equation}
Putting the above estimate into \eqref{eq:iterate-poissson}, we get
\begin{equation}
\begin{aligned}
\mathcal{W}^{2}_2
\big(
\mu,
\mathcal{L}(X_T^N)
\big)
&\leq
\mathbb{E}
\Big[
\big\|
\overleftarrow{X}_{t_N}
-
X^N_{t_N}
\big\|^2
\Big] \\
& \leq
C(\mathcal{M}_2)
T^5
\Big(
\tfrac{d h^3}{k^3}
+
d^{2} h^{5}
+
\tfrac{d^2 h^5}{k}
+
\tfrac{dh^4}{k}
\Big)\\
& \quad
\cdot
\left(
\tfrac{(T-\eta(\alpha, M,\rho^{(2)}_h ))k}
        {h}
        +
        \tfrac{k}
        {(5(2L+1)^2+2\lambda)h}
        \right) 
        \mathrm{e}^{
        (
        5(2L+1)^2+2\lambda
        )
        \eta(\alpha, M,\rho^{(2)}_h)}\\
& \leq
C(\mathcal{M}_2)
T^6
\big(
\tfrac{d h^2}{ k^2}
+
k d^{2} h^{4}
+
d^2 h^4
+
d h^3
\big)
\mathrm{e}^{(5(2L+1)^2+2\lambda)
\eta(\alpha, M,\rho^{(2)}_h)},
\end{aligned}
\end{equation}
where, in the last inequality, we have used the fact that 
\begin{equation}
\hat{T}\big(\alpha,M,\rho^{(2)}_h\big)
=
T-
\eta(\alpha,M,\rho^{(2)}_h)
<
T-
\eta\big(\alpha,M,0\big)
=
\hat{T}(\alpha,M,0)
\leq
T.
\end{equation}
Now, letting  $\epsilon_2 $ tend to zero and applying Fatou's lemma, we complete the proof.
\end{proof}

\subsection{Discretization error of EM method}
\label{appen:Discretization-error-em}
In this subsection, we present an enhanced discretization error bound for the EM method. 
%
{\color{black}
When \(k=1\), the midpoint correction term vanishes, and the PM scheme \eqref{eq:poisson-method}
reduces to the classical EM scheme \eqref{eq:EM}.
}
Therefore, 
{\color{black}it is straightforward to obtain the following proposition with the same technique as in Proposition \ref{prop:pi-poisson}}

\begin{prop}
[Discretization error of EM method]
\label{prop:pi-em}
Suppose that Assumptions \ref{ass:weak-convex} and \ref{ass:one-side-lipschitz} hold. 
For any $n \in [N]_0$, consider the process $\{X^N_{t_{n}}\}$ generated by \eqref{eq:em-aux} using the uniform step-size. For any $\lambda\in (0,1)$, let the step-size satisfy
\begin{equation}
\label{eq:em-h-condition}
h \leq
\tfrac{2-\lambda}{2(2L+1)^2}.
\end{equation}
Then there exists a constant $C$, independent of $d$ and $h$, such that,
\begin{equation}
\begin{aligned}
& 
\mathcal{W}_2
\big(
\mu,
\mathcal{L}(X^N_{t_N})
\big)
\leq 
C
\mathrm{e}^{C
\eta(\alpha, M,(2L+1)^2h+\lambda/2)}
\sqrt{T d}h,
\end{aligned}
\end{equation}
with $\eta(\alpha, M,(2L+1)^2h+\lambda/2)$ as defined in \eqref{eq:eta-def}, where $L$ comes from Lemma \ref{lem:score-lipschitz}.
\end{prop}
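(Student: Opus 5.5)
I will follow Proposition \ref{prop:pi-poisson} with $k=1$, synchronously coupling the EM auxiliary process \eqref{eq:em-aux} with the exact reverse SDE \eqref{eq:backward-SDE} driven by the same Brownian increments $\Delta W_{t_n}$. With $k=1$ the Bernoulli correction disappears and no optimal-transport coupling is needed, so $M_{t_n}\equiv 0$ and $\tilde X^N_{t_n}=X^N_{t_n}$. Setting $\varepsilon_{t_n}:=\overleftarrow{X}_{t_n}-X^N_{t_n}$ and writing $\overleftarrow{X}_{t_{n+1}}=\overleftarrow{X}_{t_n}+h b_{t_n}(\overleftarrow{X}_{t_n})+\sqrt2\Delta W_{t_n}+R_{t_n}$, we get the exact error recursion
\[
\varepsilon_{t_{n+1}}=\varepsilon_{t_n}+h\big(b_{t_n}(\overleftarrow{X}_{t_n})-b_{t_n}(X^N_{t_n})\big)+R_{t_n}.
\]
Squaring and taking expectations, I will bound the terms as follows. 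The drift cross term is controlled by Proposition \ref{prop:regime-shift-drift}: it contributes $-2(2C_{T-t_n}+1)h$ for $n\le N_h$ and $2(2L+1)h$ otherwise. The quadratic drift term and the drift--residual cross term are controlled by the Lipschitz bound of Lemma \ref{lem:score-lipschitz}, giving $(2L+1)^2h^2$ up to constants and using Young on the cross term. The key term is $2\E\langle\varepsilon_{t_n},R_{t_n}\rangle$. Here I replace $R_{t_n}$ by $\E[R_{t_n}\mid\mathcal F_{t_n}]$ and apply Young with weight $\lambda h/2$, which gives $\tfrac{\lambda h}{2}\E\|\varepsilon_{t_n}\|^2+\tfrac{2}{\lambda h}\E\|\E[R_{t_n}\mid\mathcal F_{t_n}]\|^2$. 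The result is
\[
\E\|\varepsilon_{t_{n+1}}\|^2\le\gamma_{t_n}\E\|\varepsilon_{t_n}\|^2+\tfrac{2}{\lambda h}\E\|\E[R_{t_n}\mid\mathcal F_{t_n}]\|^2+2\E\|R_{t_n}\|^2,
\]
with $\gamma_{t_n}=1+(2(2L+1)^2h-2(2C_{T-t_n}+1)+\lambda)h$ on the contractive regime.

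For the residuals I reuse Steps I and II of Proposition \ref{prop:pi-poisson}, applied on $[0,T-\epsilon_2]$ with $\epsilon_2\to0$ and Fatou at the end. The Itô expansion \eqref{eq:b(x,t)-b(s,y)} splits $b_s(\overleftarrow{X}_s)-b_{t_n}(\overleftarrow{X}_{t_n})$ into $\int\overleftarrow{X}_r\,\mathrm dr$ plus a martingale term. Using the Frobenius bound $\|\cdot\|_{\mathrm F}\le\sqrt d L$ and Lemma \ref{lem:target-p-order-moment}, this gives $\E\|R_{t_n}\|^2\le C d h^3$. The martingale part vanishes under conditioning, so $\E\|\E[R_{t_n}\mid\mathcal F_{t_n}]\|^2\le C d h^4$. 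The per-step forcing is therefore $C d h^3$, and this martingale cancellation is exactly what removes the $h^{1/2}$ loss present in \cite{Silveri2025beyond}.

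To iterate, I will choose $\rho_h=(2L+1)^2h+\lambda/2$. The step-size condition \eqref{eq:em-h-condition} gives $\rho_h<1$, and by the definition of $\widehat T(\alpha,M,\rho_h)$ we have $2C_{T-t_n}+1\ge\rho_h$ for $n\le N_h$, so $\gamma_{t_n}\le1$ there. After $N_h$, I use $\gamma_{t_n}\le 1+Ch$ over a window of length $\eta(\alpha,M,\rho_h)$. Exactly as in \eqref{eq:recusion}, since $\varepsilon_0=0$ this gives $\sum_w\prod_\ell\gamma_\ell\le C(T/h)\mathrm e^{C\eta}$. Hence $\E\|\varepsilon_{t_N}\|^2\le C\mathrm e^{C\eta}\,Td h^2$, and taking square roots yields $\sqrt{Td}\,h$.

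The main obstacle is the bookkeeping that keeps $\gamma_{t_n}\le1$ on the contractive regime. The step-size condition has to absorb both the $h^2$ terms and the $\lambda h$ from Young, which is precisely what the choice of $\rho_h$ matching the $\eta$-argument in the statement accomplishes. I also need to make sure the conditional-expectation trick is legitimate, i.e. that $\varepsilon_{t_n}$ is $\mathcal F_{t_n}$-measurable under the synchronous coupling; this holds because both processes are adapted to the same Brownian filtration.
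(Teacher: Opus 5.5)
Your proposal is correct and follows the paper's proof essentially step for step: synchronous coupling, the one-step $L^2$ expansion controlled by Proposition~\ref{prop:regime-shift-drift}, the conditional-expectation/Young trick on $2\E\langle\varepsilon_{t_n},\mathcal{R}_{t_n}\rangle$ combined with the It\^o expansion of $b_s(\overleftarrow{X}_s)$ to obtain the $dh^3$ and $dh^4$ residual bounds, and the same choice $\rho_h=(2L+1)^2h+\lambda/2$ to make $\gamma_{t_n}\le 1$ on the contractive regime before iterating as in \eqref{eq:recusion}. The only nit, which the paper shares, is that at the endpoint of the step-size condition you get $\rho_h\le 1$ rather than $\rho_h<1$.
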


\begin{proof}

Using the same arguments as used in Proposition \ref{prop:Upper-mean-square error-bounds}, we first recast the backward SDE \eqref{eq:backward-SDE} as
\begin{equation}
\overleftarrow{X}_{t_{n+1}}
=
\overleftarrow{X}_{t_{n}}
+
h
b_{t_n}(\overleftarrow{X}_{t_n})
+
\sqrt{2}
\Delta W_{t_n}
+
\mathcal{R}_{t_{n}},
\end{equation}
where 
\begin{equation}
\mathcal{R}_{t_{n}}
=
\int_{t_n}^{t_{n+1}}
\big[
b_{s}(\overleftarrow{X}_s)
-
b_{t_n}(\overleftarrow{X}_{t_n})
\big] 
\mathrm{\,d}s.
\end{equation}
Using the same notation as before {\color{black} yields}, 
for any $n \in [N]_0$,
\begin{equation}
\varepsilon_{t_{n}}
:= 
\overleftarrow{X}_{t_{n}}-X^N_{t_{n}}.
\end{equation}
Subtracting \eqref{eq:em-aux} from this and taking expectations on both sides 
{\color{black}lead to}
\begin{equation}
\label{eq:error-em}
\begin{aligned}
\E
\big[
\|
\varepsilon_{t_{n+1}}
\|^2
\big]
& =  
\E
\big[
\|
\varepsilon_{t_{n}}
\|^2
\big]
+ 
h^2
\,
\E
\Big[
\big\|
b_{t_n}
(\overleftarrow{X}_{t_n})
-
b_{t_n}
(X^N_{t_n})
\big\|^2
\Big]
+
\E
\Big[
\big\|
\mathcal{R}_{t_{n}}
\big\|^2
\Big]\\
& \quad +
2h
\mathbb{\,E} 
\Big[
\big  \langle
\varepsilon_{t_{n}},
b_{t_n}
(\overleftarrow{X}_{t_n})
-
b_{t_n}
(X^N_{t_n})
\big \rangle
\Big]
+
2
\mathbb{\,E} 
\Big[
\big  \langle
\varepsilon_{t_{n}},
\mathcal{R}_{t_n}
\big \rangle
\Big]\\
& \quad +
2h
\mathbb{\,E} 
\Big[
\big  \langle
b_{t_n}
(\overleftarrow{X}_{t_n})
-
b_{t_n}
(X^N_{t_n}),
\mathcal{R}_{t_n}
\big \rangle
\Big].
\end{aligned}
\end{equation}
For the estimate of the forth term of \eqref{eq:error-em}, using Proposition \ref{prop:regime-shift-drift} gets, 
\begin{equation}
\begin{aligned}
\label{eq:em-cross-term-est}
2h
\mathbb{\,E} 
\Big[
\big  \langle
\varepsilon_{t_{n}},
b_{t_n}
(\overleftarrow{X}_{t_n})
-
b_{t_n}
(X^N_{t_n})
\big \rangle
\Big]
& \leq 
\begin{cases}
-2
(2C_{T-t_n}+1)
h
\,
\E
\big[
\|
\varepsilon_{t_{n}}
\|^2
\big], 
\quad & \text{for} \quad n \leq N_h,
\\[5pt]
2
(2L+1)
h
\,
\E
\big[
\|
\varepsilon_{t_{n}}
\|^2
\big],
& \text{for}\quad  n > N_h.
\end{cases}
\end{aligned}
\end{equation}
The remaining terms are handled following the same approach as in the proof of Proposition \ref{prop:pi-poisson}, namely by employing the Young inequality and the Lipschitz condition of $b_t(\cdot)$.
Plugging the previous estimates into \eqref{eq:error-em} yields
\begin{equation}
\label{eq:error-em-r}
\begin{aligned}
\E
\big[
\|
\varepsilon_{t_{n+1}}
\|^2
\big]
& \leq  
\gamma_{t_n}
\,
\E
\big[
\|
\varepsilon_{t_{n}}
\|^2
\big]
+
2\,
\E
\big[
\|
\mathcal{R}_{t_{n}}
\|^2
\big]
+
\tfrac{1}{\lambda h} 
\mathbb{E}
\Big[
\big\|
\mathbb{E}
\big[
\mathcal{R}_{t_n} 
\big| 
\mathcal{F}_{t_{
n}}
\big]
\big\|
\Big]^2,
\end{aligned}
\end{equation}
where
\begin{align}
\gamma_{t_{n}}
: =
\begin{cases}
1+
\big(
2(2L+1)^2h-2(2C_{T-t_n}+1) +\lambda\big)h,
& \text { for }
\quad n \leq N_h 
\\[5pt]
1+
\big(
4(2L+1)^2+\lambda\big)h,
& \text { for } 
\quad n > N_h .
\end{cases}
\end{align}
Following the same arguments as used in the estimate of \eqref{eq:R-error-bound} and \eqref{eq:R-conditionded-error-bound} in Proposition \ref{prop:pi-poisson}, one derives 
\begin{equation}
2\,
\E
\big[
\|
\mathcal{R}_{t_{n}}
\|^2
\big]
\leq 
2 
\big(
\mathcal{M}_1 +4(1+4 L^2) \big)
d h^{3},
\quad 
\tfrac{1}{\lambda h}
\mathbb{E}
\Big[
\big\|
\mathbb{E}
[
\mathcal{R}_{t_n}\mid \mathcal{F}_{t_n}
]
\big\|^2
\Big]
\leq 
\tfrac{\mathcal{M}_1d h^3}
{\lambda }.
\end{equation}
Since $h$ satisfies \eqref{eq:em-h-condition}, then we take $\rho^{(3)}_h:=(2(2L+1)^2h+\lambda) /2$, such that $\rho^{(3)}_h \in (\lambda/2,1)$. Then, we have
\[
h= 
\frac{2\rho_h-\lambda}
{2(2L+1)^2}
\leq 
\frac{2(2C_{T-t_n}+1)-\lambda}
{2(2L+1)^2},
\]
i.e., $\gamma_{t_n}\leq 1$ for $n \leq N_h.$
Combining the above estimates with \eqref{eq:error-em-r} yields 
\begin{equation}
\begin{aligned}
\mathcal{W}^{2}_2
\big(
\mu,
\mathcal{L}(X^N_{t_N})
\big) 
&\leq
\E
\big[
\|
\varepsilon_{t_{n+1}}
\|^2
\big] \\
& \leq  
\gamma_{t_n}
\,
\E
\big[
\|
\varepsilon_{t_{n}}
\|^2
\big]
+
C_{\mathcal{M}_1,L,\lambda}d h^3\\
& \leq
\prod_{\ell=0}^{N-1} 
\gamma_{t_\ell}
\,
\E
\big[
\|
\varepsilon_{0}
\|^2
\big]
+
C_{\mathcal{M}_1,L,\lambda}d h^3
\big(
\sum_{m=1}^{N-1} 
\prod_{\ell=m}^{N-1}  
\gamma_{t_\ell}
+1
\big)\\
& \leq
C_{\mathcal{M}_1,L,\lambda}d h^2
\left(
\hat{T}(\alpha,M,0)
+
\tfrac{1}
{C_{L,\lambda}}
\right) 
\mathrm{e}^{(
4(2L+1)^2+\lambda)
\eta(\alpha, M,\rho^{(3)}_h)},
\end{aligned}
\end{equation}
where we also used 
$\overleftarrow{X}_{0} - X^N_{0} = 0$
and an argument similar to the proof of \eqref{eq:recusion}.
The proof of Proposition \ref{prop:pi-em} is completed.
\end{proof}

\section{Bound on initialization error}
Before discussing the initialization error, we mention that we consider the coupling between the continuous time interpolations $\{X^N_{t_{n,j}}\}_{n \in[N]_0, j \in [k-1]_0}$ and $\{X^\infty_{t_{n,j}}\}_{n \in[N]_0, j \in [k-1]_0}$ with initialization satisfying 
\[
\mathcal{W}^2_2
\big(
\mathcal{L}(\overrightarrow{X}_T), \gamma^d
\big)
=
\E
\Big[
\big\|
{X}^N_{0}
-
X^\infty_{0}
\big\|^2\Big].
\]
We are now ready to present an initialization error bound for two types of methods.

\subsection{Initialization error of PM method}
\begin{prop}
[Initialization error of PM method]
\label{prop:pi-X_T}
Suppose that Assumptions \ref{ass:weak-convex} and \ref{ass:one-side-lipschitz} hold. For any $n \in [N]_0, \,j\in[k-1]_0$,
let $\{X^N_{t_{n,j}}\}$ and $\{X^\infty_{t_{n,j}}\}$ be produced by \eqref{eq:poisson-aux} with the uniform step-size 
\begin{equation}
\label{eq:h-condition-initial-error}
0<
h <
\tfrac{1}{C(L,\alpha)},
\end{equation}
where $C(L,\alpha)$ comes from \eqref{eq:C-def}.
Then there exists a constant $C$, independent of $d$ and $h$, such that,
\begin{equation}
\mathcal{W}_2
\big(
\mathcal{L}(X^N_{t_N}),
\mathcal{L}(X^{\infty}_{t_N})
\big)
\leq 
C
\mathrm{e}^{C
\eta(\alpha, M,C(L,\alpha)h)}
\mathrm{e}^{-T}
\mathcal{W}_2(\mathrm{\mu},\gamma^d),
\end{equation}
with $\eta\big(\alpha, M,C(L,\alpha)h\big)$ as defined in \eqref{eq:eta-def}, where $L$ comes from Lemma \ref{lem:score-lipschitz}.
\end{prop}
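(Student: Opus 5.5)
The plan is a synchronous coupling. I will run $X^N$ and $X^\infty$ with the same Gaussians $\{Z_{t_{n,i}}\}$ and the same Bernoulli variables $\{B_{t_{n,i}}\}$, and start them from the $\mathcal{W}_2$-optimal coupling of $\mathcal{L}(\overleftarrow{X}_0)=\mathcal{L}(\overrightarrow{X}_T)$ and $\gamma^d$. Then $\delta_{t_n}:=X^N_{t_n}-X^\infty_{t_n}$ satisfies a noise-free recursion, and it suffices to derive a one-step contraction/expansion bound on $\E[\|\delta_{t_n}\|^2]$ at the macro-step level and iterate it.

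The difference of the intermediate points is
\[
\tilde X^N_{t_{n,i}}-\tilde X^\infty_{t_{n,i}}=\delta_{t_n}+\tfrac{ih}{k}\big(b_{t_n}(X^N_{t_n})-b_{t_n}(X^\infty_{t_n})\big),
\]
so by Lemma \ref{lem:score-lipschitz} its norm is at most $(1+(2L+1)h)\|\delta_{t_n}\|$. Summing the $k$ sub-steps gives
\[
\delta_{t_{n+1}}=\delta_{t_n}+h\,\Delta b_{t_n}+h\sum_{j}B_{t_{n,j}}\big(\Delta b_{t_{n,j}}(\tilde X)-\Delta b_{t_n}\big),
\]
where $\Delta b$ denotes the difference of the drift evaluated along the two chains. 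I will expand $\|\delta_{t_{n+1}}\|^2$ as follows.
\begin{itemize}
\item The cross term $2h\langle\delta_{t_n},\Delta b_{t_n}\rangle$ is handled by Proposition \ref{prop:regime-shift-drift}. It gives $-2(2C_{T-t_n}+1)h\|\delta_{t_n}\|^2$ in the contractive regime and $2(2L+1)h\|\delta_{t_n}\|^2$ otherwise.
\item The Bernoulli correction term is bounded in norm by $2(2L+1)(1+(2L+1)h)\,\|\delta_{t_n}\|\,h\sum_j B_{t_{n,j}}$.
\end{itemize}
Since the $B$'s are independent of $\mathcal{F}_{t_n}$, with $\E[\sum_j B_{t_{n,j}}]=1$ and $\E[(\sum_j B_{t_{n,j}})^2]\le2$, taking conditional expectations turns all the remaining quadratic terms into $O(h^2)\|\delta_{t_n}\|^2$.

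The one genuinely delicate point is the cross term between $\delta_{t_n}$ and the Bernoulli correction, which is only $O(h)$ rather than $O(h^2)$. I would rewrite it as $h\sum_j B_{t_{n,j}}\big(\Delta b_{t_{n,j}}(\tilde X)-\Delta b_{t_{n,j}}(X_{t_n})\big)+h\sum_j B_{t_{n,j}}\big(\Delta b_{t_{n,j}}(X_{t_n})-\Delta b_{t_n}\big)$.
\begin{itemize}
\item The first piece is $O(h^2)\|\delta_{t_n}\|$ by Lipschitzness and the displacement bound on the intermediate points.
\item The second piece needs a time-regularity estimate for the spatial difference $\nabla\log\tilde p_{T-s}(x)-\nabla\log\tilde p_{T-t_n}(y)$.
\end{itemize}
Assumption \ref{ass:lipschitz-time} only controls the time variation pointwise, not the variation of a difference, and the proposition does not assume it. So my first attempt is cruder: in its expectation $\tfrac{h}{k}\sum_j\big(\Delta b_{t_{n,j}}(X_{t_n})-\Delta b_{t_n}\big)$, I bound each drift difference by Lipschitzness alone. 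This gives the $(2L+1)$-weighted drift differences averaged over the batch, which adds an $O(h)$ term to the contractive rate. Such terms are exactly what the constant $C(L,\alpha)$ in \eqref{eq:C-def} absorbs. The step-size condition $h<1/C(L,\alpha)$ then ensures the one-step factor satisfies $\gamma_{t_n}\le 1+\big(-2(2C_{T-t_n}+1)+C(L,\alpha)h\big)h$ on the contractive interval (with threshold $\rho=C(L,\alpha)h$), and $\gamma_{t_n}\le1+C'h$ on the final window of length $\eta(\alpha,M,C(L,\alpha)h)$.

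To iterate, I multiply the factors. On the non-contractive window the product is at most $\mathrm{e}^{C\eta}$. On the contractive interval I keep the actual rate $2(2C_{T-t}+1)$, which tends to $2$ as $T-t\to\infty$ by the formula \eqref{eq:c-t-def}. Its integral over $[0,\widehat T]$ is $2T-O(1)$, where the $O(1)$ depends only on $\alpha,M$ because $C_s+1\to1$ exponentially fast. This yields $\prod\gamma\le C\mathrm{e}^{C\eta}\mathrm{e}^{-2T}$, and taking square roots gives the claimed bound. The main obstacle is retaining this sharp rate $2C_{T-t}+1\approx1$, rather than just $\gamma\le1$, while keeping the $O(h)$ discretization perturbations from eroding the exponent beyond a constant factor. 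I would control this with $\sum_n h^2\lesssim Th$, which I will absorb via $h<1/C(L,\alpha)$ into the constant $C$.
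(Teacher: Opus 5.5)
Your plan breaks down at the step you flagged as delicate, and the proposed workaround does not repair it.

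In your split of the Bernoulli correction, the first piece is a second difference $[b_{t_{n,j}}(\tilde X^N_{t_{n,j}})-b_{t_{n,j}}(\tilde X^\infty_{t_{n,j}})]-[b_{t_{n,j}}(X^N_{t_n})-b_{t_{n,j}}(X^\infty_{t_n})]$. The intermediate points are displaced from $X^N_{t_n},X^\infty_{t_n}$ mainly by the common Gaussian increment $\sqrt{2h/k}\sum_{l<j}Z_{t_{n,l}}$, which is not proportional to $\delta_{t_n}$. So Lipschitzness of $b$ gives only $O(\|\delta_{t_n}\|)$ here, not $O(h)\|\delta_{t_n}\|$; the latter would need second-order regularity of the score.

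For the second piece, the crude Lipschitz bound adds about $4(2L+1)h\,\|\delta_{t_n}\|^2$ to the one-step recursion. That is an order-one addition to the rate, not an $O(h)$ one. The constant $C(L,\alpha)$ enters $\Gamma_{t_n}=1+\big(C(L,\alpha)h-\min_j(2C_{T-t_{n,j}}+1)\big)h$ only at order $h^2$, so it cannot absorb it. Since $2C_{T-t}+1\le 1\vee(2\alpha-1)$ is much smaller than $2L+1$, contraction on $[0,\widehat T]$ is lost entirely.

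The missing idea is to never apply Proposition~\ref{prop:regime-shift-drift} to $\langle\delta_{t_n},\Delta b_{t_n}\rangle$. Instead write $\delta_{t_{n+1}}=\delta_{t_n}+h\big(1-\sum_jB_{t_{n,j}}\big)\Delta b_{t_n}+h\sum_jB_{t_{n,j}}\Delta b_{t_{n,j}}(\tilde X)$. The $B_{t_{n,j}}$ are independent of $(X_{t_n},\tilde X_{t_{n,j}})$ and $\mathbb{E}[1-\sum_jB_{t_{n,j}}]=0$. Hence the middle term has zero cross term with $\delta_{t_n}$; equivalently, in your form, $2h\langle\delta_{t_n},\Delta b_{t_n}\rangle$ cancels exactly against the mean of the correction. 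What remains is $\tfrac{2h}{k}\sum_j\mathbb{E}\langle\delta_{t_n},\Delta b_{t_{n,j}}(\tilde X)\rangle$.

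Now substitute $\delta_{t_n}=\tilde\delta_j-\tfrac{jh}{k}\Delta b_{t_n}$, with $\tilde\delta_j:=\tilde X^N_{t_{n,j}}-\tilde X^\infty_{t_{n,j}}$. The one-sided bound then applies to the intermediate pair at the same time $t_{n,j}$. It gives $-(2C_{T-t_{n,j}}+1)\|\tilde\delta_j\|^2$ plus an $O(h^2)\|\delta_{t_n}\|^2$ remainder, and you finish with $\|\tilde\delta_j\|^2\ge\tfrac12\|\delta_{t_n}\|^2-(jh/k)^2\|\Delta b_{t_n}\|^2$. This is the paper's route, and it needs neither time regularity nor second-order spatial regularity.

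The second problem is where you look for the factor $\mathrm{e}^{-T}$. It need not come from the reverse dynamics. With the optimal initial coupling, $\mathbb{E}\|\delta_0\|^2=\mathcal{W}_2^2(\mathcal{L}(\overrightarrow{X}_T),\gamma^d)\le\mathrm{e}^{-2T}\mathcal{W}_2^2(\mu,\gamma^d)$ by contractivity of the forward Ornstein--Uhlenbeck process. Your proposal never states this link between $\mathcal{W}_2(\mathcal{L}(\overrightarrow{X}_T),\gamma^d)$ and $\mathcal{W}_2(\mu,\gamma^d)$. Given it, you only need two things:
\begin{itemize}
\item $\Gamma_{t_n}\le1$ on the contractive part, which is exactly what $h<1/C(L,\alpha)$ with threshold $\rho=C(L,\alpha)h$ guarantees;
\item $\Gamma_{t_n}\le 1+\tilde C(L)h$ on the final window of length $\eta(\alpha,M,C(L,\alpha)h)$, which produces $\mathrm{e}^{C\eta}$.
\end{itemize}
Your alternative of keeping the sharp rate $2(2C_{T-t}+1)$ to extract $\mathrm{e}^{-2T}$ from the product also fails as bookkept. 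The accumulated perturbation $\sum_n C(L,\alpha)h^2=C(L,\alpha)hT$ is only bounded by $T$ under $h<1/C(L,\alpha)$. So $\mathrm{e}^{C(L,\alpha)hT}$ cannot be absorbed into a $T$-independent constant.
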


\begin{proof}
First, according to \eqref{eq:poisson-aux}, we have, for any $n \in [N-1]_0, \, i\in[k-1]_0$,
\begin{equation}
\label{eq:poisson-aux-em}
\begin{aligned}
{X}_{t_{n,i+1}}^N
& =
{X}_{t_n}^N
+
\tfrac{(i+1)h}{k}
b_{t_n}({X}_{t_{n}}^N)
+
h 
\sum_{j=0}^i
B_{t_{n,j}}
\Big(
b_{t_{n,j}}
(\tilde{X}_{t_{n,j}}^N)
-
b_{t_n}({X}_{t_{n}}^N)
\Big)
+
\sqrt{\tfrac{2h}{k}}
\sum_{j=0}^i
Z_{t_{n,j}}\\
& =
{X}_{t_n}^N
+
\Big(
\tfrac{(i+1)h}{k}
-
h
\sum_{j=0}^i
B_{t_{n,j}}
\Big)
b_{t_n}({X}_{t_{n}}^N)
+
h 
\sum_{j=0}^i
B_{t_{n,j}}
b_{t_{n,j}}
(\tilde{X}_{t_{n,j}}^N)
+
\sqrt{\tfrac{2h}{k}}
\sum_{j=0}^i
Z_{t_{n,j}},
\end{aligned}
\end{equation}
and 
\begin{equation}
\label{eq:poisson-x_0_pi-em}
\begin{aligned}
{X}_{t_{n,i+1}}^\infty
& =
{X}_{t_n}^\infty
+
\tfrac{(i+1)h}{k}
b_{t_n}({X}_{t_{n}}^\infty)
+
h 
\sum_{j=0}^i
B_{t_{n,j}}
\Big(
b_{t_{n,j}}
(\tilde{X}_{t_{n,j}}^\infty)
-
b_{t_n}({X}_{t_{n}}^\infty)
\Big)
+
\sqrt{\tfrac{2h}{k}}
\sum_{j=0}^i
Z_{t_{n,j}}\\
& =
{X}_{t_n}^\infty
+
\Big(
\tfrac{(i+1)h}{k}
-
h
\sum_{j=0}^i
B_{t_{n,j}}
\Big)
b_{t_n}({X}_{t_{n}}^\infty)
+
h 
\sum_{j=0}^i
B_{t_{n,j}}
b_{t_{n,j}}
(\tilde{X}_{t_{n,j}}^\infty)
+
\sqrt{\tfrac{2h}{k}}
\sum_{j=0}^i
Z_{t_{n,j}}.
\end{aligned}
\end{equation}
{\color{black}
Taking $i=k-1$ in \eqref{eq:poisson-aux-em} and \eqref{eq:poisson-x_0_pi-em} gives, for any $n\in [N-1]_0$,}
\begin{equation}
\label{eq:x_n-x_infty}
\begin{aligned}
{X}^N_{t_{n+1}}
-
X^\infty_{t_{n+1}}
& =  
{X}^N_{t_n}
-
X^\infty_{t_n}
+ 
h
\Big(
1
-
\sum_{j=0}^{k-1}
B_{t_{n,j}}
\Big)
\Big(
b_{t_n}
({X}^N_{t_n})
-
b_{t_n}
(X^\infty_{t_n})
\Big)\\
& \quad +
h
\sum_{j=0}^{k-1}
B_{t_{n,j}}
\Big(
b_{t_{n,j}}
(\tilde{X}_{t_{n,j}}^N)
-
b_{t_{n,j}}
(\tilde{X}_{t_{n,j}}^\infty)
\Big).
\end{aligned}
\end{equation}
To simplify the notation, we denote, for any $n \in [N]_0, 
\, j \in [k-1]_0,$
\[
e_{t_n}
:=
{X}^N_{t_n}
-
X^\infty_{t_n},
\quad 
\mathcal{M}_{t_{n,j}}
:=
B_{t_{n,j}}
\Big(
b_{t_{n,j}}
(\tilde{X}^N_{t_{n,j}})
-
b_{t_{n,j}}
(\tilde{X}^\infty_{t_{n,j}})
\Big). 
\]
Squaring both sides of the above equation, taking expectations and using the Cauchy-Schwarz inequality show,
\begin{equation}
\begin{aligned}
\E
\big[
\|
e_{t_{n+1}}
\|^2
\big]
& = 
\E
\big[
\|
e_{t_n}
\|^2
\big]
+
h^2
\mathbb{\,E}
\Big[
\big(
1
-
\sum_{j=0}^{k-1}
B_{t_{n,j}}
\big)^2
\Big]
\mathbb{\,E}
\Big[
\big\|
b_{t_{n}}
({X}^N_{t_{n}})
-
b_{t_{n}}
(X^\infty_{t_{n}})
\big\|^2
\Big]\\
& \quad +
h^2
\,
\mathbb{E}
\Big[
\big\|
\sum_{j=0}^{k-1}
\mathcal{M}_{t_{n,j}}
\big\|^2
\Big]  
+
2h
\mathbb{\,E} 
\Big[
\big \langle
e_{t_n},
\sum_{j=0}^{k-1}
\mathcal{M}_{t_{n,j}}
\big \rangle
\Big]\\
& \quad +
2h
\mathbb{\,E} 
\bigg[
\Big  \langle
e_{t_n},
\Big(
1
-
\sum_{j=0}^{k-1}
B_{t_{n,j}}
\Big)
\Big(
b_{t_{n}}
({X}^N_{t_{n}})
-
b_{t_{n}}
(X^\infty_{t_{n}})
\Big)
\Big \rangle
\bigg]\\
& \quad
+
2h^2
\mathbb{\,E} 
\bigg[
\Big \langle
\Big(
1
-
\sum_{j=0}^{k-1}
B_{t_{n,j}}
\Big)
\Big(
b_{t_{n}}
({X}^N_{t_{n}})
-
b_{t_{n}}
(X^\infty_{t_{n}})
\Big),
\sum_{j=0}^{k-1}
\mathcal{M}_{t_{n,j}}
\Big\rangle
\bigg]\\
& \leq
\E
\big[
\|
e_{t_n}
\|^2
\big]
+
2h^2
\mathbb{\,E}
\Big[
\big(
1
-
\sum_{j=0}^{k-1}
B_{t_{n,j}}
\big)^2
\Big]
\mathbb{\,E}
\Big[
\big\|
b_{t_{n}}
({X}^N_{t_{n}})
-
b_{t_{n}}
(X^\infty_{t_{n}})
\big\|^2
\Big]\\
& \quad +
2h^2
\,
\mathbb{E}
\Big[
\big\|
\sum_{j=0}^{k-1}
\mathcal{M}_{t_{n,j}}
\big\|^2
\Big]  
+
2h
\mathbb{\,E} 
\Big[
\big \langle
e_{t_n},
\sum_{j=0}^{k-1}
\mathcal{M}_{t_{n,j}}
\big \rangle
\Big]\\
& \quad +
2h
\mathbb{\,E} 
\bigg[
\Big  \langle
e_{t_n},
\Big(
1
-
\sum_{j=0}^{k-1}
B_{t_{n,j}}
\Big)
\Big(
b_{t_{n}}
({X}^N_{t_{n}})
-
b_{t_{n}}
(X^\infty_{t_{n}})
\Big)
\Big \rangle
\bigg].
\end{aligned}
\end{equation}
Noting that $\E[\sum_{j=0}^{k-1}B_{t_{n,j}}]=1$ and 
$\E
\big[
\big(\sum_{j=0}^{k-1}
B_{t_{n,j}}
\big)^2
\big] 
\leq
2$, one directly derives that
\begin{equation}
\begin{aligned}
\mathbb{\,E}
\bigg[
\Big(
1
-
\sum_{j=0}^{k-1}
B_{t_{n,j}}
\Big)^2
\bigg]
& =
1-
2\,
\E\Big[\sum_{j=0}^{k-1}
B_{t_{n,j}}\Big]
+
\E
\bigg[
\Big(\sum_{j=0}^{k-1}
B_{t_{n,j}}
\Big)^2
\bigg]
\leq 1,
\end{aligned}
\end{equation}
and 
\begin{equation}
\begin{aligned}
&
2h
\mathbb{\,E} 
\bigg[
\Big  \langle
e_{t_n},
\Big(
1
-
\sum_{j=0}^{k-1}
B_{t_{n,j}}
\Big)
\big(
b_{t_{n}}
({X}^N_{t_{n}})
-
b_{t_{n}}
(X^\infty_{t_{n}})
\big)
\Big \rangle
\bigg]\\
& =
2h
\mathbb{\,E} 
\bigg[
\Big  \langle
e_{t_n},
\big(
b_{t_{n}}
({X}^N_{t_{n}})
-
b_{t_{n}}
(X^\infty_{t_{n}})
\big)
\mathbb{\,E} 
\Big[
\big(
1
-
\sum_{j=0}^{k-1}
B_{t_{n,j}}
\big)
\big|
\sigma
\big({X}^\infty_{t_n},X^N_{t_n}\big)
\Big]
\Big \rangle
\bigg]\\
& =
0.
\end{aligned}
\end{equation}
It remains to estimate 
$2h
\,
\mathbb{\,E} 
\Big[
\big  \langle
e_{t_n},
\sum_{j=0}^{k-1}
\mathcal{M}_{t_{n,j}}
\big \rangle
\Big]$, which can be divided into two cases depending on different ranges
of $n$.

\textbf{Case I:} $(n+1)k \leq N_h$. 
In the case $(n+1)k \leq N_h$, 
we do the following decomposition:
\begin{equation}
\begin{aligned}
&
2h
\,
\mathbb{\,E} 
\Big[
\big  \langle
e_{t_n},
\sum_{j=0}^{k-1}
\mathcal{M}_{t_{n,j}}
\big \rangle
\Big]\\
& =
\underbrace{
2h
\sum_{j=0}^{k-1}
\mathbb{\,E} 
\Big[
\big  \langle
\tilde{X}^N_{t_{n,j}}
-
\tilde{X}^\infty_{t_{n,j}},
B_{t_{n,j}}
\big(
b_{t_{n,j}}
(\tilde{X}^N_{t_{n,j}})
-
b_{t_{n,j}}
(\tilde{X}^\infty_{t_{n,j}})
\big)
\big \rangle
\Big]}_{=:\mathbb{T}_1}\\
& \quad +
\underbrace{
2h
\sum_{j=0}^{k-1}
\mathbb{\,E} 
\Big[
\Big  \langle
{X}^N_{t_n}
-
X^\infty_{t_n}
-
\tilde{X}^N_{t_{n,j}}
+
\tilde{X}^\infty_{t_{n,j}},
B_{t_{n,j}}
\big(
b_{t_{n,j}}
(\tilde{X}^N_{t_{n,j}})
-
b_{t_{n,j}}
(\tilde{X}^\infty_{t_{n,j}})
\big)
\Big \rangle
\Big]}_{=:\mathbb{T}_2}.
\end{aligned}
\end{equation}
The definitions in \eqref{eq:poisson-aux} give, for any $n\in [N]_0$ and $j\in [k-1]_0$, 
\begin{equation}
\label{eq:hat-x-tilde-x}
\tilde{X}^N_{t_{n,j}}
-
\tilde{X}^\infty_{t_{n,j}}
=
{X}^N_{t_{n}}
-
{X}^\infty_{t_{n}}
+
\tfrac{jh}{k}
\Big(
b_{t_n}
({X}^N_{t_{n}})
-
b_{t_n}
({X}^\infty_{t_{n}})
\Big).
\end{equation}
Regarding $\mathbb{T}_1$, 
{\color{black}
using
the identity $\E[B_{t_{n,j}}]=1/k$, the inequality $(a+b)^2\geq \tfrac12 a^2-b^2$, and Proposition~\ref{prop:regime-shift-drift} implies
}
\begin{equation}
\label{eq:T1-est}
\begin{aligned}
\mathbb{T}_1 
& \leq
-\tfrac{2h}{k} 
\sum_{j=0}^{k-1}(2C_{T-{t_{n,j}}}+1)
\mathbb{\,E} 
\Big[
\big\|
\tilde{X}^N_{t_{n,j}}
-
\tilde{X}^\infty_{t_{n,j}}
\big\|^2
\Big]\\
& \leq 
-\tfrac{2h}{k} 
\min_{0\leq j\leq k-1} 
(2C_{T-{t_{n,j}}}+1)
\sum_{j=0}^{k-1}
\Big(
\tfrac{1}{2}
\,
\mathbb{E}
\big[
\|e_{t_n}\|^2
\big]
-
(\tfrac{jh}{k})^2
\mathbb{\,E}
\Big[
\big\|
b_{t_n}
({X}^N_{t_{n}})
-
b_{t_n}
({X}^\infty_{t_{n}})
\big\|^2
\Big]
\Big)\\
& \leq
-h
\min_{0\leq j\leq k-1} (2C_{T-{t_{n,j}}}+1)
\,
\mathbb{E}
\big[
\|e_{t_n}\|^2
\big]
+
2h^3 
(2L+1)^2
(1 \vee 2\alpha -1)
\,
\mathbb{E}
\big[
\|e_{t_n}\|^2
\big],
\end{aligned}
\end{equation}
where the last inequality follows from the fact that, for any $t\in [0,T]$,
\begin{align}
2C_t+1
& =
\frac{2\alpha}
{\alpha + (1 - \alpha)e^{-2t}} - 
\frac{2e^{-2t}M}
{(\alpha + (1 - \alpha)e^{-2t})^2} - 1
\nonumber \\
& \leq
\frac{2\alpha}
{\alpha + (1 - \alpha)e^{-2t}}
-
1 \nonumber\\
& \leq 
1 \vee (2\alpha -1).
\end{align}
For the estimate of $\mathbb{T}_2$, using \eqref{eq:hat-x-tilde-x}, the Young inequality and Lipschitz condition of $b_t(\cdot)$ leads to 
\begin{equation}
\label{eq:T2-est}
\begin{aligned}
\mathbb{T}_2 
& =
2h
\sum_{j=0}^{k-1}
\mathbb{\,E} 
\Big[
\big  \langle
\tfrac{jh}{k}
\big(
b_{t_n}
({X}^\infty_{t_{n}})
-
b_{t_n}
({X}^N_{t_{n}})
\big),
B_{t_{n,j}}
\big(
b_{t_{n,j}}
(\tilde{X}^N_{t_{n,j}})
-
b_{t_{n,j}}
(\tilde{X}^\infty_{t_{n,j}})
\big)
\big \rangle
\Big]\\
& =
2h
\mathbb{\,E} 
\bigg[
\Big  \langle
\big(
b_{t_n}
({X}^\infty_{t_n})
-
b_{t_n}
({X}^N_{t_n})
\big),
\sum_{j=0}^{k-1}
\tfrac{jh}{k}
B_{t_{n,j}}
\big(
b_{t_{n,j}}
(\tilde{X}^N_{t_{n,j}})
-
b_{t_{n,j}}
(\tilde{X}^\infty_{t_{n,j}})
\big)
\Big \rangle
\bigg]\\
& \leq
h^2 
(2L+1)^2
\,
\mathbb{E}
\big[
\|
e_{t_n}\|^2
\big]
+
h^2
\,
\mathbb{E}
\Big[
\big\|
\sum_{j=0}^{k-1}
{\color{black}\tfrac{j}{k}}
B_{t_{n,j}}
\big(
b_{t_{n,j}}
(\tilde{X}^N_{t_{n,j}})
-
b_{t_{n,j}}
(\tilde{X}^\infty_{t_{n,j}})
\big)
\big\|^2
\Big]\\
& =
h^2 
(2L+1)^2
\,
\mathbb{E}
\big[
\|
e_{t_n}\|^2
\big]
+
h^2
\,
\mathbb{E}
\Big[
\big\|
\sum_{j=0}^{k-1}
{\color{black}\tfrac{j}{k}
\mathcal{M}_{t_{n,j}}}
\big\|^2
\Big].
\end{aligned}
\end{equation}
As a result, 
\begin{equation}
\begin{aligned}
2h
\,
\mathbb{\,E} 
\Big[
\big  \langle
e_{t_n},
\sum_{j=0}^{k-1}
\mathcal{M}_{t_{n,j}}
\big \rangle
\Big]
& \leq
-h
\min_{0\leq j\leq k-1} (2C_{T-{t_{n,j}}}+1)
\mathbb{\,E}
\big[
\|e_{t_n}\|^2
\big]\\
& \quad +
h^2
(2L+1)^2
\Big(
1+2\big(1 \vee (2\alpha -1)\big)
\Big)
\,
\mathbb{E}
\big[
\|e_{t_n}\|^2
\big]\\
& \quad +
h^2
\,
\mathbb{E}
\Big[
\big\|
\sum_{j=0}^{k-1}
{\color{black}\tfrac{j}{k}\mathcal{M}_{t_{n,j}}}
\big\|^2
\Big].
\end{aligned}
\end{equation}

\textbf{Case II:} $(n+1)k> N_h$.
In the case $(n+1)k> N_h$, we use the Young inequality to obtain
\begin{equation}
\begin{aligned}
2h
\,
\mathbb{\,E} 
\Big[
\big  \langle
e_{t_n},
\sum_{j=0}^{k-1}
\mathcal{M}_{t_{n,j}}
\big \rangle
\Big]
& \leq
h
\,
\mathbb{E}
\big[
\|e_{t_n}\|^2
\big]
+
h
\,
\mathbb{E}
\Big[
\big\|
\sum_{j=0}^{k-1}
\mathcal{M}_{t_{n,j}}
\big\|^2
\Big].
\end{aligned}
\end{equation}
Next, we cope with $\mathbb{E}
\big[
\|\sum_{j=0}^{k-1}
\mathcal{M}_{t_{n,j}}\|^2
\big]$.
{\color{black}
For any $j\in [k-1]_0$ and $n\in [N]_0$, $\E[B_{t_{n,j}}]=\E[B^2_{t_{n,j}}]=1/k$, one obtains
}
\begin{align}
\mathbb{E}
\Big[
\big\|
\sum_{j=0}^{k-1}
\mathcal{M}_{t_{n,j}}
\big\|^2
\Big]
& =
\mathbb{E}
\Big[
\big\|
\sum_{j=0}^{k-1}
B_{t_{n,j}}
\big(
b_{t_{n,j}}
(\tilde{X}^N_{t_{n,j}})
-
b_{t_{n,j}}
(\tilde{X}^\infty_{t_{n,j}})
\big)
\big\|^2
\Big]\nonumber\\
& =
\underbrace{
\tfrac{2}{k^2}
\sum_{0\leq i<j\leq k-1}
\mathbb{E}
\Big[
\big\langle 
b_{t_{n,j}}
(\tilde{X}^N_{t_{n,j}})
-
b_{t_{n,j}}
(\tilde{X}^\infty_{t_{n,j}}),
b_{t_{n,j}}
(\tilde{X}^N_{t_{n,j}})
-
b_{t_{n,j}}
(\tilde{X}^\infty_{t_{n,j}})
\big\rangle
\Big]}_{=:\mathbb{T}_3}\nonumber\\
& \quad +
\underbrace{
\tfrac{1}{k}
\sum_{j=0}^{k-1}
\mathbb{E}
\Big[
\big\|
b_{t_{n,j}}
(\tilde{X}^N_{t_{n,j}})
-
b_{t_{n,j}}
(\tilde{X}^\infty_{t_{n,j}})
\big\|^2
\Big]}_{=:\mathbb{T}_4}.
\end{align}
In the following we deal with the above two items separately. 
Using the Cauchy-Schwarz inequality and the Lipschitz condition of 
$b_t(\cdot)$, one can derive that
\begin{equation}
\label{eq:T3-est}
\begin{aligned}
\mathbb{T}_3 
& \leq 
\tfrac{2}{k^2}
\sum_{0\leq i<j\leq k-1}
\mathbb{E}
\Big[
\big\|
b_{t_{n,i}}
(\tilde{X}^N_{t_{n,i}})
-
b_{t_{n,i}}
(\tilde{X}^\infty_{t_{n,i}})
\big\|
\cdot
\big\|
b_{t_{n,j}}
(\tilde{X}^N_{t_{n,j}})
-
b_{t_{n,j}}
(\tilde{X}^\infty_{t_{n,j}})
\big\|
\Big]\\
& \leq
\sup_{0\leq i \leq k-1}
\mathbb{E}
\Big[
\big\|
b_{t_{n,i}}
(\tilde{X}^N_{t_{n,i}})
-
b_{t_{n,i}}
(\tilde{X}^\infty_{t_{n,i}})
\big\|^2
\Big]\\
& {\color{black}\leq
(2L+1)^2 
\sup_{0\leq i \leq k-1}
\mathbb{E}
\Big[
\big\|
e_{t_n}
+
\tfrac{ih}{k}
\big(
b_{t_{n}}
({X}^N_{t_{n}})
-
b_{t_{n}}
({X}^\infty_{t_{n}})
\big)
\big\|^2
\Big]}\\
& \leq
2(2L+1)^2 
\big(1+h^2(2L+1)^2\big)
\,
\mathbb{E}
\big[
\|
e_{t_n}\|^2
\big].
\end{aligned}
\end{equation}
Regarding $\mathbb{T}_4$, we use the Lipschitz condition of 
$b_t(\cdot)$ to obtain 
\begin{equation}
\label{eq:T4-est}
\begin{aligned}
\mathbb{T}_4 
\leq 
2(2L+1)^2 
\big(1+h^2(2L+1)^2\big)
\,
\mathbb{E}
\big[
\|
e_{t_n}\|^2
\big].
\end{aligned}
\end{equation}
Equipped with estimates \eqref{eq:T3-est} and \eqref{eq:T4-est}, one  derives that
\begin{align}
\mathbb{E}
\Big[
\big\|
\sum_{j=0}^{k-1}
\mathcal{M}_{t_{n,j}}
\big\|^2
\Big]
\leq
4
\big((2L+1)^2+1 
\big)^2
\,
\mathbb{E}
\big[
\|
e_{t_n}\|^2
\big].
\end{align}
{\color{black}
For the weighted sum, the same expansion and the bounds \(0\leq i/k,j/k\leq1\) give
}
\begin{align}
&{\color{black}\mathbb{E}
\Big[
\big\|
\sum_{j=0}^{k-1}
\tfrac{j}{k}
\mathcal{M}_{t_{n,j}}
\big\|^2
\Big]}\nonumber\\
&{\color{black}\leq
\tfrac{2}{k^2}
\sum_{0\leq i<j\leq k-1}
\tfrac{ij}{k^2} \
\mathbb{E}
\Big[
\big\|
b_{t_{n,i}}
(\tilde{X}^N_{t_{n,i}})
-
b_{t_{n,i}}
(\tilde{X}^\infty_{t_{n,i}})
\big\|
\cdot
\big\|
b_{t_{n,j}}
(\tilde{X}^N_{t_{n,j}})
-
b_{t_{n,j}}
(\tilde{X}^\infty_{t_{n,j}})
\big\|
\Big]}\nonumber\\
&\quad
{\color{black}
+
\tfrac{1}{k}
\sum_{j=0}^{k-1}
\big(\tfrac{j}{k}\big)^2
\mathbb{E}
\Big[
\big\|
b_{t_{n,j}}
(\tilde{X}^N_{t_{n,j}})
-
b_{t_{n,j}}
(\tilde{X}^\infty_{t_{n,j}})
\big\|^2
\Big]}\nonumber\\
&{\color{black}\leq
4
\big((2L+1)^2+1 
\big)^2
\,
\mathbb{E}
\big[
\|
e_{t_n}\|^2
\big].}
\end{align}
Based on the above estimates, we obtain
\begin{equation}
\E
\big[
\|
e_{t_{N}}
\|^2
\big]
\leq
\prod_{\ell=0}^{N-1}
{\color{black}\Gamma_{t_\ell}} 
\E
\big[
\|
e_0
\|^2
\big],
\end{equation}
where 
\begin{equation}
\begin{aligned}
\Gamma_{t_n}
:=
\begin{cases}
1+
\big(C(L,\alpha) h-\min_{0\leq j\leq k-1} (2 C_{T-{t_{n,j}}}+1)
\big)h,
& \text { for } (n+1)k \leq N_h 
\\[5pt]
1+
\tilde{C}(L)h,
& \text { for } (n+1)k > N_h 
\end{cases}
\end{aligned}
\end{equation}
and 
\begin{equation}
\label{eq:C-def}
C(L,\alpha)
:=
\big((2L+1)^2+1
\big)^2
\big(
15+2(1 \vee 2(\alpha-1))
\big), 
\quad
\tilde{C}(L)
:=
14
\big((2L+1)^2+1
\big).
\end{equation}
With regard to 
$\E
\big[
\|
e_0
\|^2
\big]$, it follows from \cite[Proposition C.2]{strasman2025analysis} that,
\begin{equation}
\E
\big[
\|
e_0
\|^2
\big]
=
\mathcal{W}^2_2
\big(
\mathcal{L}(\overrightarrow{X}_T), \gamma^d
\big)
\leq 
\mathrm{e}^{-2T}
\mathcal{W}^2_2
(\mu,
\gamma^d).
\end{equation}
Since $h$ satisfies \eqref{eq:h-condition-initial-error}, we take $\rho_h^{(4)}:=C(L,\alpha)h $. It follows that $\Gamma_{t_n}\leq 1$ whenever $(n+1)k\leq N_h$.
{\color{black}
Using the definition of \(N_h\) with \(\rho=\rho_h^{(4)}\), we also have
\[
\big(N-\lfloor N_h/k\rfloor\big)h
\leq
T-\widehat{T}(\alpha,M,\rho_h^{(4)})+h
=
\eta(\alpha,M,\rho_h^{(4)})+h .
\]
}
Then
\begin{equation}
\label{eq:exp}
\prod_{\ell=0}^{N-1}
\Gamma_{t_\ell}
\leq
\prod_{\ell=N_h}^{N-1}
\Gamma_{t_\ell}
\leq
\big(
1+\tilde{C}(L)  h
\big)^{N-N_h}
\leq
\mathrm{e}^{\tilde{C}(L) \eta(\alpha, M,\rho_h^{(4)})}.
\end{equation}
All together, we have
\begin{equation}
\mathcal{W}_2
\big(
\mathcal{L}(X^N_{t_N}),
\mathcal{L}(X^{\infty}_{t_N})
\big)
\leq 
C
\mathrm{e}^{\tilde{C}(L) 
\eta(\alpha, M,\rho_h^{(4)})}
\mathrm{e}^{-T}
\mathcal{W}_2(\mathrm{\mu},\gamma^d).
\end{equation}
The proof of Proposition \ref{prop:pi-X_T} is completed.
\end{proof}

\subsection{Initialization error of EM method}
The aim of this subsection is to prove the initialization error of EM method, by utilizing the same technique in Proposition \ref{prop:pi-X_T}.

\begin{prop}
[Initialization error of EM method]
\label{prop:em-pi-X_T}
Suppose that Assumptions \ref{ass:weak-convex} and \ref{ass:one-side-lipschitz} hold. For any $n \in [N]_0$ and 
let $\{X^N_{t_{n}}\}$ and $\{X^\infty_{t_{n}}\}$ be produced by \eqref{eq:em-aux} using the uniform step-size 
\begin{equation}
h \leq
\tfrac{2}{(2L+1)^2}.
\end{equation}
Then there exists a constant $C$, independent of $d$ and $h$, such that,
\begin{equation}
\mathcal{W}_2
\big(
\mathcal{L}(X^N_{t_N}),
\mathcal{L}(X^{\infty}_{t_N})
\big)
\leq 
\mathrm{e}^{C
\eta(\alpha, M,(2L+1)^2h/2)}
\mathrm{e}^{-T}
\mathcal{W}_2(\mathrm{\mu},\gamma^d),
\end{equation}
with $\eta(\alpha, M,(2L+1)^2h/2)$ as defined in \eqref{eq:eta-def}, where $L$ comes from Lemma \ref{lem:score-lipschitz}.
\end{prop}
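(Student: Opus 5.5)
We argue by synchronous coupling. Since $X^N$ and $X^\infty$ in \eqref{eq:em-aux} are driven by the same increments $\Delta W_{t_n}$, we take $(X^N_0,X^\infty_0)$ to be a $\mathcal{W}_2$-optimal coupling of $\mathcal{L}(\overrightarrow{X}_T)$ and $\gamma^d$. Setting $e_{t_n}:=X^N_{t_n}-X^\infty_{t_n}$, the noise cancels and we obtain the deterministic-type recursion
\[
e_{t_{n+1}}=e_{t_n}+h\big(b_{t_n}(X^N_{t_n})-b_{t_n}(X^\infty_{t_n})\big).
\]
Squaring and taking expectations gives
\[
\E\|e_{t_{n+1}}\|^2=\E\|e_{t_n}\|^2+2h\,\E\langle e_{t_n},b_{t_n}(X^N_{t_n})-b_{t_n}(X^\infty_{t_n})\rangle+h^2\,\E\|b_{t_n}(X^N_{t_n})-b_{t_n}(X^\infty_{t_n})\|^2 .
\]
This is the $k=1$ specialization of the argument in Proposition \ref{prop:pi-X_T}. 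It is simpler, because there are no Bernoulli terms and no intermediate points $\tilde X$.

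Next we bound the two terms using earlier results. The quadratic term is controlled by the Lipschitz bound $\|b_t(x)-b_t(y)\|\le (2L+1)\|x-y\|$, which follows from Lemma \ref{lem:score-lipschitz}. The cross term is controlled by Proposition \ref{prop:regime-shift-drift} with the choice $\rho=\rho_h:=(2L+1)^2h/2$. This yields $\E\|e_{t_{n+1}}\|^2\le\Gamma_{t_n}\E\|e_{t_n}\|^2$, where
\[
\Gamma_{t_n}=\begin{cases}1+\big((2L+1)^2h-2(2C_{T-t_n}+1)\big)h,& n\le N_h,\\ 1+\big(2(2L+1)+(2L+1)^2h\big)h,& n>N_h.\end{cases}
\]

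The main point is to show $\Gamma_{t_n}\le 1$ in the contractive regime. For $n\le N_h$ we have $t_n\le \widehat T(\alpha,M,\rho_h)$, so $2C_{T-t_n}+1\ge\rho_h=(2L+1)^2h/2$. Hence $(2L+1)^2h-2(2C_{T-t_n}+1)\le 0$ and $\Gamma_{t_n}\le1$. The step-size restriction $h\le 2/(2L+1)^2$ ensures $\rho_h\in[0,1)$, so $\eta(\alpha,M,\rho_h)$ in \eqref{eq:eta-def} is well defined. In the non-contractive regime we use $h\le 2/(2L+1)^2$ to get $\Gamma_{t_n}\le 1+Ch$ with $C=2(2L+1)+2$.

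Now we iterate. The number of steps with $n>N_h$ is at most $(T-\widehat T(\alpha,M,\rho_h))/h+1=\eta(\alpha,M,\rho_h)/h+1$. Therefore
\[
\prod_{\ell=0}^{N-1}\Gamma_{t_\ell}\le(1+Ch)^{\eta/h+1}\le \mathrm{e}^{C\eta(\alpha,M,\rho_h)+Ch},
\]
and the harmless factor $\mathrm{e}^{Ch}$ is absorbed by enlarging $C$. This is the same computation as in \eqref{eq:exp}.

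Finally we combine the pieces. The initial error satisfies $\E\|e_0\|^2=\mathcal{W}_2^2(\mathcal{L}(\overrightarrow X_T),\gamma^d)\le \mathrm{e}^{-2T}\mathcal{W}_2^2(\mu,\gamma^d)$, by contractivity of the OU semigroup (as in \cite[Proposition C.2]{strasman2025analysis}). Since $\mathcal{W}_2(\mathcal{L}(X^N_{t_N}),\mathcal{L}(X^\infty_{t_N}))\le (\E\|e_{t_N}\|^2)^{1/2}$, taking square roots gives the claimed bound.

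The only delicate step is matching the threshold $\rho$ to the $h^2$ term so that $\Gamma_{t_n}\le 1$ exactly on $n\le N_h$. Everything else is routine.
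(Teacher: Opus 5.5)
Your proposal is correct and follows the paper's own route essentially step for step: synchronous coupling, the $k=1$ specialization of Proposition~\ref{prop:pi-X_T} with $\rho_h=(2L+1)^2h/2$ giving the same factors $\Gamma_{t_n}$ (at most $1$ for $n\le N_h$, and $1+\mathcal{O}(h)$ afterwards), and the OU bound $\mathbb{E}\|e_0\|^2\le \mathrm{e}^{-2T}\mathcal{W}_2^2(\mu,\gamma^d)$. Two cosmetic points: $h\le 2/(2L+1)^2$ only gives $\rho_h\le 1$, and at the endpoint \eqref{eq:eta-def} degenerates, a boundary issue shared with the statement itself; and since $N_h=\lfloor \widehat T/h\rfloor>\widehat T/h-1$, there are strictly fewer than $\eta/h$ non-contractive steps, so $\prod_{\ell}\Gamma_{t_\ell}\le \mathrm{e}^{C\eta}$ holds directly, avoiding a stray factor $\mathrm{e}^{Ch}$ that could not be absorbed into $\mathrm{e}^{C\eta}$ when $\eta=0$.
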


\begin{proof} 
By following the proof of Proposition \ref{prop:pi-X_T}, using the same notation and arguments as in the estimate of \eqref{eq:exp}, and taking $\rho^{(5)}_h:= (2L+1)^2h/2$, one will arrive at
\begin{equation}
\begin{aligned}
\E
\big[
\|
e_{t_{N }}
\|^2
\big]
& \leq
\prod_{\ell=0}^{N-1}
\Gamma_{t_\ell} 
\E
\big[
\|
e_{0}
\|^2
\big]\\
& \leq 
\mathrm{e}^{(2L+3)^2
\eta(\alpha, M,\rho^{(5)}_h)}
\mathrm{e}^{-2T}
\mathcal{W}^2_2(\mathrm{\mu},\gamma^d).
\end{aligned}
\end{equation}
where 
\begin{equation}
\begin{aligned}
\Gamma_{t_n}
:=
\begin{cases}
1+
\big((2L+1)^2 h-2(2 C_{T-{t_{n}}}+1)
\big)h,
& \text { for } n \leq N_h 
\\[5pt]
1+
(2L+1)(2L+3)h,
& \text { for } n > N_h.
\end{cases}
\end{aligned}
\end{equation}
The proof is completed.
\end{proof}

\section{Bound on score matching error }

\subsection{Score matching error of PM method}
\begin{prop}
[Score matching error of PM method]
\label{prop:score-error}
Suppose that Assumptions \ref{ass:weak-convex}, \ref{ass:one-side-lipschitz} and \ref{ass:score-function-approximation} hold. 
For any $n \in [N]_0, \,j\in[k-1]_0$ and 
let $\{X^\infty_{t_{n,j}}\}$ and $\{Y_{t_{n,j}}\}$ be produced by \eqref{eq:poisson-aux} and \eqref{eq:poisson-method} using the uniform step-size 
\[
h \leq 
\tfrac{1}{C(L,\alpha)},
\]
where $C(L,\alpha)$ comes from \eqref{eq:C-def}.
Then there exists a constant $C$, independent of $d$ and $h$, such that,
\begin{equation}
\mathcal{W}_2
\big(
\mathcal{L}(X^{\infty}_{t_N}),
\mathcal{L}(Y_{t_N})
\big)
\leq 
C
\mathrm{e}^{{C}
\eta(\alpha, M,C(L,\alpha)h)}
T
\epsilon_0,
\end{equation}
with $\eta(\alpha, M,C(L,\alpha)h)$ as defined in \eqref{eq:eta-def}, where $L$ comes from Lemma \ref{lem:score-lipschitz}.
\end{prop}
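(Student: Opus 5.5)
We couple $X^\infty$ and $Y$ synchronously, using the same $Z_{t_{n,j}}$, the same $B_{t_{n,j}}$ and the same initial draw $X^\infty_0=Y_0\sim\gamma^d$. Write $\delta_{t_n}:=X^\infty_{t_n}-Y_{t_n}$. The two recursions differ only in the drift: $X^\infty$ uses $b_t(x)=-x+2\nabla\log\tilde p_{T-t}(x)$, while $Y$ uses $\hat b_t(x):=-x+2\tilde s_{\theta^\star}(T-t,x)$. We therefore split $\hat b_t(Y)=b_t(Y)-2e_t(Y)$ with the score error $e_t(y):=\nabla\log\tilde p_{T-t}(y)-\tilde s_{\theta^\star}(T-t,y)$. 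Subtracting the block form \eqref{eq:poisson-aux-em} for $Y$ from the one for $X^\infty$ gives
\[
\delta_{t_{n+1}}=\delta_{t_n}+h\Big(1-\sum_j B_{t_{n,j}}\Big)\big(b_{t_n}(X^\infty_{t_n})-b_{t_n}(Y_{t_n})\big)+h\sum_j B_{t_{n,j}}\big(b_{t_{n,j}}(\tilde X^\infty_{t_{n,j}})-b_{t_{n,j}}(\tilde Y_{t_{n,j}})\big)+\mathcal{S}_{t_n},
\]
where $\mathcal{S}_{t_n}:=2h(1-\sum_jB_{t_{n,j}})e_{t_n}(Y_{t_n})+2h\sum_jB_{t_{n,j}}e_{t_{n,j}}(\tilde Y_{t_{n,j}})$. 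Note that $\tilde X^\infty_{t_{n,j}}-\tilde Y_{t_{n,j}}=\delta_{t_n}+\tfrac{jh}{k}\big(b_{t_n}(X^\infty_{t_n})-\hat b_{t_n}(Y_{t_n})\big)$. The part without $\mathcal S$ is exactly the structure treated in Proposition~\ref{prop:pi-X_T}, up to the additional $\tfrac{jh}{k}\cdot 2e_{t_n}(Y_{t_n})$ shift inside the inner points, which is an $O(h\epsilon_0)$ perturbation.

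The plan is to work in $L^2$ norm rather than squared norm, so that the score error accumulates linearly. Repeating the Case I/Case II estimates of Proposition~\ref{prop:pi-X_T} for the homogeneous part $\mathcal{H}_{t_n}$ gives $\|\mathcal H_{t_n}\|_{L^2}\le\sqrt{\Gamma_{t_n}}\,\|\delta_{t_n}\|_{L^2}$ plus an $O(h^2\epsilon_0)$ term coming from the inner-point shift. Here $\Gamma_{t_n}\le1$ when $(n+1)k\le N_h$ under $h\le 1/C(L,\alpha)$, and $\Gamma_{t_n}\le1+\tilde C(L)h$ otherwise. By Minkowski, $\|\delta_{t_{n+1}}\|_{L^2}\le\sqrt{\Gamma_{t_n}}\|\delta_{t_n}\|_{L^2}+\|\mathcal S_{t_n}\|_{L^2}+Ch^2\epsilon_0$.

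For $\|\mathcal S_{t_n}\|_{L^2}$ we would like to use independence of $B_{t_{n,j}}$ from the argument points, giving $\mathbb E[B_{t_{n,j}}\|e(\tilde Y_{t_{n,j}})\|^2]=\tfrac1k\mathbb E\|e(\tilde Y_{t_{n,j}})\|^2$. This is the main obstacle. Assumption~\ref{ass:score-function-approximation} controls the error at the grid iterates $Y_{t_{n,i}}$, not at the midpoint evaluations $\tilde Y_{t_{n,j}}$. We would bridge the gap via spatial Lipschitzness, assuming it for $\tilde s_{\theta^\star}$ as in the remark after Assumption~\ref{ass:score-function-approximation}, together with Lemma~\ref{lem:score-lipschitz}. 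Writing $\tilde Y_{t_{n,j}}-Y_{t_{n,j}}=-h\sum_{l<j}B_{t_{n,l}}(\cdots)$, the difference is of higher order in $h$ and can be absorbed into the constant. If that route fails, the fallback is to read Assumption~\ref{ass:score-function-approximation} as covering all evaluation points of the scheme, which gives the bound directly. Either way, using $\mathbb E(1-\sum B)^2\le1$ and $\mathbb E(\sum_j B_{t_{n,j}})^2\le2$ together with Cauchy–Schwarz, we obtain $\|\mathcal S_{t_n}\|_{L^2}\le Ch\epsilon_0$.

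Unrolling from $\delta_0=0$ gives $\|\delta_{t_N}\|_{L^2}\le Ch\epsilon_0\sum_{m}\prod_{\ell\ge m}\sqrt{\Gamma_{t_\ell}}$. The sum–product is bounded as in \eqref{eq:recusion} and \eqref{eq:exp}, splitting at $N_h$ with $\rho=C(L,\alpha)h$: the contractive part contributes at most $N$ terms each bounded by the product over the non-contractive tail, and that tail product is at most $e^{\tilde C(L)\eta(\alpha,M,C(L,\alpha)h)}$. Hence the total is at most $\big(T/h+C\big)e^{C\eta}$. This yields $\mathcal W_2(\mathcal L(X^\infty_{t_N}),\mathcal L(Y_{t_N}))\le\|\delta_{t_N}\|_{L^2}\le Ce^{C\eta(\alpha,M,C(L,\alpha)h)}T\epsilon_0$.
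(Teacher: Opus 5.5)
Your overall argument is the paper's proof. Its ingredients are the synchronous coupling from $X^\infty_0=Y_0$, a block recursion estimated in $L^2(\Omega;\mathbb{R}^d)$ by the triangle inequality, the contraction factors of Proposition~\ref{prop:pi-X_T} for the homogeneous part, an $O(h\epsilon_0)$ forcing per block, and the sum--product bound split at $N_h$. The paper handles the inner-point shift by introducing $\widehat Y_{t_{n,i}}:=Y_{t_n}+\tfrac{ih}{k}b_{t_n}(Y_{t_n})+\sqrt{2h/k}\sum_{j<i}Z_{t_{n,j}}$. Then $\tilde X^\infty_{t_{n,j}}-\widehat Y_{t_{n,j}}$ has exactly the structure of \eqref{eq:hat-x-tilde-x}, and $b_{t_{n,j}}(\tilde Y_{t_{n,j}})-b_{t_{n,j}}(\widehat Y_{t_{n,j}})$ goes into the forcing term $A_3$. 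This is equivalent to your perturbative treatment. Both it and the term $2h(1-\sum_jB_{t_{n,j}})e_{t_n}(Y_{t_n})$ use the score error only at the grid point $Y_{t_n}$, which Assumption~\ref{ass:score-function-approximation} covers.

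The step that fails is your primary bridge for the midpoint score error. You are right that Assumption~\ref{ass:score-function-approximation} literally controls the error only at the iterates $Y_{t_{n,i}}$, not at $\tilde Y_{t_{n,j}}$. The paper's bound on $A_3$ simply applies the $\epsilon_0$-bound at $\tilde Y_{t_{n,j}}$, i.e.\ it uses your fallback reading without saying so. The Lipschitz route, however, cannot give the stated estimate. We have $\tilde Y_{t_{n,j}}-Y_{t_{n,j}}=-h\sum_{l<j}B_{t_{n,l}}\big(\hat b_{t_{n,l}}(\tilde Y_{t_{n,l}})-\hat b_{t_n}(Y_{t_n})\big)$, and the drift increment contains the Gaussian part $\sqrt{2h/k}\sum_{l'<l}Z_{t_{n,l'}}$ of $\tilde Y_{t_{n,l}}-Y_{t_n}$. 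So the quantity you would have to absorb is of order $\sqrt{d}\,h^{3/2}$ in $L^2$ (compare \eqref{eq:x-error}) and does not scale with $\epsilon_0$. With the prefactor $h$ and $T/h$ blocks, it contributes an additive term of order $T\sqrt{d}\,h^{3/2}$. No constant independent of $d$ and $h$ can fold this into $C\,T\epsilon_0$. Being ``higher order in $h$'' does not help, because the target bound has no $h$-dependent additive term.

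Controlling $\hat b_{t_{n,l}}(\tilde Y_{t_{n,l}})-\hat b_{t_n}(Y_{t_n})$ would also need three things the proposition does not provide: a spatial Lipschitz constant for $\tilde s_{\theta^\star}$; time regularity of $\tilde s_{\theta^\star}$, or Assumption~\ref{ass:lipschitz-time}, which is not a hypothesis here; and moment bounds for $Y$. That route can at most give a weaker statement in which the extra term is charged to the discretization error. This is what the remark after Assumption~\ref{ass:score-function-approximation} has in mind at the level of Theorem~\ref{thm:poisson}. So drop the ``either way'': the proposition as stated follows only under your fallback. That is, the score-error bound must also be assumed at the midpoint evaluations $\tilde Y_{t_{n,j}}$, which is the hypothesis the paper's proof tacitly uses.
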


\begin{proof}

First, we introduce a new auxiliary process as, for any $ n \in [N]_0, 
\, i \in [k-1]_0$,
\begin{equation}
\widehat{Y}_{t_{n,i}}
=
{Y}_{t_n}
+
\tfrac{ih}{k}
b_{t_n}({Y}_{t_n})
+
\sum_{j=0}^{i-1}
\sqrt{\tfrac{2h}{k}} 
Z_{t_{n,j}}.
\end{equation}
Next, we recast the second term of \eqref{eq:poisson-method} as
\begin{align}
\label{eq:Y-rewritten-score-error}
Y_{t_{n+1}}
& =
Y_{t_n}
+
h
\big(
1
-
\sum_{j=0}^{k-1}
B_{t_{n,j}}
\big)
\big(
-Y_{t_n}
+
2\tilde{s}_{\theta^\star}(T-t_n,Y_{t_n})
\big)\nonumber\\
& \quad +
h
\sum_{j=0}^{k-1}
B_{t_{n,j}}
\Big(
-\tilde{Y}_{t_{n,j}}
+
2\tilde{s}_{\theta^\star}(T-t_{n,j},\tilde{Y}_{t_{n,j}})
\Big)
+
\sum_{j=0}^{k-1}
\sqrt{\tfrac{2h}{k}}
Z_{t_{n,j}}
\nonumber\\
& =
Y_{t_n}
+
h
\big(
1
-
\sum_{j=0}^{k-1}
B_{t_{n,j}}
\big)
b_{t_n}(Y_{t_n})
+
h
\sum_{j=0}^{k-1}
B_{t_{n,j}}
b_{t_{n,j}}
(\widehat{Y}_{t_{n,j}})
+
\sum_{j=0}^{k-1}
\sqrt{\tfrac{2h}{k}}
Z_{t_{n,j}}
\nonumber\\
& \quad +
h
\big(
1
-
\sum_{j=0}^{k-1}
B_{t_{n,j}}
\big)
\Big(
2\tilde{s}_{\theta^\star}(T-t_{n},{Y}_{t_n})
-
2 \nabla \log \tilde{p}_{T-t_n}(Y_{t_n})
\Big)\nonumber\\
& \quad +
h
\sum_{j=0}^{k-1}
B_{t_{n,j}}
\Big(
-\tilde{Y}_{t_{n,j}}
+
2\tilde{s}_{\theta^\star}(T-t_{n,j},\tilde{Y}_{t_{n,j}})
-
b_{t_{n,j}}
(\widehat{Y}_{t_{n,j}})
\Big).
\end{align}
For simplicity, we denote 
\[
\mathcal{E}_{t_n}
:=
X^\infty_{t_{n}}
-
Y_{t_{n}},
\quad 
 n \in [N]_0.
\]
Subtracting \eqref{eq:poisson-x_0_pi-em} from \eqref{eq:Y-rewritten-score-error}, taking the norm of both sides of the inequality, and using the triangle inequality yield
\begin{align}
\big\|
\mathcal{E}_{t_{n+1}}
\big\|_{L^2(\Omega;\mathbb{R}^d)}
& =
\Big\|
\mathcal{E}_{t_n}
+
h
\big(
1
-
\sum_{j=0}^{k-1}
B_{t_{n,j}}
\big)
\Big(
b_{t_n}
(X^\infty_{t_n})
-
b_{t_n}
(Y_{t_n})
\Big)\nonumber\\
& \quad +
h
\sum_{j=0}^{k-1}
B_{t_{n,j}}
\Big(
b_{t_{n,j}}
(\tilde{X}_{t_{n,j}}^\infty)
-
b_{t_{n,j}}
(\widehat{Y}_{t_{n,j}})
\Big)
\Big\|_{L^2(\Omega;\mathbb{R}^d)}
\nonumber\\
& \quad +
\Big\|
h
\big(
1
-
\sum_{j=0}^{k-1}
B_{t_{n,j}}
\big)
\Big(
2\tilde{s}_{\theta^\star}(T-t_n,{Y}_{t_n})
-
2 \nabla \log \tilde{p}_{T-t_n}(Y_{t_n})
\Big)
\Big\|_{L^2(\Omega;\mathbb{R}^d)}
\nonumber\\
& \quad +
\Big\|
h
\sum_{j=0}^{k-1}
B_{t_{n,j}}
\Big(
-\tilde{Y}_{t_{n,j}}
+
2\tilde{s}_{\theta^\star}(T-t_{n,j},\tilde{Y}_{t_{n,j}})
-
b_{t_{n,j}}
(\widehat{Y}_{t_{n,j}})
\Big)
\Big\|_{L^2(\Omega;\mathbb{R}^d)}
\nonumber\\
& =:
A_{1}+A_2+A_3.
\end{align}
For the estimate of $A_2$, using Assumption \ref{ass:score-function-approximation}, one gets
{\color{black}
\[
\begin{aligned}
A_2^2
&\leq
4h^2
\E
\Big[
\big(
1-\sum_{j=0}^{k-1}B_{t_{n,j}}
\big)^2
\Big]
\epsilon_0^2
\leq
4h^2\epsilon_0^2,
\end{aligned}
\]
and hence \(A_2\leq 2h\epsilon_0\).
}
In the following, we cope with $A_3$.
Applying the triangle inequality, the Lipschitz condition of $b_t(\cdot)$ and Assumption \ref{ass:score-function-approximation} yields 
\begin{align}
A_3^2
& \leq
2h^2 
\,
\E
\bigg[
\Big\|
\sum_{j=0}^{k-1}
B_{t_{n,j}}
\big(
-\tilde{Y}_{t_{n,j}}
+
2\tilde{s}_{\theta^\star}(T-t_{n,j},\tilde{Y}_{t_{n,j}})
-
b_{t_{n,j}}
(\tilde{Y}_{t_{n,j}})
\big)
\Big\|^2
\bigg]
\nonumber\\
& \quad +
2h^2 
\,
\E
\bigg[
\Big\|
\sum_{j=0}^{k-1}
B_{t_{n,j}}
\big(
b_{t_{n,j}}
(\tilde{Y}_{t_{n,j}})
-
b_{t_{n,j}}
(\widehat{Y}_{t_{n,j}})
\big)
\Big\|^2
\bigg]\nonumber\\
& {\color{black}\leq} 
{\color{black}4 h^2}
\sup_{0\leq j\leq k-1}
\E
\Big[
\big\|
2\tilde{s}_{\theta^\star}(T-t_{n,j},\tilde{Y}_{t_{n,j}})
-
2 {\color{black}\nabla \log \tilde{p}_{T-t_{n,j}}(\tilde{Y}_{t_{n,j}})}
\big\|^2
\Big]\nonumber\\
& \quad +
{\color{black}4 (2L+1)^2 h^2}
\sup_{0\leq j\leq k-1}
\E
\Big[
\big\|
\tilde{Y}_{t_{n,j}}
-
\widehat{Y}_{t_{n,j}}
\big\|^2
\Big]\nonumber\\
& {\color{black}\leq}
{\color{black}16}
\big(
1+(2L+1)^2\big)
h^2 \epsilon_0^2,
\end{align}
{\color{black}
where we use the elementary estimate
\[
\mathbb{E}
\Big[
\big\|\sum_{j=0}^{k-1}B_{t_{n,j}}V_j\big\|^2
\Big]
\leq
2\sup_{0\leq j\leq k-1}
\mathbb{E}\big[\|V_j\|^2\big].
\]
}
Following the same argument as in Proposition \ref{prop:pi-X_T}, we get 
\begin{equation}
\|\mathcal{E}_{t_{n+1}}\|_{L^2(\Omega;\mathbb{R}^d)}
\leq 
\zeta_{t_n} 
\|\mathcal{E}_{t_n}\|_{L^2(\Omega;\mathbb{R}^d)}
+
\big(
{\color{black}2+4\sqrt{1+(2L+1)^2}}
\big)
h
\epsilon_0,
\end{equation}
where 
\begin{equation}
\begin{aligned}
\zeta_{t_n}
:=
\begin{cases}
\Big(1+
\big(C(L,\alpha) h-\min_{0\leq j\leq k-1} 
(2 C_{T-{t_{n,j}}}+1)
\big)h
\Big)^{1/2},
& \text { for } (n+1)k \leq N_h 
\\[5pt]
1+
\tilde{C}(L)h,
& \text { for } (n+1)k > N_h.
\end{cases}
\end{aligned}
\end{equation}
Recalling that we already know $\zeta_{t_n}\leq 1$ for $(n+1)k\leq N_h$,
we obtain by iteration that
\begin{equation}
\begin{aligned}
\big\|
\mathcal{E}_{t_N}
\big\|_{L^2(\Omega;\mathbb{R}^d)}
& \leq 
\prod_{\ell=0}^{N-1}
\zeta_{t_\ell} 
\big\|
\mathcal{E}_{0}
\big\|_{L^2(\Omega;\mathbb{R}^d)}
+
\Big(
\sum_{m=1}^{N-1}
\prod_{\ell=m}^{N-1}
\zeta_{t_\ell}
+
1\Big)
\Big(
2+2\sqrt{2(2L+1)^2+1}
\Big)
h
\epsilon_0,
\end{aligned}
\end{equation}
Now, by virtue of \eqref{eq:recusion} and the fact that $X_0^\infty-Y_0=0$, we get 
\begin{equation}
\mathcal{W}_2
\big(
\mathcal{L}(X^{\infty}_{t_N}),
\mathcal{L}(Y_{t_N})
\big)
\leq 
C
\left(
\hat{T}(\alpha,M,0)
+
\tfrac{1}
{\tilde{C}(L)}
\right) 
\mathrm{e}^{\tilde{C}(L)
\eta(\alpha, M,\rho^{(4)}_h)}
\epsilon_0.
\end{equation}
The proof of Proposition \ref{prop:score-error}
is finished.
\end{proof}

\subsection{Score matching error of EM method}
\begin{prop}
[Score matching error of EM method]
\label{prop:em-score-error}
Suppose that Assumptions \ref{ass:weak-convex}, \ref{ass:one-side-lipschitz} and \ref{ass:score-function-approximation} hold. 
For any $n \in [N]_0$ and 
let $\{X^\infty_{t_{n}}\}$ and $\{Y_{t_{n}}\}$ be produced by \eqref{eq:em-aux} and \eqref{eq:EM} using the uniform step-size 
\[
h \leq 
\tfrac{2}{(2L+1)^2}.
\]
Then there exists a constant $C$, independent of $d$ and $h$, such that,
\begin{equation}
\mathcal{W}_2
\big(
\mathcal{L}(X^{\infty}_{t_N}),
\mathcal{L}(Y_{t_N})
\big)
\leq 
C 
\mathrm{e}^{C
\eta(\alpha, M,(2L+1)^2h/2)}
T
\epsilon_0,
\end{equation}
with $\eta(\alpha, M,(2L+1)^2h/2)$ as defined in \eqref{eq:eta-def}, where $L$ comes from Lemma \ref{lem:score-lipschitz}.
\end{prop}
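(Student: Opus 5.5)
The plan is to follow the proof of Proposition~\ref{prop:score-error} with $k=1$, where the Bernoulli correction disappears and the argument becomes a one-line recursion. Set $\mathcal{E}_{t_n}:=X^\infty_{t_n}-Y_{t_n}$ and couple $X^\infty$ and $Y$ synchronously: they share the initial draw $X^\infty_0=Y_0\sim\gamma^d$ and the Brownian increments $\Delta W_{t_n}$. Since $b_{t}(x)=-x+2\nabla\log\tilde p_{T-t}(x)$, the EM scheme \eqref{eq:EM} can be rewritten as
\[
Y_{t_{n+1}}=Y_{t_n}+h\,b_{t_n}(Y_{t_n})+2h\big(\tilde s_{\theta^\star}(T-t_n,Y_{t_n})-\nabla\log\tilde p_{T-t_n}(Y_{t_n})\big)+\sqrt2\,\Delta W_{t_n}.
\]
Subtracting this from \eqref{eq:em-aux} cancels the noise. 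Taking $L^2(\Omega;\mathbb{R}^d)$ norms and using the triangle inequality gives
\[
\|\mathcal{E}_{t_{n+1}}\|_{L^2}\le \big\|\mathcal{E}_{t_n}+h\big(b_{t_n}(X^\infty_{t_n})-b_{t_n}(Y_{t_n})\big)\big\|_{L^2}+2h\epsilon_0,
\]
where the last term is controlled by Assumption~\ref{ass:score-function-approximation}.

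Next I bound the first term on the right. Squaring it, the quadratic term is at most $(2L+1)^2h^2\|\mathcal{E}_{t_n}\|_{L^2}^2$ by Lemma~\ref{lem:score-lipschitz}. The cross term is controlled by Proposition~\ref{prop:regime-shift-drift}, taken with $\rho=\rho_h:=(2L+1)^2h/2$. This yields the factor
\[
\zeta_{t_n}^2:=1+\big((2L+1)^2h-2(2C_{T-t_n}+1)\big)h \quad\text{for } n\le N_h,
\qquad
\zeta_{t_n}^2:=1+(2L+1)(2L+3)h \quad\text{for } n>N_h.
\]
These are exactly the factors $\Gamma_{t_n}$ from Proposition~\ref{prop:em-pi-X_T}. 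For $n\le N_h$ we have $2C_{T-t_n}+1\ge\rho_h$ by definition of $\widehat T$, so $\zeta_{t_n}^2\le 1+((2L+1)^2h-2\rho_h)h=1$. The step-size restriction $h\le 2/(2L+1)^2$ makes $\rho_h\le1$, so the threshold is well defined. This gives the recursion $\|\mathcal{E}_{t_{n+1}}\|\le\zeta_{t_n}\|\mathcal{E}_{t_n}\|+2h\epsilon_0$.

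Finally I iterate the recursion from $\mathcal{E}_0=0$, which gives
\[
\|\mathcal{E}_{t_N}\|\le 2h\epsilon_0\Big(\sum_{m=1}^{N-1}\prod_{\ell=m}^{N-1}\zeta_{t_\ell}+1\Big).
\]
The sum is handled as in \eqref{eq:recusion}. In the contractive regime the products are at most $1$, and there are at most $N_h\le T/h$ such terms. In the non-contractive regime, which has length at most $\eta(\alpha,M,\rho_h)+h$, we use $\zeta\le e^{Ch}$. Together these give $h(\cdots)\le C(T+1)e^{C\eta(\alpha,M,\rho_h)}$. Then $\mathcal{W}_2\le\|\mathcal{E}_{t_N}\|_{L^2}$ gives the claim, with $T+1\le CT$ absorbed into the constant for $T\ge1$. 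The only point needing care is the index bookkeeping for $N_h$ with $k=1$, including the transition step where $n\le N_h$ but $t_{n+1}>\widehat T$. That step costs at most a factor $1+Ch$ and is absorbed in the constant.
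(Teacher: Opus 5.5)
Your proposal is correct and follows essentially the same route as the paper: a synchronous coupling from $\mathcal{E}_0=0$, splitting off the score-error term $2h\epsilon_0$ by the triangle inequality in $L^2(\Omega;\mathbb{R}^d)$, reusing the contraction factors $\Gamma_{t_n}$ of Proposition~\ref{prop:em-pi-X_T} with $\rho_h=(2L+1)^2h/2$, and bounding the iterated sum as in \eqref{eq:recusion}. The transition step you flag needs no extra care for EM. Each step only evaluates $b_{t_n}$ at the left endpoint, so the factor at step $n$ depends only on whether $t_n\le\widehat T(\alpha,M,\rho_h)$.
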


\begin{proof}
By employing the same notation and arguments as those used in the proof of Proposition \ref{prop:score-error}, along with the triangle inequality, one derives
\begin{equation}
\begin{aligned}
\big\|
\mathcal{E}_{t_{n+1}}
\big\|_{L^2(\Omega;\mathbb{R}^d)}
& =
\underbrace{
\big\|
\mathcal{E}_{t_{n}}
+
h
\big(
b_{t_n}
(X^\infty_{t_n})
-
b_{t_n}
(Y_{t_n})
\big)
\Big\|_{L^2(\Omega;\mathbb{R}^d)}
}_{=:\mathcal{A}_1}\\
& \quad +
\underbrace{
\Big\|
h
\big(
2\tilde{s}_{\theta^\star}(T-t_n,{Y}_{t_n})
-
2 \nabla \log \tilde{p}_{T-t_n}
(Y_{t_n})
\big)
\Big\|_{L^2(\Omega;\mathbb{R}^d)}
}_{=:\mathcal{A}_2}.
\end{aligned}
\end{equation}
By Assumption \ref{ass:score-function-approximation}, we have 
\[
\mathcal{A}_2 
\leq 
2h\epsilon_0.
\]
With regard to $\mathcal{A}_1$, by revisiting the proof of Proposition \ref{prop:em-pi-X_T} and invoking the above estimation together with the fact that $X^\infty_{0} = Y_{0} \sim \gamma^d$, we deduce
\begin{equation}
\begin{aligned}
\big\|
\mathcal{E}_{t_{N}}
\big\|_{L^2(\Omega;\mathbb{R}^d)}
& \leq 
\prod_{\ell=0}^{N-1}
\zeta_{t_\ell} 
\|\mathcal{E}_{0}\|_{L^2(\Omega;\mathbb{R}^d)}
+
\Big(
\sum_{m=1}^{N-1}
\prod_{\ell=m}^{N-1}
\zeta 
_{t_\ell}
+
1\Big)
2
h
\epsilon_0\\
& \leq 
\Big(
\sum_{m=1}^{N-1}
\prod_{\ell=m}^{N-1}
\zeta 
_{t_\ell}
+
1\Big)
2
h
\epsilon_0 \\
& \leq 
C
\left(
\hat{T}(\alpha,M,0)
+
\tfrac{1}
{(2L+1)}
\right) 
\mathrm{e}^{(2L+1)
\eta(\alpha, M,\rho^{(5)}_h)}
\epsilon_0,
\end{aligned}
\end{equation}
with 
\begin{equation}
\begin{aligned}
\zeta_{t_n}
:=
\begin{cases}
\big(1+
\big((2L+1)^2h-2(2C_{T-t_n}+1)\big)
h \big)^{1/2},
& \text { for } n \leq N_h 
\\[5pt]
1+
(2L+1) h,
& \text { for } n > N_h.
\end{cases}
\end{aligned}
\end{equation}
The proof is completed.
\end{proof}

{\color{black}
\section{Proof of Proposition \ref{prop:time-lipschitz-gaussian-mixture}}
\label{app:proof-lipschitz-t}
\begin{proof}
The explicit solution of the OU process \eqref{eq:forward-SDE} gives
\[
\overrightarrow{X}_t
\overset{\mathrm{d}}{=}
\mathrm{e}^{-t}\overrightarrow{X}_0
+
\sqrt{1-\mathrm{e}^{-2t}}\,Z,
\quad 
Z \sim \gamma^d,
\]
where $Z$ is independent of $\overrightarrow{X}_0$. 
For the sake of notation, we denote
\[
a_r:=e^{-r},
\qquad
v_r
:=
1+e^{-2r}
(\sigma^2-1)
\geq  
v
:=\min\{1,\sigma^2\}>0 .
\]
Then, we have that the density distribution of this law is
\begin{equation}
\begin{aligned}
p_r(x)
& =
\sum_{i=1}^{m}
\beta_i
\frac{1}{(2\pi v_r)^{d/2}}
\exp\left(
-\frac{\|x-a_r\mu_i\|^2}{2v_r}
\right).
\end{aligned}
\end{equation}
Consequently, the score function of $p_r$ is given by
\begin{equation}
\begin{aligned}
\nabla\log p_r(x)
& =
-\frac{x}{v_r}
+
\frac{a_r}{v_r}
\frac{
\sum_{i=1}^m
\beta_i 
\mu_i
\exp
\left(
-\frac{\|x-a_r\mu_i\|^2}{2v_r}
\right)
}
{
\sum_{i=1}^m
\beta_i
\exp\left(
-\frac{\|x-a_r\mu_i\|^2}{2v_r}
\right)
}\\
& =
-\frac{x}{v_r}
+
\frac{a_r}{v_r}
\frac{
\sum_{i=1}^m
\beta_i 
\mu_i
\exp\left(
-\frac{a_r^2\|\mu_i\|^2}{2 v_r}
+
\frac{a_r \langle\mu_i,x\rangle}{v_r}
\right)
}{
\sum_{i=1}^m
\beta_i
\exp\left(
-\frac{a_r^2\|\mu_i\|^2}{2 v_r}
+
\frac{a_r \langle\mu_i,x\rangle}{v_r}
\right)}\\
& =:
-\frac{x}{v_r}
+
\frac{a_r}{v_r}
\frac{
\sum_{i=1}^m
\beta_i 
\mu_i \psi_i(r,x) }
{\sum_{i=1}^m
\beta_i \psi_i(r,x) },
\end{aligned}
\end{equation}
where 
\[
\psi_i(r,x)
=
\exp 
\left(
\tfrac{a_r}{v_r}
\langle\mu_i,x\rangle
-
\tfrac{a_r^2}{2v_r}
\|\mu_i\|^2 
\right).
\]
Define
\[
\omega_i(r,x)
:=
\frac{\beta_i\psi_i(r,x)}
{\sum_{j=1}^m\beta_j\psi_j(r,x)},
\qquad
\overline{\mu}_r(x)
:=
\sum_{i=1}^m\omega_i(r,x)\mu_i.
\]
As a consequence,
\[
\nabla\log p_r(x)
=
-\frac{x}{v_r}
+
\frac{a_r}{v_r}
\overline{\mu}_r(x).
\]
It follows that
\begin{equation}
\begin{aligned}
\partial_r\overline{\mu}_r(x)
&=
\sum_{i=1}^m
\mu_i\partial_r\omega_i(r,x)
=
\sum_{i=1}^m
\omega_i(r,x)
\bigl(
\mu_i-\overline{\mu}_r(x)
\bigr)
\partial_r\ell_i(r,x),
\end{aligned}
\end{equation}
where 
\[
\ell_i(r,x)
:=
\log\psi_i(r,x)
=
\tfrac{a_r}{v_r}
\langle\mu_i,x\rangle
-
\tfrac{a_r^2}{2v_r}
\|\mu_i\|^2.
\]
Before proceeding further, we give some estimate. First, we have 
\begin{align*}
\Big|
\big(\tfrac{a_r}{v_r}\big)'
\Big|
=
\Big|
\tfrac{a_r'}{v_r}
-
\tfrac{a_r v_r'}{v_r^2}
\Big|
\leq 
\tfrac{1}{v}
+
\tfrac{2|\sigma^2-1|}{v^2}
=: C_1, 
\end{align*}
and
\begin{align*}
\Big|
\big(\tfrac{a_r^2}{2v_r}\big)'
\Big|
&=
\left|
\tfrac{a_ra_r'}{v_r}
-
\tfrac{a_r^2v_r'}{2v_r^2}
\right|
\le
\tfrac{1}{v}
+
\tfrac{|\sigma^2-1|}{v^2}
=:C_2.
\end{align*}
Define
$
R:=\max_{1\leq i\leq m}\|\mu_i\|,
$
which is independent of $d$. By using the Cauchy-Schwarz inequality and the above estimates, we can derive 
\[
\big|
\partial_r\ell_i(r,x)
\big|
=
\Big|
\big(\tfrac{a_r}{v_r}\big)'
\langle\mu_i,x\rangle
-
\big(\tfrac{a_r^2}{2v_r}\big)'\|\mu_i\|^2
\Big|
\le
C_1 R\|x\|
+
C_2R^2.
\]
Since \(\overline\mu_r(x)\) is a convex combination of the vectors
\(\mu_i\), we have
$
\|\overline\mu_r(x)\|\leq R.
$
Using
$
\|\mu_i-\overline{\mu}_r(x)\|
\le
\|\mu_i\|+\|\overline{\mu}_r(x)\|
\le 2R
$, 
we obtain
\[
\begin{aligned}
\|\partial_r\overline{\mu}_r(x)\|
&\le
\sum_{i=1}^m
\omega_i(r,x)
\big\|
\mu_i-\overline{\mu}_r(x)
\big\|
\cdot
|\partial_r\ell_i(r,x)|
\\
&\le
2C_1R^2\|x\|
+
2C_2R^3.
\end{aligned}
\]
All together, we obtain
\begin{equation}
\begin{aligned}
\big\|
\partial_r\nabla\log p_r(x)
\big\|
&\le
\frac{\|v_r' x\|}{v_r^2}
+
\Big|
\big(\tfrac{a_r}{v_r}\big)'
\Big|
\cdot
\big\|\overline{\mu}_r(x)
\big\|
+
\big|
\tfrac{a_r}{v_r}
\big|
\cdot
\big\|
\partial_r\overline{\mu}_r(x)
\big\|
\\
&\le
\tilde L(1+\|x\|),
\end{aligned}
\end{equation}
where 
$\tilde{L}
:= 
\max\left\{
\tfrac{2|\sigma^2-1|}{v^2}
+
\tfrac{2C_1R^2}{v},
\,
C_1R
+
\tfrac{2C_2R^3}{v}
\right\},$ is independent of $d$.
Therefore, for any $t_n\leq t \leq s\leq t_{n+1}$, 
\begin{equation}
\begin{aligned}
\big\|
\nabla\log p_{T-t}(x)
-
\nabla\log p_{T-s}(x)
\big\|
&\le
\int_{T-s}^{T-t}
\big\|
\partial_r\nabla\log p_r(x)
\big\|
\,\mathrm dr
\\
&\le
\tilde L h(1+\|x\|).
\end{aligned}
\end{equation}
This concludes the proof.
\end{proof}

\section{Proof of Corollary \ref{cor:PM}}
\label{app:proof-mixting-time}
\begin{proof}
Given an error tolerance $\epsilon>0$, Theorem \ref{thm:poisson} tells that, for $T$ being large enough, $h$ being small enough and $\epsilon_0$ being small enough such that
\begin{equation}
\label{appendix-eq:mixing-time-three-term}
\begin{aligned}
    T^3\big(
    \tfrac{\sqrt{d}h}{k}
    +
    k d h^2
    +
    dh^2
    +
    \sqrt{d}h^{3/2}
    \big)
    \leq 
    \tfrac{\epsilon}{3},
    \quad 
    \mathrm{e}^{-T}
    \mathcal{W}_2
    (\mu,
    \gamma^d)
    \leq 
    \tfrac{\epsilon}{3},
    \quad
    \text{and }
    \quad
    \epsilon_0 T
    \leq 
    \tfrac{\epsilon}{3},
\end{aligned}
\end{equation}
one can arrive at
\[
\mathcal{W}_2
    \left(\mu, \mathcal{L}\left(Y_{t_N}\right)\right)
\leq \epsilon.
\]
According to Lemma \ref{lem:target-p-order-moment}, we have $\mathcal W_2(\mu,\gamma^d)\le\sqrt{(\mathcal M+1)d}$. 
Then, the second and third part of inequality \eqref{appendix-eq:mixing-time-three-term} requires 
\[
T \geq \log\big(\tfrac{3\sqrt{(\mathcal M+1)d}}{\epsilon})\big),
\quad 
\epsilon_0 \leq 
\tfrac{\epsilon}{3T}
\le 
\tfrac{\epsilon}{3\log\big(3\epsilon^{-1}\sqrt{(\mathcal M+1)d})\big)}.
\]
Next, we deal with the first term. We first choose the batch size $k$ to be of order $\tilde{\mathcal{O}}(d^{1/2} h/\epsilon)$ to ensure that
\[
C d^{1/2}\,\tfrac{h}{k}
\leq 
\tfrac{\epsilon}{12}.
\]
Then, based on the choice of $k$, one has 
\begin{equation}
\label{eq:mixing-time-h}
    C d^{3/2} h^{3}
    \leq 
    \tfrac{\epsilon^{2}}{12}, 
    \quad 
    C
    d h^{2}
    \leq 
    \tfrac{\epsilon}{12}, 
    \quad
    \text{and}
    \quad
    C
    \sqrt d h^{3/2}
    \leq 
    \tfrac{\epsilon}{12},
\end{equation}
which ensure that the first inequality in \eqref{appendix-eq:mixing-time-three-term} holds. Rearranging \eqref{eq:mixing-time-h} gives
\[
h
\leq 
C
\min\!\left\{
\epsilon^{2/3}/d^{1/2},
\epsilon^{1/2}/d^{1/2},
\epsilon^{2/3}/d^{1/3}
\right\}. 
\]
This completes the proof.
\end{proof}
}

\end{document}